\declaretheoremstyle[notefont=\bfseries,notebraces={}{},%
    headpunct={},postheadspace=1em]{mystyle}
\declaretheorem[style=mystyle,numbered=no,name=Problem]{prob-hand}
\def\dispace{\setlength{\itemsep}{2pt}}
\newcommand{\eq}[1]{Eq. (#1)}
\newcommand{\etype}[1]{\renewcommand{\labelenumi}{(#1{enumi})}}
\def\eroman{\etype{\roman}}
\def\pSkip{\vskip 1.5mm \noindent}
\newcommand{\ds}[1]{\ {#1} \ }
\newcommand{\dss}[1]{\quad {#1} \quad }
\def\sm{\setminus}
\def\00{ \{ 0 \}}
\newcommand{\cass}[2]{{#1}{#2}}
\def\|{\ds |}
\def\tL{\mathcal L}
\def\veps{\varepsilon}
\def\vrp{\varphi}
\def\zt{\zeta}
\def\R{\mathbb R}
\def\rk{\operatorname{rk}}
\def\an{\operatorname{an}}
\def\CS{\operatorname{CS}}
\def\X1{X_1}
\def\Y1{Y_1}
\def\map{\phi}
\def\elm{\xi}
\def\nucong{\cong_\nu}
\def\nug{>_\nu}
\def\nul{<_\nu}
\def\nuge{\geq_\nu}
\def\nule{\leq_\nu}
\def\tlb{\tilde b}
\def\tlq{\tilde q}
\def\tT{\mathcal T}
\def\tG{\mathcal G}
\def\N{\mathbb N}
\def\QL{\operatorname{QL}}
\newcommand{\Char}{\operatorname{char}}
\def\an{\operatorname{an}}
\def\CS{\operatorname{CS}}
\def\X1{X_1}
\def\Y1{Y_1}
\def\eps{\varepsilon}
\def\al{\alpha}
\def\bt{\beta}
\def\gm{\gamma}
\def\sig{\sigma}
\newtheorem{thm}{Theorem} [section]
\newtheorem*{thm*}{Theorem}
\newtheorem{cor}[thm]{Corollary}
\newtheorem{lemma}[thm]{Lemma}
\newtheorem{prop}[thm]{Proposition}
\newtheorem{convention}[thm]{Convention}
\newtheorem*{claim*} {Claim}
\newtheorem*{theorem4.5'} {Theorem 4.5$'$}
\newtheorem{acknowledgment*}[thm] {Acknowledgment}
\newtheorem{examp}[thm]{Example}
\newtheorem{examples}[thm]{Examples}
\newtheorem*{exampleA*}{Example A}
\newtheorem*{exampleB*}{Example B}
 \newtheorem{remark}[thm]{Remark}
 \newtheorem*{remark*}{Remark}
 \newtheorem{defn}[thm]{Definition}
\newtheorem{notation}[thm]{Notation}
\newtheorem{notations}[thm]{Notations}
\newtheorem{problem}[thm]{Problem}
\newtheorem*{notation*} {Notation}
\newtheorem*{comment*} {Comment}
\newtheorem{comment}[thm] {Comment}
\newcommand{\thmref}[1]{Theorem~\ref{#1}}
\newcommand{\propref}[1]{Proposition~\ref{#1}}
\newcommand{\lemref}[1]{Lemma~\ref{#1}}
\newcommand{\corref}[1]{Corollary~\ref{#1}}
\newcommand{\Qthmref}[1]{\cite[Theorem~{#1}]{QF1}}
\newcommand{\Qpropref}[1]{\cite[Proposition~{#1}]{QF1}}
\newcommand{\Qsecref}[1]{\cite[~\S{#1}]{QF1}}
\newcommand{\Qdefref}[1]{\cite[Definition~{#1}]{QF1}}
 \renewcommand{\sectionmark}[1]{}
\newcommand{\bfem}[1]{\textbf{#1}}
\newcommand{\lm}{\lambda}
 \newcommand{\supp} {\operatorname{supp}}
\begin{document}

\title[Supertropical  Quadratic Forms II]
{Supertropical  Quadratic Forms II}
\author[Z. Izhakian]{Zur Izhakian}
\address{Institute  of Mathematics,
 University of Aberdeen, AB24 3UE,
Aberdeen,  UK.}
    \email{zzur@abdn.ac.uk; zzur@math.biu.ac.il}

\author[M. Knebusch]{Manfred Knebusch}
\address{Department of Mathematics,
NWF-I Mathematik, Universit\"at Regensburg 93040 Regensburg,
Germany} \email{manfred.knebusch@mathematik.uni-regensburg.de}
\author[L. Rowen]{Louis Rowen}
 \address{Department of Mathematics,
 Bar-Ilan University,  Ramat-Gan 52900, Israel}
 \email{rowen@math.biu.ac.il}





\subjclass[2010]{Primary 15A03, 15A09, 15A15, 16Y60; Secondary
14T05, 15A33, 20M18, 51M20}

\date{\today}


\keywords{Tropical algebra, supertropical modules, bilinear forms,
quadratic forms,  quadratic pairs, supertropicalization.}




\begin{abstract} This article is a sequel of \cite{QF1}, where we introduced quadratic forms on a module~ $V$ over a supertropical semiring $R$ and analysed the set of bilinear companions of   a quadratic form $q: V \to R$ in case that the module~$V$ is free, with fairly complete results if~ $R$ is a supersemifield. Given such a companion $b$ we now classify the pairs of vectors in $V$ in terms of $(q,b).$ This amounts to a kind of tropical trigonometry with a sharp distinction between the cases that a sort of Cauchy-Schwarz
inequality holds or fails. We apply this to study the supertropicalizations (cf.  \cite{QF1})  of a quadratic form on a free module~$X$ over a field in the simplest cases of interest where $\rk(X) = 2$.

In the last part of the paper we start exploiting  the fact that the free module~$V$ as  above has a unique base up to permutations and multiplication by units of~$R$, and moreover~$V$ carries a so called minimal (partial) ordering. Under mild restriction on~$R$ we determine all $q$-minimal vectors in $V$, i.e., the vectors $x \in V$ for which  $q(x') < q(x)$ whenever $x' < x.$
\end{abstract}

\maketitle

{\small \tableofcontents}

\numberwithin{equation}{section}

\section*{Introduction}

Let $R$ be a semiring, here always assumed to be commutative and
with 1. A \bfem{quadratic form} on an $R$-module $V$ is a function
$q: V\to R$ with
\begin{equation}\label{eq:0.1}
q(ax)=a^2q(x)
\end{equation}
for any $a\in R$, $x\in V,$ such that there exists a symmetric
bilinear form $b:V\times V\to R$ (not necessarily uniquely
determined by $q$) with
\begin{equation}\label{eq:0.2}
q(x+y)= q(x)+q(y)+b(x,y)
\end{equation}
for any $x,y\in V.$ Every such bilinear form $b$ is called a
\bfem{companion} of $q,$ and the pair $(q,b)$ is called a
\bfem{quadratic pair} on $V.$

The present paper is devoted to a study of quadratic forms and
pairs on $R$-modules with~$R$ a ``supertropical'' semiring,
often more specifically a ``supersemifield''. It is a sequel to the paper
\cite{QF1} by the same authors.

We recall (\Qdefref{0.3} and \cite[\S3]{IKR1}), that a semiring $R$ is
called \bfem{supertropical} if $e:=1_R+1_R$ is an idempotent
(i.e., $2\times 1=4\times 1)$, and the following axioms hold for
all~ $x,y\in R:$
\begin{align}
\text{If}\ ex\ne ey, & \ \text{ then} \ x+y\in\{x,y\}, \label{eq:0.3} \\
\text{If}\ ex= ey,& \ \text{ then} \ x+y=ey. \label{eq:0.4}
\end{align}
Then the ideal $eR$ of $R$ is a semiring with unit element $e,$
which is \bfem{bipotent}, i.e., for any $u,v\in eR$ the sum $u+v$
is either $u$ or $v.$ It follows that $eR$ carries a total
ordering, compatible with addition and multiplication, which is
given by
\begin{equation}\label{eq:0.5}
u\le v \ds \Leftrightarrow u+v=v.
\end{equation}
The addition in a supertropical semiring is determined by the map
$x\mapsto ex$ and the total ordering on $eR$ as follows: If
$x,y\in R,$ then
\begin{equation}\label{eq:0.6}
x+y=\begin{cases} y&\ \text{if}\ ex<ey,\\
x&\ \text{if}\ ex>ey,\\
ey&\ \text{if}\ ex=ey.
\end{cases}
\end{equation}
In particular (taking  $y=0$ in \eqref{eq:0.6} or in \eqref{eq:0.4})
\begin{equation}\label{eq:0.7}
ex=0 \ds \Rightarrow x=0.
\end{equation}

For the convenience of the reader, we give more terminology. In a
supertropical semiring~$R,$ the elements of the set
$\tT(R):=R\sm(eR)$ are called \bfem{tangible}, while those of
the set $\tG(R):=(eR)\sm\{0\}$ are called
\bfem{ghost} elements. The zero of $R$ is regarded both as
tangible and ghost. The semiring $R$ itself is called \bfem{tangible} if $R$ is
generated by $\tT(R)$ as a semiring. Clearly, this happens iff
$e\tT(R)=\tG(R).$ If $\tT(R)\ne\emptyset,$ then the set
$$R':=\tT(R)\cup e\tT(R)\cup\{0\}$$
is the largest subsemiring of $R$ which is tangible supertropical.
\{We have discarded the ``superfluous'' ghost elements.\}

In the paper \cite{QF1}, the main thrust is the study of the set of
all companions of a given quadratic form $q$ on a free module $V$
over a supertropical semiring $R.$ After fixing a base
$(\varepsilon_i\|i\in I)$ of $V$, this set can be described by use
of a ``companion matrix'' ($C_{i,j}(q))$, cf.~ \Qsecref{6}.  For
$R$ a tangible semifield, complete results can be found in
\Qsecref{7}. Explicitly, these hard results are needed in the
present paper only in the proof of the initial key Theorem
\ref{thm:II.5.5}, which for a first reading may be taken on
faith.

The quadratic form $q$ is called \bfem{rigid}, if $q$ has only one
companion. This happens iff $q(\veps_i)=0$ for all vectors
$\veps_i$ of the fixed base $(\veps_i\|i\in I),$ cf.
\Qthmref{3.5}. $q$ is called \bfem{quasilinear} if the
bilinear form $b=0$ is a companion of $q,$ i.e.,
$q(x+y)=q(x)+q(y)$ for all $x,y\in V.$ These are the ``diagonal''
forms on $V,$
\begin{equation}\label{eq:0.8}
q\bigg(\sum_ix_i\veps_i\bigg)=\sum_i q(\veps_i)x_i^2,
\end{equation}
due to the fact that $(\lm+\mu)^2=\lm^2+\mu^2$ for all
$\lm,\mu\in R,$ cf. \Qpropref{0.5}.

Any quadratic form $q$ on a free $R$-module can be written as a
sum
\begin{equation}\label{eq:0.9}
q=q_{QL}+\rho,
\end{equation}
where $q_{QL}$ is a quasilinear (and uniquely determined by $q$)
and $\rho$ is rigid (but not unique), cf. \Qsecref{4}. We call
$q_{\QL}$ the \bfem{quasilinear part} of $q$ and $\rho$ a
\bfem{rigid complement} of $q_{\QL}$ in $q.$

\medskip
The present paper is divided as follows. The first three sections
are devoted to a study of pairs of non-zero vectors $(x,y)$ in  an
$R$-module $V$ equipped with a quadratic pair $(q, b),$ mostly for
$R$ a tangible semifield. Sometimes we only assume that $eR$ is a
(bipotent) semifield. We face an all important dichotomy. Either
$(x,y)$ is \textit{excessive} (cf. Definition \ref{defn:II.5.6}
below) or the restriction $q|Rx+Ry$ of $q$ is quasilinear. In the
latter case, we also say that the pair $(x,y)$ \bfem{quasilinear}
(with respect to $q)$.

An intriguing point here is that this dichotomy does not depend on
the choice of the companion $b$ of $q,$ although $b$ is used in
the definition of excessiveness (cf. \corref{cor:II.5.7}).

In Section \ref{sec:II.6}, we delve into a kind of ``\textit{tropical
trigonometry}''. If $x$ and $y$ are anisotropic, i.e., $q(x)\ne0,$
$q(y)\ne0,$ we define a CS-ratio\footnote{``CS'' is an acronym of
``Cauchy-Schwarz''.}
\begin{equation}\label{eq:0.10}
\CS(x,y):=\frac{eb(x,y)^2}{eq(x)q(y)}\in eR,
\end{equation}
which makes sense since $eR$ is a semifield.
When the set $eR$ is densely ordered, then $(x,y)$ is excessive
iff $\CS(x,y)>e.$ When $eR$ is discrete, the pair $(x,y)$ is
excessive if $\CS(x,y)>c_0,$ with $c_0$ the smallest element of
$eR$ bigger than $e.$ But if $\CS(x,y)=c_0,$ the pair $(x,y)$ is
excessive if $q(x)$ or $q(y)$ is tangible, while $(x,y)$ is
quasilinear if both $q(x)$ and $q(y)$ are ghost (cf. Theorems
\ref{thm:II.5.5} and \ref{thm:II.5.12}). It seems to us that
this still somewhat mysterious fact bears relevance for problems
of an arithmetical nature in quadratic form theory, even over
fields.

For any anisotropic vector $w$, the function $x\mapsto
\CS(x,w)$ is subadditive, cf. \thmref{thm:II.6.10}. This fact
has turned out to be of central importance in a (still incomplete)
sequel \cite{QF3} of the present paper.

In \S\ref{sec:II.7}, we compile tables of the function
$(\lm,\mu)\mapsto q(\lm x+\mu y)$ on $(R\sm\{0\})^2$
for given $x,y\in V\sm\{0\},$ and then study in detail the
CS-ratios $\CS(x',y')$ of pairs of vectors $(x',y')$ in $Rx+Ry.$
This completes our account of tropical trigonometry in the present
paper. First applications show up in the later sections, but a
more adequate language of ``rays'',\footnote{The ray of a vector
$x\in V\sm\{0\}$ is the set of all $y\in V$ with $\lm
x=\mu y$ for some $\lm,\mu\in R\sm\{0\}.$} to use this
trigonometry conveniently, has to wait for the paper \cite{QF3}
due to lack of space here.

Sections \S\ref{sec:II.8}--\S\ref{sec:7} of the paper are based on
the following two facts for $R$-modules, valid over any
supertropical  semiring $R:$
\begin{enumerate} \dispace
\item[1)] \textit{The Unique Base Theorem}, cf. \Qthmref{0.9}:
Given a base $(\veps_i\|i\in I)$ of a free $R$-module $V,$ we
obtain any other base of $V$ by permuting the $\veps_i$ and
multiplying them by units of $R.$

\item[2)] \textit{Existence of minimal orderings}, cf.
\S\ref{sec:I.6} below. Every $R$-module $V$ carries a partial
ordering, called the \bfem{minimal ordering} on $V,$ which is
defined as follows:
$$x\le y \ds\Leftrightarrow \exists z\in V: x+z=y.$$
In particular, $R$ itself has a minimal ordering. The minimal
ordering on $V$ is compatible with addition and scalar
multiplication. Basics about the minimal ordering on $R$ and then
on a free $R$-module are provided in \S\ref{sec:I.6}.
\end{enumerate}

The Unique Base Theorem is the source of our motivation for
introducing supertropicalizations of a quadratic form $q:V\to R$
on a free module $V$ over a ring $R$ by a so-called
\textit{supervaluation} $\varphi:R\to U$ with values in a
supertropical semiring $U$ in \Qsecref{9}. Given a base
$\tL=(\veps_i\|i\in I)$ of $V$, we obtained a
quadratic form $\tlq:U^{(I)}\to U$ on the standard free
$U$-module $U^{(I)}$ by this process [loc. cit.], which in some
sense measures $\tL$ in terms of $q$ and $\varphi.$ In
\S\ref{sec:II.8} of the present paper, we study how $\tilde q$ varies
with a change of the base $\tL$ in the simplest cases of
interest, where $I=\{1,2\}.$

Given a quadratic form $q:V\to R$ on a module $V$ over a
supertropical semiring $R,$ we call a vector $x\in V$
$q$-\bfem{minimal}, if $q(x')<q(x)$ for every vector $x'<x$ (with
respect to the minimal ordering of $V$ and $R)$.\footnote{As
usual, $x'<x$ means $x'\le x$ and $x'\ne x.$}

In the last sections \S\ref{sec:II.9} and \S\ref{sec:7}, we obtain a
detailed description of  all minimal vectors and certain relations between
them in the case that $V$ is free and $R$ is tangible
supertropical with $\tG(R)$ a cancellative monoid under
multiplication (in particular, if $R$ is a tangible
supersemifield).

Every $q$-minimal vector $x \in V$ is trapped in a smallest submodule $V_J = \sum_{i \in J} R v_i$ of $V$ with $|J| \leq 4$, and thus it suffices to study $q$-minimal vectors in a given free module of rank at most $4$. In \S\ref{sec:II.9} we easily find all $q$-minimal vectors for $|J| \leq 2$ (vectors of ``small support''). Then in
\S\ref{sec:7}  we prove that for $|J| = 3$ or $|J| = 4$ a  $q$-minimal vector $x$ is the maximum $y \vee z$ of a pair of  $q$-minimals $y$ and $z$ of small support which is  uniquely determined by $x$, except in one case, where $y$ and $z$ can be freely chosen in a triplet $y_1, y_2, y_3$ of  $q$-minimals of small support,  uniquely determined by $x$. Conversely, we find out which maxima $y \vee z$ of $q$-minimals $y,z$ with small support are again $q$-minimal.

The arguments in \S\ref{sec:II.9} and \S\ref{sec:7} may look massy due to the many case distinctions needed, but the give a good illustration of the, as we feel, beautiful combinatorics at hands in any supertropical quadratic space.

\begin{notation}\label{notation:0.1}
Let $\mathbb N=\{1,2,3,\dots\}$, $\mathbb N_0=\mathbb N\cup\{0\}.$
If $R$ is a semiring, then $R^*$ denotes the group of units of
$R.$

If $R$ is a supertropical semiring, then
\begin{enumerate} \dispace
\item[$\bullet$] $\tT(R):=R\sm eR=$ set of tangible
elements $\ne 0.$

 \item[$\bullet$] $\tG(R):=eR\sm \{0\}=$
set of ghost elements $\ne 0.$

\item[$\bullet$] $\nu_R$ denotes the ghost map $\to eR,$ $a\mapsto
ea.$
\end{enumerate}
When there is no ambiguity, we write $\tT,$ $\tG,$
$\nu$ instead of $\tT(R),$ $\tG(R),$
$\nu_R.$\newline For $a\in R$ we also write $ea=\nu(a)=a^\nu.$ $a \leq_\nu b$  means that $ea  \leq eb$, $a \nucong b$ (``$\nu$-equivalent'') means that $ea  = eb$, while  $a <_\nu b$  means that $ea  < eb$.
\end{notation}

\section{Pairs of vectors in a supertropical quadratic space}\label{sec:II.5}

\begin{defn}\label{defn:II.5.1}
\quad{}
\begin{enumerate}
  \item[ a)] A \bfem{quadratic module} over a semiring $R$ is a pair
 $(V,q)$ consisting of an $R$-module~$V$ and a (functional)
 quadratic form $q$ on $V.$ Later we often will  write  a
 single letter $V$ instead of $(V,q).$ \pSkip

\item[ b)] A \bfem{supertropical quadratic space} is a quadratic module
 over a tangible supersemifield.
 \end{enumerate}

 \end{defn}

We intend to study pairs of vectors in a supertropical quadratic
space. Preparing for this we slightly extend the notion of ``partial'' rigidity
developed in \Qsecref{3} (cf. \Qdefref{3.1}). This makes
sense over any supertropical semiring $R.$

\begin{defn}\label{defn:II.5.2}
Let $(V,q)$ be a quadratic module over a supertropical semiring
$R.$ We say that $q$ is $\nu$-\bfem{rigid at a point} $(x,y)$ of
$V\times V$ if
\begin{equation}\label{eq:II.5.1}
eb_1(x,y)=eb_2(x,y)\end{equation} for any two companions $b_1,b_2$
of $q,$ and we say that $q$ is $\nu$-\bfem{rigid on a set}
$T\subset V\times V$ or on a set $S\subset V,$ if this happens for
all $(x,y)$ in $T$ or in $S\times S$, respectively.
\end{defn}


If the $R$-module $V$ is free with base $(\veps_i \|i\in I)$,
then $\nu$-rigidity of $q$ at $(\veps_i,\veps_j)$
means that all $\bt\in C_{i,j}(q)$ have the same ghost value,
i.e., the set $e\cdot C_{i,j}(q)$ is a singleton. We have seen the
phenomenon of $\nu$-rigidity (beyond rigidity) already in   equation~$(6.5)$ of
\Qthmref{6.9}.
\pSkip

Assume as before that $R$ is a supertropical semiring $R,$ and
that $(V,q)$ is a quadratic module over $R.$ Given a pair of
vectors $(x,y)\in V\times V$, we have a unique  $R$-linear map
\begin{equation}\label{eq:II.5.2}
\chi:=\chi_{x,y}:R\veps_1+R\veps_2\to V\end{equation}
from the free $R$-module $R\veps_1+R\veps_2$ with base
$\veps_1,\veps_2$ to $V$ such that
$\chi(\veps_1)=x,$ $\chi(\veps_2)=y.$ This map $\chi$
composes with $q:V\to R$ to a quadratic form
\begin{equation}\label{eq:II.5.3} \tlq :=q\circ\chi:R\veps_1+R\veps_2\to R.\end{equation}

\begin{prop}\label{prop:II.5.3}
\quad{}

\begin{enumerate}\item[i)] If $b:V\times V\to R$ is a companion of
$q,$ then the symmetric bilinear form $\tlb$ on
$R\veps_1+R\veps_2$ defined by \begin{equation}\label{eq:II.5.4}
\tlb(v_1,v_2):=b(\chi(v_1),\chi(v_2)) \qquad (v_1,v_2\in V) \end{equation} is a
companion of $\tlq .$
\pSkip

\item[ii)] If $\tlq $ is rigid at
$(\veps_1,\veps_2),$ then $q$ is rigid at $(x,y).$

\pSkip
\item[iii)] If $\tlq $ is $\nu$-rigid at
$(\veps_1,\veps_2),$ then $q$ is $\nu$-rigid at $(x,y).$
\end{enumerate}
\end{prop}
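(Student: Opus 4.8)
The plan is to prove all three parts of Proposition \ref{prop:II.5.3} together, since (ii) and (iii) are formally almost identical and both follow from the key identity established in (i). Throughout write $\chi = \chi_{x,y}$, so that $\chi(\veps_1) = x$, $\chi(\veps_2) = y$, and recall $\tlq = q\circ\chi$.

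\emph{Proof of (i).} The form $\tlb$ defined by \eqref{eq:II.5.4} is clearly symmetric (since $b$ is) and $R$-bilinear (since $b$ is bilinear and $\chi$ is $R$-linear). It remains to check the companion identity \eqref{eq:0.2} for the pair $(\tlq, \tlb)$. For $v_1, v_2 \in R\veps_1 + R\veps_2$ we compute, using $R$-linearity of $\chi$ and the fact that $b$ is a companion of $q$,
\begin{equation*}
\tlq(v_1 + v_2) = q(\chi(v_1) + \chi(v_2)) = q(\chi(v_1)) + q(\chi(v_2)) + b(\chi(v_1),\chi(v_2)) = \tlq(v_1) + \tlq(v_2) + \tlb(v_1,v_2).
\end{equation*}
The homogeneity $\tlq(av) = a^2\tlq(v)$ is inherited from $q$ via $\chi(av) = a\chi(v)$. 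Hence $\tlb$ is a companion of $\tlq$.

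\emph{Proof of (ii) and (iii).} The main point is that the construction $b \mapsto \tlb$ of part (i) is \emph{surjective} onto the set of companions of $\tlq$ — or, more precisely, that the value $\tlb(\veps_1,\veps_2)$ ranges over exactly the same set as $b(x,y)$ does when $b$ runs over all companions of $q$. One inclusion is immediate from (i): every companion $b$ of $q$ yields a companion $\tlb$ of $\tlq$ with $\tlb(\veps_1,\veps_2) = b(x,y)$. For the reverse inclusion, suppose $\tlb'$ is an arbitrary companion of $\tlq$; I want to produce a companion $b'$ of $q$ with $b'(x,y) = \tlb'(\veps_1,\veps_2)$. Here I fix one companion $b$ of $q$ (which exists by the definition of quadratic form) and correct it: set $b' := b + \sig$ where $\sig$ is the symmetric bilinear form on $V$ that is designed to adjust the value at $(x,y)$ from $b(x,y)$ to $\tlb'(\veps_1,\veps_2)$ while leaving the companion property intact. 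The companion property is preserved precisely when $\sig(v,v) = 0$ for all $v$ (so that $q(v+w) + \sig(v+w,v+w)$-type corrections vanish on the diagonal) — this is the characterization of the "gauge freedom" in companions, developed in \Qsecref{3} of \cite{QF1}. Granting the existence of such a corrector $\sig$, we get $b'(x,y) = \tlb'(\veps_1,\veps_2)$. Consequently, if $\tlq$ is rigid at $(\veps_1,\veps_2)$ then the set $\{\tlb(\veps_1,\veps_2)\}$ over all companions of $\tlq$ is a singleton, hence $\{b(x,y)\}$ over all companions of $q$ is a singleton, i.e., $q$ is rigid at $(x,y)$; this is (ii). Applying the ghost map $\nu$ to the same conclusion — the set $\{eb(x,y)\}$ is a singleton — gives (iii).

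\emph{The main obstacle.} The genuinely delicate step is the surjectivity claim used for (ii) and (iii): lifting an arbitrary companion $\tlb'$ of the pulled-back form $\tlq$ to a companion of $q$ with the prescribed value at $(x,y)$. When $x, y$ are part of a base this is transparent, but in general $x$ and $y$ may be linearly dependent or "supertropically entangled", and one must check that the desired correction $\sig$ on the submodule $Rx + Ry$ — or just the single adjustment at $(x,y)$ — extends to a symmetric bilinear form on all of $V$ with vanishing diagonal. One clean way around this: note that we do not actually need to extend to all of $V$; rigidity of $q$ at $(x,y)$ only concerns the values $b(x,y)$, and by \Qsecref{3} of \cite{QF1} the companions of $q$ restricted to $Rx+Ry$ (as a quadratic form there) already capture this, so it suffices to run the argument inside $Rx+Ry$, where $\chi$ surjects onto a free rank-$\le 2$ module and the companion-correction calculus is exactly the one already developed in \cite{QF1}. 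I expect the formal write-up to invoke \Qthmref{3.5} or the companion-matrix description of \Qsecref{6} to make this last point rigorous without re-deriving the gauge calculus from scratch.
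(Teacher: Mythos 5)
Your part (i) is fine and is exactly the paper's argument: the pullback of a companion along the $R$-linear map $\chi$ satisfies the defining identity for a companion of $\tlq=q\circ\chi$. The problem is in how you set up (ii) and (iii). The implication asserted there goes in the direction that needs \emph{only} the forward construction of (i), not its surjectivity: given two companions $b_1,b_2$ of $q$, their pullbacks $\tlb_1,\tlb_2$ are companions of $\tlq$ by (i), and rigidity (resp.\ $\nu$-rigidity) of $\tlq$ at $(\veps_1,\veps_2)$ is a statement about \emph{all} companions of $\tlq$, so it applies to $\tlb_1,\tlb_2$ and gives $b_1(x,y)=\tlb_1(\veps_1,\veps_2)=\tlb_2(\veps_1,\veps_2)=b_2(x,y)$ (resp.\ the same after applying $e$). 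Equivalently, in your own language: the forward inclusion shows the value set $\{b(x,y)\}$ sits inside the value set $\{\tlb(\veps_1,\veps_2)\}$, and a nonempty subset of a singleton is a singleton. That is all the paper means by ``immediate consequences of i)''. The ``reverse inclusion'' you call the main point would be needed only for the converse implications (which are not claimed), so the ``main obstacle'' you identify is a phantom.

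Moreover, the surjectivity argument you sketch is not only unnecessary but unsound as written. Over a supertropical semiring there is no subtraction, so the companions of $q$ do not form a coset of $b$ under addition of forms with vanishing diagonal: adding $\sig$ with $\sig(v,v)=0$ to a companion generally destroys the identity $q(v+w)=q(v)+q(w)+b(v,w)$ unless the added values are $\nu$-dominated appropriately; the actual structure of the companion set is the companion-matrix description $C_{i,j}(q)$ of \Qsecref{6}, not a ``gauge freedom by alternating forms''. Since you leave the existence of the corrector $\sig$ as an unproven assumption (``granting the existence\dots''), your write-up of (ii)/(iii) has a genuine gap — fortunately one that disappears entirely once you drop the surjectivity claim and argue in the direction the statement actually requires.
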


\begin{proof} Claim i) follows directly from the definition of a
companion in \Qsecref{1} (\Qdefref{1.14}).

Claims ii) and iii) are immediate consequences of i).\end{proof}

Concerning quasilinearity, we have a stronger statement.

\begin{prop}\label{prop:II.5.4} Given $(x,y)\in V\times V$, the
following are equivalent.
\begin{enumerate} \eroman \item $q$ is quasilinear on $Rx \times Ry.$
\pSkip

\item$q$ is quasilinear on $Rx+Ry.$
\pSkip

\item
$\tlq $ is quasilinear on   $ R \veps_1 \times R \veps_2.$
\pSkip

\item $\tlq $ is quasilinear.
\end{enumerate}\end{prop}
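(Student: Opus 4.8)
The plan is to prove the chain of implications (i) $\Rightarrow$ (ii) $\Rightarrow$ (iv) $\Rightarrow$ (iii) $\Rightarrow$ (i), or some convenient cycle among them, exploiting the fact that quasilinearity is an identity that propagates under scalar multiplication and under the $R$-linear map $\chi$. First I would recall the definition of quasilinearity from the introduction: $q$ is quasilinear on a set $S \times S'$ iff $q(u + w) = q(u) + q(w)$ for all $u \in S$, $w \in S'$. The key elementary fact I would invoke is \Qpropref{0.5}, which says that quadratic forms satisfy $(\lambda + \mu)^2 = \lambda^2 + \mu^2$; consequently, for a single pair $(x,y)$, the identity $q(x+y) = q(x) + q(y)$ already forces $q(\lambda x + \mu y) = \lambda^2 q(x) + \mu^2 q(y)$ for all $\lambda, \mu \in R$, because $q(\lambda x + \mu y) = q(\lambda x) + q(\lambda x, \mu y)_b + q(\mu y)$ and $b(\lambda x, \mu y) = \lambda \mu\, b(x,y)$, while $b(x,y)$ is "absorbed" by the ghost surplus exactly under the hypothesis $b(x,y) \le_\nu q(x) + q(y)$ that quasilinearity on $\{x,y\}$ encodes. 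This observation does most of the work: it shows that quasilinearity on the pair of generators spreads to all scalar multiples, hence to the whole submodule $Rx + Ry$.

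Concretely: for (i) $\Rightarrow$ (ii), given $q(\lambda x + \mu y) = q(\lambda x) + q(\mu y)$ for all $\lambda, \mu$, a general element of $Rx + Ry$ has the form $\lambda x + \mu y$, and for two such elements $u = \lambda_1 x + \mu_1 y$, $w = \lambda_2 x + \mu_2 y$ one computes $q(u + w) = q((\lambda_1 + \lambda_2)x + (\mu_1 + \mu_2)y)$; using $(\lambda_i + \lambda_j)^2 = \lambda_i^2 + \lambda_j^2$ and expanding, this equals $q(\lambda_1 x) + q(\lambda_2 x) + q(\mu_1 y) + q(\mu_2 y) = q(u) + q(w)$, where the cross terms $b(\lambda_1 x, \mu_2 y)$ etc. are swallowed because (i) tells us $b(x,y) \le_\nu q(x) + q(y)$ and this is preserved under multiplication by $\lambda_i \mu_j$ (in a bipotent/supertropical setting, multiplying both sides of a $\nu$-inequality by the same element preserves it). The implication (ii) $\Rightarrow$ (i) is trivial since $Rx \times Ry \subseteq (Rx+Ry) \times (Rx+Ry)$. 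For the passage to $\tlq$, I would use \propref{prop:II.5.3}(i): a companion $b$ of $q$ pulls back to a companion $\tlb$ of $\tlq$ via $\chi$, and $\chi(R\veps_1 + R\veps_2) = Rx + Ry$ with $\chi(\lambda\veps_1 + \mu\veps_2) = \lambda x + \mu y$. Since $\tlq = q \circ \chi$, quasilinearity of $\tlq$ on $R\veps_1 \times R\veps_2$ (resp. everywhere) is literally the statement that $q$ is quasilinear on $Rx \times Ry$ (resp. on $Rx+Ry$, using that $\chi$ is surjective onto $Rx+Ry$ and every element of the source is a combination $\lambda\veps_1 + \mu\veps_2$). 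So (i) $\Leftrightarrow$ (iii) and (ii) $\Leftrightarrow$ (iv) are essentially translations through $\chi$, and (iii) $\Leftrightarrow$ (iv) reduces to the rank-$2$ case of the (i) $\Leftrightarrow$ (ii) argument already established.

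The main obstacle I anticipate is the bookkeeping around "the cross term is absorbed": I must be careful that quasilinearity on the bare pair $\{x,y\}$ (equivalently, on $Rx \times Ry$ in the weak sense of (i) — note (i) already quantifies over all scalars, so this is really the point) genuinely implies the $\nu$-inequality $eb(x,y) \le e q(x) + e q(y) = e(q(x)+q(y))$, and then that this inequality, multiplied through by $e\lambda_i\mu_j$, still dominates — i.e. that $e\lambda_i\mu_j b(x,y) \le e\lambda_i\mu_j(q(x)+q(y)) \le e(q(\lambda_i x) + q(\mu_j y) + \dots)$ in the relevant expansion. This uses monotonicity of multiplication on the bipotent semiring $eR$ together with \eqref{eq:0.6} describing addition, and one should double-check the degenerate cases where some scalar is $0$ or where $q(x)$ or $q(y)$ vanishes (the anisotropic/isotropic split), but none of these cause real trouble since the identity $q(ax) = a^2 q(x)$ handles $0$ automatically. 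Modulo this routine verification, the proposition follows.
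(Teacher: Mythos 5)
Your proposal is correct in substance, and its skeleton largely matches the paper's: the equivalences (i) $\Leftrightarrow$ (iii) and (ii) $\Leftrightarrow$ (iv) are, exactly as in the paper, translations through the linear map $\chi$ (Proposition \ref{prop:II.5.3}(i) together with the additivity and surjectivity of $\chi$ onto $Rx+Ry$). Where you genuinely differ is the remaining link. The paper closes the cycle with (iii) $\Leftrightarrow$ (iv) by citing the companion-set machinery of \Qsecref{5}: condition (iii) says $0\in C_{1,2}(\tlq)$, and since $0\in C_{i,i}(\tlq)$ holds automatically, the zero form is a companion of $\tlq$. You instead prove (i) $\Rightarrow$ (ii) directly: for $u=\lambda_1x+\mu_1y$, $w=\lambda_2x+\mu_2y$ apply (i) to the scalars $\lambda_1+\lambda_2$, $\mu_1+\mu_2$ and use $(\lambda_1+\lambda_2)^2=\lambda_1^2+\lambda_2^2$, obtaining $q(u+w)=q(u)+q(w)$; the same computation in the free rank-two module gives (iii) $\Rightarrow$ (iv). This is a legitimate, more self-contained route (no appeal to \cite{QF1} beyond the Frobenius identity), whereas the paper buys brevity by reusing its earlier companion-matrix results.

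One caveat: the auxiliary principle you lean on rhetorically --- that the cross terms are ``swallowed'' because (i) yields $b(x,y)\le_\nu q(x)+q(y)$ and such $\nu$-inequalities are preserved under scaling --- is not a valid absorption rule, nor is that inequality what quasilinearity of the pair ``encodes''. In a supertropical semiring $a+c=a$ requires $c<_\nu a$ or $a$ ghost; if $c\cong_\nu a$ with $a$ tangible the sum is $ea\ne a$ (this is precisely the phenomenon behind Remark \ref{rem:II.5.13} and the discrete case in Theorem \ref{thm:II.5.5}). Conversely, $b(x,y)\le_\nu q(x)+q(y)$ alone does not give $q(x+y)=q(x)+q(y)$ (take $q(x)<_\nu q(y)$ with $q(y)$ tangible and $b(x,y)\cong_\nu q(y)$). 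Fortunately your displayed chain of equalities never needs any of this: applying (i) with the summed scalars produces no cross terms at all, so the ``main obstacle'' you flag is a non-issue --- simply delete the absorption remarks and the argument stands as written.
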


\begin{proof}
Condition (iii) means that $0\in C_{1,2}(\tlq ).$ Since  $0\in
C_{i,i}(\tlq )$ holds for $i=1,2,$ it is clear from \Qsecref{5} that (iii)
$\Leftrightarrow$ (iv). \pSkip

(ii) means that $q$ is additive on $Rx + Ry,$ while (iv) means
that $\tlq $ is additive. Thus the equivalence (ii)
$\Leftrightarrow$ (iv) follows from the additivity and
surjectivity of $\chi$ as a map from
$R\veps_1+R\veps_2$ to $Rx+Ry.$ \pSkip

(i) means that $q(\lm x+ \mu y)=q(\lm x)+q(\mu y)$, and (iii) means that $$\tlq (\lm \veps_1+ \mu \veps_2)=\tlq (\lm \veps_1)+\tlq (\mu \veps_2)$$ for all $\lm, \mu \in R$ (cf. \Qdefref{2.3}). Thus clearly (i)
$\Leftrightarrow$ (iii).

We conclude that all four conditions (i) -- (iv) are
equivalent.\end{proof}

We are ready for a key theorem of the paper, emanating from \Qsecref{7}.

\begin{thm}\label{thm:II.5.5}
Assume that $R$ is a nontrivial tangible supersemifield and $(q,b)$ is a quadratic
pair on an $R$-module $V.$ Let $(x,y)$ be a pair of vectors in
$V.$ We adhere to \cite[Terminology~7.7]{QF1}.
\begin{enumerate}\item[a)] Assume that $R$ is dense. Then $q$ is
quasilinear on $Rx+Ry$ iff
\begin{equation}\label{eq:II.5.5}
b(x,y)^2\le_\nu q(x)q(y).\end{equation}
Otherwise $q$ is rigid at $(x,y)$.
\pSkip

\item[b)] Assume that $R$ is discrete with $\pi$ a prime element of
$R$.

Now $q$ is quasilinear on $Rx+Ry$ if either
\begin{equation}\label{eq:II.5.6}
b(x,y)^2<_\nu \pi^{-1}q(x)q(y)\end{equation} or both values
$q(x),q(y)$ are ghost and
\begin{equation}\label{eq:II.5.7}
b(x,y)^2\cong_\nu \pi^{-1}q(x)q(y).\end{equation}
 Otherwise $q$ is $\nu$-rigid at $(x,y). $ If
 \begin{equation}\label{eq:II.5.8}
 b(x,y)^2>_\nu \pi^{-1}q(x)q(y)\end{equation}
 then $q$ is rigid at $(x,y).$\end{enumerate}
\end{thm}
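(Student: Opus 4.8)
The plan is to reduce Theorem~\ref{thm:II.5.5} to the rank-$2$ situation handled in \cite[\S7]{QF1}. By Proposition~\ref{prop:II.5.3}(i) the bilinear form $\tlb$ defined via $\chi=\chi_{x,y}$ is a companion of $\tlq=q\circ\chi$, and by construction $\tlb(\veps_1,\veps_2)=b(x,y)$, $\tlq(\veps_i)=q(x)$ or $q(y)$ accordingly. Hence the conditions \eqref{eq:II.5.5}--\eqref{eq:II.5.8} translate verbatim into the corresponding conditions on the entries $\tlq(\veps_1),\tlq(\veps_2)$ and $\tlb(\veps_1,\veps_2)$ of the quadratic pair $(\tlq,\tlb)$ on the free rank-$2$ module $R\veps_1+R\veps_2$. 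Proposition~\ref{prop:II.5.4} gives that $q$ is quasilinear on $Rx+Ry$ if and only if $\tlq$ is quasilinear; Proposition~\ref{prop:II.5.3}(ii), (iii) transport (ordinary and $\nu$-)rigidity at $(\veps_1,\veps_2)$ down to $(x,y)$. So it is enough to prove the asserted dichotomy for an arbitrary quadratic pair on a free module of rank $2$, in terms of its companion matrix entry $C_{1,2}(\tlq)$ and the diagonal values $\tlq(\veps_1),\tlq(\veps_2)$.

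\textbf{Carrying it out.} First I would spell out, using \cite[\S5, \S6]{QF1}, that for a rank-$2$ free module the set $C_{1,2}(\tlq)$ of admissible off-diagonal companion values is precisely the solution set of the single condition coming from \eqref{eq:0.2} applied to $\veps_1,\veps_2$, and that $\tlq$ is quasilinear exactly when $0\in C_{1,2}(\tlq)$, rigid exactly when $C_{1,2}(\tlq)$ is a singleton, and $\nu$-rigid exactly when $eC_{1,2}(\tlq)$ is a singleton. Then I would invoke the explicit description of $C_{1,2}(\tlq)$ over a tangible supersemifield from \cite[\S7]{QF1}, adhering to \cite[Terminology~7.7]{QF1}: in the dense case this description yields that $0$ lies in $C_{1,2}(\tlq)$ precisely when $\tlb(\veps_1,\veps_2)^2\le_\nu \tlq(\veps_1)\tlq(\veps_2)$, and that otherwise $C_{1,2}(\tlq)=\{\tlb(\veps_1,\veps_2)\}$, i.e.\ $\tlq$ is rigid at $(\veps_1,\veps_2)$; in the discrete case with prime $\pi$, the same description splits into the three regimes governed by comparison of $\tlb(\veps_1,\veps_2)^2$ with $\pi^{-1}\tlq(\veps_1)\tlq(\veps_2)$, with the boundary case $\cong_\nu$ behaving differently according to whether both diagonal values are ghost (forcing $0\in C_{1,2}$, hence quasilinear) or at least one is tangible (forcing $eC_{1,2}$ a singleton but $0\notin C_{1,2}$, hence $\nu$-rigid but not quasilinear), while the strict inequality $>_\nu$ forces $C_{1,2}$ itself to be a singleton, hence rigid.

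\textbf{Main obstacle.} The genuinely hard input is the explicit computation of $C_{1,2}(\tlq)$ over a tangible supersemifield, together with the subtle boundary behaviour in the discrete case; but this is exactly the content of \cite[\S7]{QF1} (in particular the results around \cite[Theorem~6.9]{QF1} and \cite[Terminology~7.7]{QF1}), which we are entitled to cite and which the introduction explicitly flags as the hard results needed here. Thus within the present paper the proof is the short translation argument above: apply Propositions~\ref{prop:II.5.3} and~\ref{prop:II.5.4} to pass from $(q,b)$ on $Rx+Ry$ to $(\tlq,\tlb)$ on the rank-$2$ module, match the hypotheses \eqref{eq:II.5.5}--\eqref{eq:II.5.8} with the three (resp.\ two) cases of the cited classification, and read off quasilinearity, $\nu$-rigidity or rigidity at $(\veps_1,\veps_2)$, which descend to the corresponding conclusions at $(x,y)$.
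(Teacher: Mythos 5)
Your proposal is correct and follows essentially the same route as the paper: reduce via Propositions~\ref{prop:II.5.3} and~\ref{prop:II.5.4} to the free rank-$2$ case with $x=\veps_1$, $y=\veps_2$, and then read off the quasilinearity/$\nu$-rigidity/rigidity trichotomy from the explicit classification of companions in \cite[\S7]{QF1} (the paper cites Proposition~7.9 and Theorems~7.11, 7.12 there). No gap.
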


\begin{proof}
By Propositions \ref{prop:II.5.3} and \ref{prop:II.5.4} above it
suffices to prove these claims in the special case that $V$ is
free with base $\veps_1,\veps_2$ and
$x=\veps_1,$ $y=\veps_2.$ Now the results can be read
off from \Qpropref{7.9} and \cite[Theorems 7.11 and 7.12]{QF1}.
\end{proof}

In order to obtain a better grasp on the contents of this theorem,
we introduce more terminology. \textit{As before $R$ is a nontrivial tangible
supersemifield}.

\begin{defn}\label{defn:II.5.6}
Assume that $(q,b)$ is a quadratic pair on an $R$-module $V.$ We
say that a pair of vectors $(x,y)\in V\times V$ is
\bfem{excessive} (w.r.t. $(q,b)$), if the following holds:
\begin{enumerate}
\item[a)] If $R$ is dense, then
$$b(x,y)^2>_\nu q(x)q(y).$$
\pSkip

\item[b)] If $R$ is discrete, then either
$$b(x,y)^2>_\nu \pi^{-1}q(x)q(y),$$
or
$$b(x,y)^2\cong_\nu \pi^{-1}q(x)q(y)$$
and $q(x)\in \tT$ or $q(y)\in \tT.$\end{enumerate}\end{defn}

\thmref{thm:II.5.5}, up to the rigidity statements there, can be
reformulated as follows.

\begin{cor}\label{cor:II.5.7}
A pair $(x,y)\in V\times V$ is excessive with respect to $(q,b)$
iff $q$ is \bfem{not} quasilinear on $Rx+Ry.$\end{cor}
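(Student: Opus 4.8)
The plan is to derive Corollary~\ref{cor:II.5.7} directly from Theorem~\ref{thm:II.5.5} by matching the defining conditions of excessiveness against the ``quasilinear'' alternatives in that theorem, treating the dense and discrete cases separately. First I would note that by Proposition~\ref{prop:II.5.4} the property ``$q$ is quasilinear on $Rx+Ry$'' is equivalent to ``$q$ is quasilinear on $Rx\times Ry$'' and to the corresponding statements for $\tlq$, so it is literally the negation of the condition we have to analyse; hence it suffices to show that $(x,y)$ is excessive (Definition~\ref{defn:II.5.6}) precisely when $q$ is \emph{not} quasilinear on $Rx+Ry$.

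In the dense case, Theorem~\ref{thm:II.5.5}(a) says $q$ is quasilinear on $Rx+Ry$ iff $b(x,y)^2\le_\nu q(x)q(y)$. Since $eR$ is totally ordered, the negation of $\le_\nu$ is exactly $>_\nu$, which is the Definition~\ref{defn:II.5.6}(a) condition for excessiveness. So in this case the equivalence is immediate. The only subtlety worth spelling out is that $q(x)q(y)$ makes sense in $eR$ after applying $\nu$ and that the total order on $eR$ guarantees trichotomy; both are recalled in the Introduction.

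In the discrete case I would argue that the quasilinearity criterion in Theorem~\ref{thm:II.5.5}(b) --- namely, $b(x,y)^2<_\nu \pi^{-1}q(x)q(y)$, or both $q(x),q(y)$ ghost together with $b(x,y)^2\cong_\nu \pi^{-1}q(x)q(y)$ --- is the exact logical complement, within the three-way split given by trichotomy of $<_\nu,\cong_\nu,>_\nu$ applied to $b(x,y)^2$ versus $\pi^{-1}q(x)q(y)$, of the excessiveness condition in Definition~\ref{defn:II.5.6}(b). Indeed, if $b(x,y)^2>_\nu\pi^{-1}q(x)q(y)$ then $(x,y)$ is excessive and (by the last sentence of Theorem~\ref{thm:II.5.5}(b)) $q$ is rigid, hence certainly not quasilinear, at $(x,y)$; and if $b(x,y)^2\cong_\nu\pi^{-1}q(x)q(y)$ then $(x,y)$ is excessive iff $q(x)\in\tT$ or $q(y)\in\tT$, i.e.\ iff \emph{not} both $q(x),q(y)$ are ghost, which is exactly when the theorem's quasilinearity clause fails; while if $b(x,y)^2<_\nu\pi^{-1}q(x)q(y)$ then $(x,y)$ is not excessive and $q$ is quasilinear. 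Thus in every one of the (at most) three mutually exclusive, exhaustive cases, ``excessive'' and ``not quasilinear on $Rx+Ry$'' hold simultaneously, which proves the corollary.

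I do not expect a genuine obstacle: the proof is a careful bookkeeping exercise in negating the conditions of Theorem~\ref{thm:II.5.5} and using that $eR$ is totally ordered. The one point to be attentive about is that Theorem~\ref{thm:II.5.5} in the ``not quasilinear'' regime sometimes asserts $\nu$-rigidity rather than full rigidity (that is why the corollary is phrased as a reformulation ``up to the rigidity statements there''); so I must not claim more than what is needed, namely the equivalence of excessiveness with the failure of quasilinearity, and leave the finer rigid/$\nu$-rigid distinction to the theorem itself.
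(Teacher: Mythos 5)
Your argument is correct and is essentially the paper's own: the corollary is stated there without a separate proof, precisely as the reformulation of Theorem~\ref{thm:II.5.5} obtained by matching Definition~\ref{defn:II.5.6} against the quasilinearity clauses of the theorem, which is exactly the case-by-case negation (dense vs.\ discrete, with trichotomy of $<_\nu,\cong_\nu,>_\nu$) you carry out. Your closing caution about not claiming rigidity or $\nu$-rigidity beyond what is needed also matches the paper's phrase ``up to the rigidity statements there.''
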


An intriguing point here is that the property ``excessive''
depends only on $x,y,q.$ The choice of the companion $b$ has no
influence, but, of course, is relevant for deciding by computation
whether $(x,y)$ is excessive or not.

We state an easy consequence of \corref{cor:II.5.7}.

\begin{prop}\label{prop:II.5.8}
Let $(x,y)$ and $(x',y')$ be pairs of vectors in a quadratic space
$(V,q)$ over a tangible supersemifield. Assume that $(x',y')$ is
excessive and $Rx'+Ry'\subset Rx+Ry.$ Then $(x,y)$ is excessive.
\end{prop}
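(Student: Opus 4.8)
The plan is to reduce everything to Corollary~\ref{cor:II.5.7}, which re-expresses excessiveness of a pair purely as the failure of quasilinearity of $q$ on the submodule spanned by that pair, with no reference to a companion. Thus ``$(x',y')$ excessive'' will be read as ``$q$ is \emph{not} quasilinear on $Rx'+Ry'$'', and to prove the proposition it suffices to show that $q$ is not quasilinear on $Rx+Ry$ either; Corollary~\ref{cor:II.5.7} applied to $(x,y)$ then delivers the conclusion.

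I would argue by contraposition. Assume $q$ \emph{were} quasilinear on $Rx+Ry$. By Proposition~\ref{prop:II.5.4} (the equivalence (ii)~$\Leftrightarrow$~(iv)), this means that $q$ is additive on the submodule $W:=Rx+Ry$, i.e.\ $q(u+v)=q(u)+q(v)$ for all $u,v\in W$. Since $Rx'+Ry'$ is a submodule of $V$ contained in $W$, the additivity of $q$ on $W$ restricts verbatim to additivity of $q$ on $Rx'+Ry'$; that is, $q$ is quasilinear on $Rx'+Ry'$. By Corollary~\ref{cor:II.5.7} this makes $(x',y')$ non-excessive, contradicting the hypothesis. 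Hence $q$ is not quasilinear on $Rx+Ry$, so $(x,y)$ is excessive.

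I expect no real obstacle beyond bookkeeping. The one point needing care is to work with the module-theoretic reformulation of ``quasilinear on a pair'' (Proposition~\ref{prop:II.5.4}) rather than with the raw inequalities of Definition~\ref{defn:II.5.6}: those inequalities involve $b(x,y)$, $q(x)$, $q(y)$ and do not visibly transfer between different spanning pairs of the same submodule. It is exactly the companion-independence packaged into Corollary~\ref{cor:II.5.7}---the substantive content extracted from Theorem~\ref{thm:II.5.5}---that turns the present statement into a triviality. One may note in passing that the same argument gives the slightly more general fact: if $W'\subseteq W$ are submodules of $V$ and $q|W'$ is not quasilinear, then $q|W$ is not quasilinear.
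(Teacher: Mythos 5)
Your proposal is correct and follows essentially the same route as the paper: the paper also argues by contradiction, noting that if $q$ were quasilinear on $Rx+Ry$ it would be quasilinear on the submodule $Rx'+Ry'$, contradicting (via Corollary~\ref{cor:II.5.7}) the excessiveness of $(x',y')$. Your explicit appeal to Proposition~\ref{prop:II.5.4} to phrase quasilinearity as additivity, which restricts to submodules, merely spells out the step the paper leaves implicit.
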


\begin{proof} Otherwise $q$ would be quasilinear on $Rx+Ry.$ But
this implies that $q$ is quasilinear on $Rx'+Ry',$ a
contradiction.\end{proof}

We now relax the assumption that $R$ is a tangible supersemifield and demonstrate that several results  obtained so far in the section  remain valid in greater generality.

\begin{convention}\label{conv:II.5.9}
We only assume that $R$ is a supertropical semiring and $eR$ is a semifield, i.e., every element of $\tG = eR \sm \00 $  is invertible in $eR;$ hence $\tG$ is a totaly ordered group.  Moreover we assume that $eR$  is ``nontrivial'', i.e., $\tG \neq \{ e\}$. We do not assume anything about $\tT := R \sm eR.$ ($\tT$ may even be empty.) We call $\tG$ \textbf{discrete}, if $\tG$ contains a smallest element $c > e$, which we denote by $c_0.$ (If $R$ is a tangible supersemifield then $c_0 = e \pi ^{-1}$ in
the setting \cite[Terminology~7.7]{QF1}.)
Otherwise we call $\tG$ \textbf{dense}.
\end{convention}

Assume in the following that $(q,b)$ is a quadratic pair on the $R$-module $V.$
For the sake of brevity   we call a pair $x,y$ of vectors in $V \sm \{ 0 \}$  \textbf{quasilinear} if $q$ is quasilinear on
$Rx \times Ry$, equivalently if the restriction  $q | Rx \times Ry$ of $q$ is quasilinear.

\begin{defn}\label{def:II.5.10}
We say that a pair of vectors $x,y$ in $V \sm \00 $ is \textbf{CS} (acronym for ``Cauchy-Schwarz''), if
\begin{equation}\label{eq:II.5.9}
b(x,y)^2 \nul q(x)q(y).\end{equation}
We call $(x,y)$ \textbf{weakly CS}, if
\begin{equation}\label{eq:II.5.10}
b(x,y)^2 \nule q(x)q(y)\end{equation}
(a condition already appearing in \eqref{eq:II.5.5}), and we call $(x,y)$
 \textbf{almost CS}, if
\begin{equation}\label{eq:II.5.11}
b(x,y)^2 \nule c q(x)q(y)\end{equation}
for all $c > e$ in $\tG.$ \footnote{In \cite[\S5]{IKR-LinAlg2} the terms ``CS'' and ``weakly CS'' have been used in a similar way for pairs of vectors with respect to a (not necessarily symmetric) bilinear form. }
\end{defn}

\begin{remark}\label{rem:II.5.11}
Assume that $(x,y)$ is almost CS. If $\tG$ is dense, then $(x,y)$ is weakly CS, whereas if $\tG$ is discrete, either $(x,y) $ is weakly CS, or $b(x,y)^2 \nucong c_0 q(x)q(y)$.
\end{remark}
We save a relevant part of Theorem \ref{thm:II.5.5} in the present more general situation.

\begin{thm}\label{thm:II.5.12}
If either $(x,y)$ is weakly CS, or  $(x,y)$ is almost  CS and both $q(x) $ and $q(y)$ are ghost, then $(x,y)$  is quasilinear.
\end{thm}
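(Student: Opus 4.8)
Throughout write $\nu=\nu_R$ for the ghost map. The plan is to verify the pointwise identity defining quasilinearity of the pair, namely $q(\lm x+\mu y)=q(\lm x)+q(\mu y)$ for all $\lm,\mu\in R$ (cf.\ Proposition \ref{prop:II.5.4}). Expanding the left side with the companion relation \eqref{eq:0.2}, \eqref{eq:0.1} and the bilinearity of $b$ gives
$$q(\lm x+\mu y)=\lm^2 q(x)+\mu^2 q(y)+\lm\mu\, b(x,y),$$
so the claim becomes the \emph{absorption identity} $t+s=t$ for all $\lm,\mu$, where $t:=\lm^2 q(x)+\mu^2 q(y)$ and $s:=\lm\mu\, b(x,y)$. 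If $s=0$ this is trivial, so I assume $s\neq 0$; then $\lm,\mu,b(x,y)$ and — reading off \eqref{eq:II.5.10} in the weakly CS case, or the $\nu$-relation of Remark \ref{rem:II.5.11} in the other case — also $q(x),q(y)$ are nonzero, so every $\nu$-value occurring below lies in the totally ordered group $\tG$. I will use two standing facts: the addition rule \eqref{eq:0.6}, which says that $t+s=t$ holds exactly when $s\nul t$, or $s\nucong t$ and $t\in eR$; and that $\nu$ is a semiring homomorphism onto the bipotent semiring $eR$, so that $\nu(t)=\max\bigl(\nu(\lm^2 q(x)),\,\nu(\mu^2 q(y))\bigr)$ in the order of $\tG$.

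First I would treat the case that $(x,y)$ is weakly CS. Squaring \eqref{eq:II.5.10} and multiplying by $\nu(\lm)^2\nu(\mu)^2$ yields $\nu(s)^2\le \nu(\lm^2 q(x))\,\nu(\mu^2 q(y))$, and since $g\mapsto g^2$ is strictly increasing on $\tG$ this gives $\nu(s)\le\nu(t)$. If $\nu(\lm^2 q(x))\neq\nu(\mu^2 q(y))$, then in fact $\nu(\lm^2 q(x))\,\nu(\mu^2 q(y))<\nu(t)^2$, so $s\nul t$ and \eqref{eq:0.6} applies. If instead $\nu(\lm^2 q(x))=\nu(\mu^2 q(y))$, then $t$ is a sum of two elements of equal $\nu$-value, hence $t\in eR$ by \eqref{eq:0.4}, and $s\nule t$ together with \eqref{eq:0.6} again gives $t+s=t$.

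Next, the case that $(x,y)$ is almost CS with $q(x),q(y)\in eR$. By Remark \ref{rem:II.5.11} I may assume $\tG$ is discrete and $b(x,y)^2\nucong c_0\,q(x)q(y)$, the alternative being covered above. Now $\lm^2 q(x)$ and $\mu^2 q(y)$ are ghost, hence so is $t$, and it remains to show $\nu(s)\le\max(A,B)$ where $A:=\nu(\lm^2 q(x))$, $B:=\nu(\mu^2 q(y))$. Squaring the $\nu$-relation and multiplying by $\nu(\lm)^2\nu(\mu)^2$ gives $\nu(s)^2=c_0\,AB$. The crucial point is $A\neq B$: otherwise $c_0=(\nu(s)A^{-1})^2$ would be a square in $\tG$, which is impossible, since a square root $w$ of $c_0$ would satisfy $w>e$ (any $w\le e$ gives $w^2\le e<c_0$) and hence $w\ge c_0$ by minimality of $c_0$, whence $c_0=w^2\ge wc_0>c_0$. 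So, say, $A>B$; then $AB^{-1}>e$, hence $AB^{-1}\ge c_0$ by minimality of $c_0$, so $\nu(s)^2=c_0AB\le A^2$ and $\nu(s)\le A=\nu(t)$. Since $t$ is ghost, \eqref{eq:0.6} once more yields $t+s=t$, and the proof is complete.

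The companion expansion and the translation into the criterion \eqref{eq:0.6} are routine bookkeeping. The one genuinely delicate step is the discrete almost-CS subcase: ruling out the coincidence $A=B$ via the non-squareness of $c_0$, and then converting $A>B$ into the quantitative bound $A\ge c_0B$ through minimality of $c_0$. It is precisely the borderline possibility $\nu(s)=\nu(t)$ arising there that makes the hypothesis ``$q(x),q(y)$ ghost'' indispensable: it forces $t$ to be ghost, so that $t$ still absorbs $s$ even in that borderline case.
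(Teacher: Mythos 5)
Your proof is correct and takes essentially the same route as the paper's: an elementary case analysis on $\nu$-values showing that the cross term is absorbed by $q(\lambda x)+q(\mu y)$, with the delicate discrete borderline case settled by the minimality of $c_0$ (non-squareness forcing the two dominant terms apart) and the ghost hypothesis rescuing the case of equal $\nu$-values. The only cosmetic differences are that the paper first reduces to $\lambda=\mu=1$ by scaling invariance together with Proposition~\ref{prop:II.5.4}, comparing $b(x,y)$ directly with $q(x)$ and $q(y)$, whereas you carry general scalars throughout and spell out explicitly that $c_0$ is not a square in $\tG$, a point the paper leaves implicit.
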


\begin{proof}
If $(x,y)$ satisfies the assumptions of the theorem then so does $(\lm x, \mu y)$ for all $\lm, \mu \in R \sm \00.$
Thus in  view of  Proposition \ref{prop:II.5.4} it  suffices to prove that
 \begin{equation}\renewcommand{\theequation}{$*$}\addtocounter{equation}{-1}\label{eq:str.1}
q(x+y) = q(x) + q(y).
\end{equation}
In general we have
 \begin{equation}\renewcommand{\theequation}{$**$}\addtocounter{equation}{-1}\label{eq:str.1}
q(x+y) = q(x) + q(y) + b(x,y).
\end{equation}
If $b(x,y)^2 \nul q(x) q(y)$, then either
$b(x,y) \nul q(x) $ or $b(x,y) \nul  q(y)$, and the summand $b(x,y)$ in $(**)$ can be omitted, giving $(*).$
\pSkip

Assume now that $b(x,y)^2 \nucong q(x) q(y)$. If $q(x), q(y)$ are not $\nu$-equivalent, say $q(x) \nul q(y)$, then $b(x,y)^2 \nul q(y)^2$, hence  $b(x,y) \nul q(y)$, and again the term $b(x,y)$ can be omitted in~$(**)$.  If $q(x) \nucong q(y)$ then we have  $b(x,y)^2 \nucong q(x)^2$, hence
 $b(x,y) \nucong q(x) \nucong q(y),$ and the right hand side of~$(**)$ equals $eq(x) = q(x) + q(y).$ Thus $(*) $ holds again. \pSkip

There remains the case  that $\tG$ is discrete and $b(x,y)^2 \nucong c_0 q(x) q(y).$ Now $q(x)q(y)$ is not  a $\nu$-square. We may assume   that $q(x) \nul q(y)$. Now $c_0 q(x) \nule q(y)$. Hence   $b(x,y)^2 \nule q(y)^2$, and  hence  $b(x,y) \nule  q(y)$. If $b(x,y) \nul  q(y)$ we obtain $(*)$ from $(**)$ as before. Otherwise  $b(x,y) \nucong  q(y)$, and hence $q(x+y)= eq(y)= q(x) + eq(y).$ Thus, if $q(y) \in eR,$ then $q(x+ y)  = q(x) + q(y).$
\end{proof}

\begin{remark}\label{rem:II.5.13} The bad case is that $R$ is discrete, with $c_0 q(x) \nucong q(y) \nucong b(x,y),$ perhaps after interchanging~$x$ and $y$, and $q(y)$ is tangible. Then $q(x) + q(y) = q(y),$ while $q(x+y) = q(y)+ b(x,y) = eq(y).$
\end{remark}

\begin{remark}\label{rem:II.5.14}
Let P be any  of the properties in Definition \ref{def:II.5.10} $(\CS, \dots)$ or -- if $R$ is a tangible supersemifield -- one of the conditions in Theorem \ref{thm:II.5.5}. Assume that $\lm,\mu \in \tT.$ Then it is obvious that a pair $(x,y) \in V \times V$ has property P iff $(\lm x, \mu y)$ has property P. Except for the properties discussed  in Theorem \ref{thm:II.5.5}.b involving \eqref{eq:II.5.7},  this even remains true if $\lm, \mu \in R \sm \00 .$
\end{remark}

\section{CS-ratios: Definition and subadditivity }\label{sec:II.6}

If $R$ is any semiring and $q : V \to R$ is  a quadratic form on an $R$-module $V$, we call a vector  $x \in V \sm \{ 0 \}$ \textbf{isotropic} if $q(x) = 0$ and \textbf{anisotropic} if $q(x) \neq 0.$  The zero vector in~$V$ is regarded both as isotropic and  anisotropic. If the semiring $R$ is supertropical, it follows directly from the definition of a quadratic from (cf. \cite[\eq{0.1} and \eq{0.2}]{QF1}) that the set of anisotropic vectors
\begin{equation}\label{eq:II.6.1}
V_{\an}:=\{x\in V \ds |q(x)\ne0\} \cup \{ 0 \},\end{equation}
is an $R$-submodule of $V$, and moreover
\begin{equation}\label{eq:II.6.2}
V+V_{\an} =V_{\an}.\end{equation}

We now always assume in this section that $R$ is supertropical,  that $eR$ \bfem{is a nontrivial bipotent
semifield} (cf. Convention \ref{conv:II.5.9}),  and that $(q,b)$ is \textbf{a
fixed quadratic pair} on $V.$ We develop the concept of
``\textit{CS-ratios}'' for pairs of vectors  in
$V_{\an}.$ To a large extent this may be viewed as a kind of
``trigonometry" in supertropical quadratic spaces. \pSkip

We start with a definition where the quadratic pair is not yet
needed.

\begin{defn}\label{defn:II.6.1}
Given $\lm\in R$ and $\mu\in R\sm\{0\}$ the
$\nu$-\bfem{ratio} $\big[\frac{\lm}{\mu}\big]_\nu$ is the
fraction $\frac{e\lm}{e\mu}$ in the semifield $eR=\tG \cup\{0\}.$ Thus for any $\gamma\in R$
\begin{equation}\label{eq:II.6.3}
\bigg[\frac{\lm}{\mu}\bigg]_\nu\cong_\nu\gamma
\dss\Leftrightarrow\gamma\mu\cong_\nu\lm.
\end{equation}
\end{defn}

This slightly funny notation reflects the desire in supertropical
algebra to work as much as possible with tangible elements.
Indeed, if $R$ happens to be a tangible supersemifield (the most
important case for us), we can write all $\nu$-ratios $\ne0$ as
$\big[\frac{\lm}{\mu}\big]_\nu$ with $\lm,\mu\in \tT.$ Then  the $\nu$-ratio $\big[\frac{\lm}{\mu}\big]_\nu$ is characterized by
\begin{equation}\label{eq:II.6.4} \forall \gm \in \tT: \qquad
\bigg[\frac{\lm}{\mu}\bigg]_\nu\cong_\nu\gamma \dss
\Leftrightarrow\lm\cong_\nu\gamma\mu.
\end{equation}

\begin{defn}\label{defn:II.6.2}
Let $x,y\in V_{\an} \sm \{ 0 \}.$ We call
$$\CS(x,y):=\bigg[\frac{b(x,y)^2}{q(x)q(y)}\bigg]_\nu$$
the \bfem{CS-ratio} of the pair of vectors $(x,y)$ (with respect
to $(q,b)).$\end{defn}

\begin{remark}\label{rem:II.6.3}
In case of anisotropic vectors $x,y$, we can  reformulate Definition
\ref{def:II.5.10} as follows: The pair $(x,y)$ is CS iff
$\CS(x,y)< e;$ weakly
 CS iff $\CS(x,y)\le e;$ and almost CS iff $\CS(x,y)<c$ for any
 $c>e$ in $\tG.$\end{remark}

\begin{remark}\label{rem:II.6.4}
Clearly, $\CS(x,y)=\CS(y,x).$ Notice also that
\begin{equation}\label{eq:II.6.6}
\CS(\lm x,\mu y)=\CS(x,y)\end{equation} for any $\lm,\mu\in
R\sm\{0\}.$
\end{remark}

Given vectors $x,y,w\in V_{\an}$, we look for constraints on the
$\CS$-ratio $\CS(x+y,w)$ in terms of $\CS(x,w)$ and $\CS(y,w).$ We
need a lemma from \cite{IzhakianRowen2007SuperTropical}, (in fact a weak
version of it), reproved here for the convenience of the reader.

\begin{lemma}[cf. {\cite[Lemma 3.16.ii]{IzhakianRowen2007SuperTropical}}.]\label{lem:II.6.9} Assume
as before that $eR$ is a  semifield.\\ Let~$a,b,c,d\in R.$
\begin{enumerate}
\item[i)] If $bc\cong_\nu ad,$ then
\begin{equation}\label{eq:II.6.8}
ac+bd=(a+b)(c+d).\end{equation} \item[ii)] If $a\cong_\nu b,$
\bfem{or} $c\cong_\nu d,$ then  still
\begin{equation}\label{eq:II.6.9}
ac+bd\cong_\nu (a+b)(c+d).\end{equation}
\end{enumerate}
 \end{lemma}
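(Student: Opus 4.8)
The plan is to expand both sides using the addition rule \eqref{eq:0.6} in the supertropical semiring $R$, organized by which of the four products $ac, ad, bc, bd$ is $\nu$-dominant. Everything is governed by the ghost values, so the first step is to pass to $eR$: write $\alpha = ea$, etc., noting that in the bipotent semifield $eR$ the sum is just the maximum. Expanding the right-hand side, $e(a+b)(c+d) = (\alpha + \beta)(\gamma + \delta) = \max(\alpha,\beta)\cdot\max(\gamma,\delta) = \max(\alpha\gamma,\alpha\delta,\beta\gamma,\beta\delta)$, while $e(ac+bd) = \max(\alpha\gamma,\beta\delta)$. So at the ghost level part~ii) is the statement that under the hypothesis $\alpha=\beta$ or $\gamma=\delta$ one has $\max(\alpha\gamma,\beta\delta) = \max(\alpha\gamma,\alpha\delta,\beta\gamma,\beta\delta)$; indeed if $\alpha=\beta$ then $\alpha\delta = \beta\delta$ and $\beta\gamma=\alpha\gamma$ are already among the two terms on the left, and symmetrically if $\gamma=\delta$. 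That proves \eqref{eq:II.6.9}.

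For part~i), the hypothesis $bc\cong_\nu ad$ means $\beta\gamma = \alpha\delta$ in $eR$. The plan is to show that the ``missing'' cross term $\alpha\delta = \beta\gamma$ never strictly dominates both $\alpha\gamma$ and $\beta\delta$, so it can be dropped and $\max(\alpha\gamma,\alpha\delta,\beta\gamma,\beta\delta)=\max(\alpha\gamma,\beta\delta)$; moreover, I claim the stronger exact identity \eqref{eq:II.6.8} holds in $R$ itself (not merely after applying $e$). To see this, note $(\alpha\delta)^2 = (\alpha\gamma)(\beta\delta)$ in the totally ordered group $\tG\cup\{0\}$, so $\alpha\delta$ lies between $\alpha\gamma$ and $\beta\delta$; hence $\max(\alpha\gamma,\beta\delta) \geq \alpha\delta = \beta\gamma$, i.e. the term $ad = bc$ is $\nu$-dominated by (the $\nu$-larger of) $ac$ and $bd$. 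Now I distinguish cases according to the relative order of $\alpha\gamma$ and $\beta\delta$. If $\alpha\gamma >_\nu \beta\delta$ then also $\alpha\gamma >_\nu \alpha\delta = \beta\gamma$ (strictly, unless there is equality, handled next), so on the right $(a+b)(c+d)$: here one uses that $a+b$ and $c+d$ each pick out a single tangible/ghost summand unless there is a $\nu$-tie, and one checks $ac$ is the unique dominant product, so $(a+b)(c+d) = ac = ac+bd$. The symmetric case $\beta\delta >_\nu \alpha\gamma$ gives $(a+b)(c+d)=bd=ac+bd$.

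The remaining, and most delicate, case is when several of the four products are $\nu$-equal — in particular when $\alpha\gamma \cong_\nu \beta\delta$, forcing all four ghost products equal (since $\alpha\delta=\beta\gamma$ and $(\alpha\delta)^2 = (\alpha\gamma)(\beta\delta)$). Then on the left $ac+bd$ has ghost value $e\alpha\gamma$, i.e. $ac+bd = e\cdot ac$ by \eqref{eq:0.4}. On the right, $a+b$ has ghost value $\max(\alpha,\beta)$ and $c+d$ has ghost value $\max(\gamma,\delta)$; the product $(a+b)(c+d)$ has ghost value $\max(\alpha,\beta)\max(\gamma,\delta) = \alpha\gamma$ (all four equal), and I must check it is genuinely ghost, i.e. lands in $eR$. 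This is where one may need to split further on whether the individual sums $a+b$, $c+d$ are tangible or ghost, but in every subcase the product of their ghost values equals the common value $\alpha\gamma$ and at least a $\nu$-tie occurs among the four products $ac,ad,bc,bd$ of the expansion, so the expansion of $(a+b)(c+d)$ is forced by \eqref{eq:0.4} to equal $e\,ac = ac+bd$. I expect this bookkeeping of tangible-versus-ghost in the tie case to be the main obstacle; the cleanest route is probably to observe that whenever two of the expanded products are $\nu$-equal, \eqref{eq:0.4} collapses the whole sum to $e$ times that product, and then match both sides' ghost values, which we have already computed to agree.
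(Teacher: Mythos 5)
Your argument is correct, but it is organized differently from the paper's proof. The paper assumes w.l.o.g.\ $a\geq_\nu b$ and splits into three cases on the pair $(a,b)$, namely $a\cong_\nu b\neq 0$, $a>_\nu b$, and $a=b=0$; in each case the hypothesis $bc\cong_\nu ad$ immediately yields the corresponding relation between $c$ and $d$ ($c\cong_\nu d$, resp.\ $c>_\nu d$ or $c=d=0$), after which $a+b$ and $c+d$ are computed as elements and both sides of \eqref{eq:II.6.8} are seen to coincide ($eac$, $ac$, or $0$); part ii) is dismissed as evident, and your max-computation in $eR$ is precisely that evident verification. You instead distribute $(a+b)(c+d)$ into the four products and run a trichotomy on $e\,ac$ versus $e\,bd$, using the identity $(e\,ad)^2=(e\,ac)(e\,bd)$ to see that the cross terms never strictly dominate. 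Both are short elementary case analyses; the paper's choice of comparing $a$ with $b$ (rather than $ac$ with $bd$) is a little smoother for part i), because it pins down the elements $a+b$ and $c+d$ at once, which is exactly what the equality in $R$ (not just in $eR$) requires, whereas your product-level comparison has to recover this by cancellation afterwards.

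Two seams in your write-up should be closed. First, in the case $e\,ac>e\,bd$ you need the strict dominance $e\,ac>e\,ad=e\,bc$, and your parenthetical defers a possible equality to the ``tie case''; but the tie case you treat is $e\,ac=e\,bd$, which does not cover $e\,ac=e\,ad>e\,bd$. That configuration must be (and can be) excluded: $e\,ac>e\,bd\geq 0$ forces $ea\neq 0$, $ec\neq 0$; if $e\,ad=e\,ac$ then $ed=ec$ by cancellation in the semifield $eR$, whence $e\,bc=e\,ad=e\,ac$ gives $eb=ea$ and then $e\,bd=e\,ac$, a contradiction. With strictness one gets $eb<ea$ and $ed<ec$, so $a+b=a$ and $c+d=c$ exactly and the right-hand side is literally $ac$, as your case needs (otherwise, e.g.\ if $ed=ec$, the right-hand side would be ghost while $ac+bd=ac$ could be tangible, and \eqref{eq:II.6.8} would fail). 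Second, your closing remark that a $\nu$-tie among two of the expanded products collapses the whole sum to a ghost is only true when the tied products are $\nu$-dominant; in your tie case all four products are $\nu$-equal (or all zero), so the application is sound, but the statement as phrased is too strong.
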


\begin{proof}
i): We assume without loss of generality  that $a\ge_\nu
b.$

\begin{enumerate}
 \item[1.] \textit{Case}: $a\cong_\nu b\ne0.$ Now $c\cong_\nu d.$ Both
 sides of \eqref{eq:II.6.8}  equal $eac.$

  \item[2.] \textit{Case}: $a>_\nu b.$ Now $bc\cong_\nu ad$ implies that
  $c>_\nu d$ or $c=d=0.$ If $c=d=0$, both sides of \eqref{eq:II.6.8}
  are zero. Otherwise $ac>_\nu bd,$ and both sides of
  \eqref{eq:II.6.8} equal $ac.$

 \item[3.] \textit{Case}: $a=b=0.$ Both sides of \eqref{eq:II.6.8} are zero.
\end{enumerate}

 ii): This is evident.
\end{proof}

We now are ready for a theorem, which states subadditivity of the function $x \mapsto \CS(x,w)$ from $V_{\an} \sm \{0 \}$ to $\tG$ for a fixed $w$, together with refinements of this fact.

\begin{thm}\label{thm:II.6.10}
Let $x,y,w$  be anisotropic vectors in $V.$

\begin{enumerate}
\item[a)] Then
\begin{equation}\label{eq:II.6.10}
\CS(x+y,w)\ds\le\CS(x,w)+\CS(y,w).\end{equation}

\item[b)] If $q(x+y)$ is \bfem{not} $\nu$-equivalent to
$q(x)+q(y)$ and also $\CS(x,w)+\CS(y,w)\ne0,$ then
\begin{equation}\label{eq:II.6.11}
\CS(x+y,w)\ds <\CS(x,w)+\CS(y,w).\end{equation}

\item[c)] Assume that $q(x+y)\cong_\nu q(x)+q(y),$ and that either \pSkip

\begin{enumerate}
\item[c1)] $q(x)\CS(y,w)=q(y)\CS(x,w)$

or

\item[c2)] $\CS(x,w)=\CS(y,w)$

or

\item[c3)] $q(x)\cong_\nu q(y).$

 \end{enumerate} \pSkip
Then
\begin{equation}\label{eq:II.6.12}
\CS(x+y,w)\ds=\CS(x,w)+\CS(y,w).\end{equation}
\end{enumerate}
\end{thm}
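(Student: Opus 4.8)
The plan is to reduce everything to a computation with the numerators and denominators of the CS-ratios and then apply Lemma~\ref{lem:II.6.9}. Since $w$ is fixed and anisotropic, write $q:=q(w)$ and set $a:=b(x,w)$, $c:=b(y,w)$, so that $b(x+y,w)=a+c$ by bilinearity. Abbreviate $P:=q(x)$, $Q:=q(y)$, $S:=q(x+y)=P+Q+b(x,y)$. Then, clearing the common denominator $q(w)^2=q^2$, the inequality \eqref{eq:II.6.10} is equivalent to
\begin{equation*}
\frac{(a+c)^2}{S}\ \le_\nu\ \frac{a^2}{P}+\frac{c^2}{Q},
\end{equation*}
i.e.\ (multiplying through by the nonzero $\nu$-values $PQS$, legitimate since $eR$ is a semifield) to
\begin{equation*}
PQ(a+c)^2\ \le_\nu\ S\bigl(Qa^2+Pc^2\bigr).
\end{equation*}
First I would observe that $(a+c)^2=a^2+c^2$ since squaring is additive in a supertropical semiring (\Qpropref{0.5}), so the left side is $\nu$-equivalent to $PQa^2+PQc^2$. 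On the right side, $S\nu$-dominates each of $P,Q$ only after we know $S\cong_\nu P+Q$ or $S\nu$-exceeds it; in general $S=P+Q+b(x,y)$, so $eS\ge e(P+Q)\ge eP$ and $\ge eQ$. Hence $S(Qa^2+Pc^2)\ge_\nu (P+Q)(Qa^2+Pc^2)\ge_\nu PQa^2+QPc^2$, which is exactly the left side. This proves (a).

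For (b), the hypothesis that $q(x+y)$ is not $\nu$-equivalent to $q(x)+q(y)$ means, by \eqref{eq:0.6}, that $b(x,y)>_\nu q(x)+q(y)$, hence $eS=eb(x,y)>_\nu e(P+Q)\ge_\nu eP,\,eQ$. Therefore the inequality $S(Qa^2+Pc^2)\ge_\nu PQ(a^2+c^2)$ becomes \emph{strict} as soon as $Qa^2+Pc^2\ne_\nu 0$, i.e.\ as soon as $ea^2/P+ec^2/Q\ne 0$ — but that quantity is precisely $e(\CS(x,w)+\CS(y,w))$, which is nonzero by hypothesis. So dividing back by $PQS$ yields \eqref{eq:II.6.11}. (One must check that the division does not re-introduce equality; since $eR$ is totally ordered and cancellative as a group on $\tG$, a strict $\nu$-inequality between nonzero elements is preserved under multiplication by nonzero $\nu$-values.)

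For (c) I would get equality in the chain above. Assume $S\cong_\nu P+Q$. The point where (a) lost sharpness is the step $(P+Q)(Qa^2+Pc^2)\ge_\nu PQa^2+PQc^2$, i.e.\ the step where we replaced a product of sums by a sum of two of its four cross terms, discarding $P^2c^2$ and $Q^2a^2$. Equality holds there exactly when Lemma~\ref{lem:II.6.9}(i) or (ii) applies to the pair of factors $(P,Q)$ and $(Qa^2,Pc^2)$: condition (i) reads $Q\cdot Qa^2\cong_\nu P\cdot Pc^2$, i.e.\ $Q^2a^2\cong_\nu P^2c^2$, i.e.\ $Qa\cong_\nu Pc$ (taking $\nu$-square roots, valid since $eR$ is a group so squaring is injective on $\tG$), which is exactly c1) after squaring and dividing: $q(x)\CS(y,w)=q(y)\CS(x,w)$ means $P\cdot ec^2/Q=Q\cdot ea^2/P$, i.e.\ $P^2c^2\cong_\nu Q^2a^2$. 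Condition (ii) of the lemma needs $P\cong_\nu Q$ (that is c3) or $Qa^2\cong_\nu Pc^2$; the latter, combined with $P\cdot$ and $Q\cdot$ respectively, is equivalent to $\CS(x,w)=\CS(y,w)$, which is c2). In each of the three cases Lemma~\ref{lem:II.6.9} upgrades $\ge_\nu$ to $\cong_\nu$, and combined with $S\cong_\nu P+Q$ this forces equality throughout, giving \eqref{eq:II.6.12}.

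The main obstacle I anticipate is bookkeeping around the three-term sum $S=P+Q+b(x,y)$: in the supertropical world one cannot simply "cancel" $b(x,y)$, and one has to argue case by case which of $P$, $Q$, $b(x,y)$ is $\nu$-dominant, exactly as in the proof of Theorem~\ref{thm:II.5.12}. A clean way around it is to note that only the $\nu$-value $eS=\max_\nu(eP,eQ,eb(x,y))$ ever enters, so the whole argument takes place in the totally ordered group $\tG\cup\{0\}$, where the inequalities above are ordinary inequalities of (products of) real-like quantities; the care needed is just to track when $\ge$ is forced to be $>$ versus allowed to be $=$, which is precisely what the hypotheses in (b) and (c) pin down.
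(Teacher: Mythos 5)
Your proof is correct and is essentially the paper's own argument with the denominators cleared from the start: both hinge on the identity $(\lm+\mu)^2=\lm^2+\mu^2$ applied to $b(x+y,w)^2$, the bound $q(x+y)\geq_\nu q(x)+q(y)$ (strict under the hypothesis of b), an equality under c)), and Lemma~\ref{lem:II.6.9} applied to the product-of-sums versus sum-of-cross-terms, with your conditions on $(P,Q,Qa^2,Pc^2)$ translating exactly into c1)--c3). The only blemishes are cosmetic: the common denominator is $q(w)$, not $q(w)^2$; the quantity $Qa^2+Pc^2$ is $q(w)PQ\,(\CS(x,w)+\CS(y,w))$ up to $\nu$, not $e(\CS(x,w)+\CS(y,w))$ itself (nonvanishing is still equivalent); and ``exactly when'' the lemma applies should just be ``when'', since only sufficiency is needed or available.
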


\begin{proof}
 Let $c:=\CS(x,w),$ $d:=\CS(y,w).$ Thus
$$b(x,w)^2\cong_\nu cq(x)q(w),$$
$$b(y,w)^2\cong_\nu dq(y)q(w).$$
 Adding these two relations and using that
$(\lm+\mu)^2=\lm^2+\mu^2$ for $\lm,\mu\in R,$ we obtain
\begin{equation}\label{eq:II.6.13}
b(x+y,w)^2\cong_\nu[cq(x)+dq(y)]q(w).\end{equation} Putting
$a:=q(x),$ $b:=q(y),$ we trivially have
$$ac+bd\le_\nu(a+b)(c+d),$$
and further
$$a+b\le_\nu q(x)+q(y)+b(x,y)=q(x+y).$$
We conclude that
$$b(x+y,w)^2\le_\nu (a+b)(c+d)q(w)\le_\nu(c+d)q(x+y)q(w).$$
This tells us that $\CS(x+y,w)\le c+d,$  which is claim a) of the
theorem. Moreover, if $a+b<_\nu q(x+y)$ and $c+d\ne0,$ then
$$b(x+y,w)^2<_\nu (c+d)q(x+y)q(w),$$
which is claim b) of the theorem.

Henceforth we assume that $q(x+y)\cong_\nu a+b$ and now have to
prove equation \eqref{eq:II.6.12}. By~\eqref{eq:II.6.13} above the equation
means that
$$ac+bd\cong_\nu (a+b)(c+d).$$
We know by \lemref{lem:II.6.9} that this holds if $ad\cong_\nu bc,$
and also if $a\cong_\nu c$ or $b\cong_\nu d.$ \{We only need the
statement \eqref{eq:II.6.9} in the lemma, leaving the more
interesting assertion \eqref{eq:II.6.8} for later use.\} This proves part c) of
the theorem.\end{proof}

\section{A table of $q$-values, and CS-ratios of pairs of vectors}\label{sec:II.7}

Throughout this  section $V$ is a module over a tangible supersemifield $R$, and $(q,b)$ is a quadratic pair on $V$. We fix a pair of vectors $(x,y) \in V \times V$ and use the abbreviations
\begin{equation}\label{eq:II.7.1}
\al_1 := q(x), \qquad \al_2 := q(y), \qquad \al := b(x,y).
\end{equation}
Our first goal is to compile a table of values of the function $R \times R \to R$, $(\lm,\mu) \mapsto q(\lm x + \mu y)$, using the parameters $\al_1, \al_2, \al.$ We then will use the table  (Propositions \ref{prop:II.7.4} and \ref{prop:II.7.7}) for various purposes here and in the sequels of this paper.

For establishing the table we may replace $V$ by the free module $R \eps_1 + R \eps_2$ with base $\eps_1, \eps_2$, the vector pair $(x,y)$ by $(\eps_1, \eps_2)$, and the quadratic pair $(q,b)$ by the quadratic pair $(\tlq, \tlb)$ on $R \eps_1 + R \eps_2$, obtained by composing $(q,b)$ with the bilinear map $$\chi : R \eps_1 + R \eps_2 \ds \to V$$  with $\chi(\eps_1) = x$, $\chi(\eps_2) = y$, as described in \eqref{eq:II.5.2}--\eqref{eq:II.5.4}. Thus we may assume that $V$ is free with base $x,y$ and $$ q = \begin{bmatrix} \al_1& \al\\   &   \al_2\end{bmatrix}, $$
whenever we feel that this is convenient.

We do not  assume this now, but we extend \cite[Convention 7.10]{QF1} for  the parameters $\al_1, \al_2, \al$ to the present situation in case that $\al_1 \neq 0$  and $\al_2 \neq 0$. Thus we have an element $\elm \in \tT^{1/2}$ with $\al_1 \elm \cong_\nu \al_2, $ and $\elm \in \tT$ if $\al_1 \al_2$ is a $\nu$-square. In the case that $R$ is discrete and $\al_1 \al_2$ is not a $\nu$-square, we furthermore  have elements $\sig, \tau$ in $\tT$, such that $e \tau < e \sig$ and $e \tau, e\sig$ are the elements of $\tG$ nearest to $e \elm$ in the totally ordered set $\tG^{1/2},$ i.e. $\tau <_\nu \elm <_\nu \sig$ and $\tau \cong_\nu \pi \sig.$

We enrich the setting of  \cite[Convention 7.10]{QF1} as follows.

\begin{notation}\label{notat:II.7.1} Assume that $\al_1 \neq 0$, $\al_2 \neq 0$, $\al \neq 0$.
We choose $\zt, \eta \in \tT $ with
  \begin{equation}\label{eq:II.7.2}
  \al \cong_\nu \zt \al_1, \qquad \al_2 \cong_\nu \eta \al,
  \end{equation}
and then have
  \begin{equation}\label{eq:II.7.3}
  \eta \zt  \cong_\nu \elm^2.
  \end{equation}
In the important special case  that all  three  parameters $\al_1, \al_2, \al$ are tangible, we take $\zt = \al \al_1^{-1}$, $\eta =  \al \al_2^{-1}$ and have
\begin{equation}
  \al = \zt \al_1, \qquad \al_2 = \eta \al,
\tag{\ref{eq:II.7.2}$'$}\end{equation}
We then further arrange that
\begin{equation}
  \eta \zt =  \elm^2.
\tag{\ref{eq:II.7.3}$'$}\end{equation}


\end{notation}

\begin{remark}\label{rmk:II.7.2} Clearly $\al^2 \nucong \zt \eta^{-1} \al_1 \al_2$. Thus
\begin{equation}\label{eq:II.7.4}
\al_1 \al_2 \nul \al^2 \dss \iff \eta \nul \zt,
\end{equation}
and then $\eta \nul \elm \nul \zt.$

If in addition $R$ is discrete and $\elm \notin \tT$, then $\al_1 \al_2 \nul \al^2$ implies that
\begin{equation}\label{eq:II.7.5}
\eta \nule \tau \nul \elm \nul \sig \nule \zt,
\end{equation}
since $e\tau$, $e \sig$ are now  the elements of $\tG$ nearest to $e \elm \in \tG^{1/2}.$ If even $\al^2 \nug \pi ^{-1} \al_1 \al_2,$ then
\begin{equation}\label{eq:II.7.6}
\eta \nul \tau \nul \elm \nul \sig \nul \zt.
\end{equation}

\end{remark}

\begin{convention}\label{conv:II.7.3} Assuming again that   $\al_1 \neq 0$, $\al_2 \neq 0$, $\al \neq 0$, we distinguish the following subcases of Cases I-III appearing in  \cite[Convention 7.10]{QF1}.
\begin{description}
  \item[Case I] $\al_1 \al_2$ is a $\nu$-square (i.e. $\elm \in \tT$).
  \begin{description}
  \item[\cass{I}{A}] $\al^2 \nug \al_1 \al_2,$ i.e., $\al \nug \elm \al_1.$
  \item[\cass{I}{B}] $\al^2 \nule \al_1 \al_2.$
\end{description} \pSkip

  \item[Case II] $R$ is dense, and $\al_1 \al_2$ is not a $\nu$-square (hence $\elm \notin \tT$).
  \begin{description}
  \item[\cass{II}{A}] $\al^2 \nug \al_1 \al_2,$ i.e., $\al \nug \elm \al_1.$
  \item[\cass{II}{B}] $\al^2 \nul \al_1 \al_2.$
\end{description} \pSkip

 \item[Case III] $R$ is discrete, and $\al_1 \al_2$ is not a $\nu$-square (hence $\elm \notin \tT$).
    \begin{description}
  \item[\cass{III}{A}] $\al^2 \nug \pi^{-1}\al_1 \al_2,$ i.e., $\eta \nul \tau \nul \sig \nul \zt.$
  \item[\cass{III}{B}]  $\al^2 \nucong  \pi^{-1}\al_1 \al_2,$ i.e., $\eta \nucong \tau,  \sig \nucong \zt.$
  \item[\cass{III}{C}] $\al^2 \nul \al_1 \al_2.$
\end{description}

\end{description}

\end{convention}

\begin{prop}\label{prop:II.7.4} Assume that $\al_1, \al_2, \al$ are nonzero. Let $\lm, \mu \in R$, not both zero.
\begin{enumerate} \eroman
  \item In Cases \cass{I}{A}, \cass{II}{A}, \cass{III}{A}
  \begin{equation}
  \label{eq:II.7.7}
  q(\lm x + \mu y ) = \left\{
    \begin{array}{lll}
      \lm^2 \al_1 & &  \lm \nug \zt \mu, \\[1mm]
      e \lm^2 \al_1 = e \lm \mu \al  &  & \lm \nucong \zt \mu, \\[1mm]
      \lm \mu \al  &  & \eta \mu \nul \lm \nul \zt \mu, \\[1mm]
       e \mu^2 \al_2 = e \lm \mu \al  &  &  \lm \nucong \eta \mu, \\[1mm]
      \mu^2 \al_2 &  & \lm \nul \eta \mu. \\
    \end{array}
  \right.
    \end{equation} \pSkip

  \item  In  Case \cass{III}{B} (now $\zt \nucong \sig,$ $\eta \nucong \tau$)

  \begin{equation}\label{eq:II.7.8}
   q(\lm x + \mu y ) = \left\{
    \begin{array}{lll}
      \lm^2 \al_1 & &  \lm \nug \sig \mu, \\[1mm]
      e \lm^2 \al_1 = e \lm \mu \al  &  & \lm \nucong \sig \mu, \\[1mm]
       e \mu^2 \al_2 = e \lm \mu \al  &  &  \lm \nucong \tau \mu, \\[1mm]
      \mu^2 \al_2 &  & \lm \nul \tau \mu. \\
    \end{array}
  \right.
  \end{equation} \pSkip

  \item In  Case  \cass{I}{B} (hence  $\elm \in \tT$)

  \begin{equation}\label{eq:II.7.9}
   q(\lm x + \mu y ) = \left\{
    \begin{array}{lll}
      \lm^2 \al_1 & &  \lm \nug \elm \mu , \\[1mm]
       e \lm^2 \al_1 = e \mu^2 \al_2  &  &  \lm \nucong \elm \mu, \\[1mm]
      \mu^2 \al_2 &  & \lm \nul \elm \mu. \\
    \end{array}
  \right.
  \end{equation} \pSkip
  \item In  Cases  \cass{II}{B},  \cass{III}{C} (hence $\elm \notin \tT$)

  \begin{equation}\label{eq:II.7.10}
   q(\lm x + \mu y ) = \left\{
    \begin{array}{lll}
      \lm^2 \al_1 & &  \lm \nug \elm \mu, \\[1mm]
       \mu^2 \al_2 &  & \lm \nul \elm \mu. \\
    \end{array}
  \right.
  \end{equation} \pSkip
\end{enumerate}

\end{prop}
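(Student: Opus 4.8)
The plan is to reduce, once and for all, to the free module $R\eps_1 + R\eps_2$ with $q = \left[\begin{smallmatrix}\al_1 & \al\\ & \al_2\end{smallmatrix}\right]$, as sanctioned in the text preceding Convention~\ref{conv:II.7.3}; Proposition~\ref{prop:II.5.4} guarantees that quasilinearity on $Rx + Ry$ corresponds to quasilinearity of $\tlq$, and more generally the value $q(\lm x + \mu y)$ is $\tlq(\lm\eps_1 + \mu\eps_2)$, so nothing is lost. Then the basic computation is simply
\begin{equation*}
q(\lm x + \mu y) = \lm^2 q(x) + \mu^2 q(y) + \lm\mu\, b(x,y) = \lm^2\al_1 + \mu^2\al_2 + \lm\mu\,\al,
\end{equation*}
using $q(ax) = a^2 q(x)$ and $b(\lm x, \mu y) = \lm\mu\, b(x,y)$ together with \eqref{eq:0.2}. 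Everything now hinges on which of the three terms $\lm^2\al_1$, $\mu^2\al_2$, $\lm\mu\,\al$ dominates $\nu$-wise, and \eqref{eq:0.6} tells us that the sum equals the strictly $\nu$-largest term if there is one, and equals $e$ times the common value of the $\nu$-maximal terms otherwise. So the proof is a matter of organizing the case analysis cleanly.

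The key step is to translate ``which term is $\nu$-largest'' into the conditions appearing in \eqref{eq:II.7.7}--\eqref{eq:II.7.10}. Here is where Notation~\ref{notat:II.7.1} pays off: since $\al \nucong \zt\al_1$ we get $\lm^2\al_1 \nuge \lm\mu\,\al$ iff $\lm \nuge \zt\mu$; and since $\al_2 \nucong \eta\al$ we get $\lm\mu\,\al \nuge \mu^2\al_2$ iff $\lm \nuge \eta\mu$. Thus the three terms compare according to the position of $\lm/\mu$ relative to the two thresholds $\eta\mu$ and $\zt\mu$. In the ``A'' cases we are in Remark~\ref{rmk:II.7.2}'s regime $\eta \nul \elm \nul \zt$, so these two thresholds are genuinely distinct (and in Case~\cass{III}{A} they are separated by more than the prime $\pi$, so $\tau\mu$ and $\sig\mu$ slip strictly between, though for the $q$-value only $\eta\mu$ and $\zt\mu$ matter), and one reads off the five-line table \eqref{eq:II.7.7}: $\lm$ strictly above $\zt\mu$ gives $\lm^2\al_1$; equal to $\zt\mu$ gives the ghost $e\lm^2\al_1 = e\lm\mu\,\al$; strictly between gives $\lm\mu\,\al$; equal to $\eta\mu$ gives the ghost $e\mu^2\al_2 = e\lm\mu\,\al$; strictly below gives $\mu^2\al_2$. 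One should double-check the equalities $e\lm^2\al_1 = e\lm\mu\,\al$ on the boundary: this is exactly $\lm\al_1 \nucong \mu\al$, i.e.\ $\lm \nucong \zt\mu$, which is the hypothesis of that line.

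The remaining parts are degenerations of the same picture. In Case~\cass{III}{B}, $\al^2 \nucong \pi^{-1}\al_1\al_2$ forces $\zt \nucong \sig$ and $\eta \nucong \tau$ with $\sig\mu$ and $\tau\mu$ now $\nu$-adjacent in $\tG^{1/2}$-terms, collapsing the ``strictly between'' line: no $\lm$ can lie strictly $\nu$-between $\eta\mu$ and $\zt\mu$ when these are a single prime apart after extracting square roots, yielding the four-line table \eqref{eq:II.7.8}. In Cases~\cass{I}{B} and \cass{II}{B}/\cass{III}{C} we have $\al^2 \nule \al_1\al_2$, i.e.\ the cross term $\lm\mu\,\al$ is always $\nu$-dominated by $\max(\lm^2\al_1, \mu^2\al_2)$ (indeed $\al \nule \elm\al_1$ and $\al_2 \nule \elm\al$ give $\lm^2\al_1 \cdot \mu^2\al_2 \nuge (\lm\mu\,\al)^2$, so the cross term never wins), so only the comparison of $\lm^2\al_1$ with $\mu^2\al_2$ survives, governed by $\lm$ versus $\elm\mu$ since $\al_1\elm^2 \nucong \al_1\al_2$; when $\elm \in \tT$ (Case~\cass{I}{B}) equality $\lm \nucong \elm\mu$ is attainable and produces the ghost middle line $e\lm^2\al_1 = e\mu^2\al_2$ of \eqref{eq:II.7.9}, whereas when $\elm \notin \tT$ no tangible $\lm,\mu$ give $\lm \nucong \elm\mu$, so that line disappears and we get the two-line table \eqref{eq:II.7.10}. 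The main obstacle is none of the individual comparisons — each is a one-line application of \eqref{eq:0.6} — but rather the bookkeeping: making sure that on every boundary line the two displayed expressions really are $\nu$-equal (so that writing either one is legitimate), and that the case hypotheses of Convention~\ref{conv:II.7.3} are invoked precisely where needed to merge or split lines. I would handle this by proving \eqref{eq:II.7.7} in full detail as the generic case and then indicating the (routine) collapses for the other three parts.
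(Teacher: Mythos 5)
Your proposal is correct, and for Cases \cass{I}{A}, \cass{II}{A}, \cass{III}{A} and \cass{III}{B} it is essentially the paper's own argument: expand $q(\lm x+\mu y)=\lm^2\al_1+\lm\mu\al+\mu^2\al_2$ via the companion identity and inspect which terms are $\nu$-dominant, using $\al\nucong\zt\al_1$, $\al_2\nucong\eta\al$, and in Case \cass{III}{B} the fact that $e\tau,e\sig$ are adjacent in $\tG$ (their ratio is $e\pi^{-1}=c_0$; no square roots are needed for this collapse). Where you genuinely diverge is in Cases \cass{I}{B}, \cass{II}{B}, \cass{III}{C}: the paper simply quotes Theorem \ref{thm:II.5.5} to conclude that $q$ is quasilinear on $Rx+Ry$, so $q(\lm x+\mu y)=\lm^2\al_1+\mu^2\al_2$ with no cross term at all and \eqref{eq:II.7.9}, \eqref{eq:II.7.10} are immediate, whereas you keep the cross term and argue from $(\lm\mu\al)^2\nule\lm^2\al_1\cdot\mu^2\al_2$ that it ``never wins''. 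Your route is more self-contained (it avoids the machinery behind Theorem \ref{thm:II.5.5}), but as written it has a small hole: knowing only $\lm\mu\al\nule\max$ does not yet give the non-boundary lines of \eqref{eq:II.7.9}, \eqref{eq:II.7.10}, because a cross term $\nu$-equal to a strictly dominant diagonal term would turn the value into its ghost. You need to add the observation that such a tie is impossible: if $\lm\mu\al\nucong\lm^2\al_1\nug\mu^2\al_2$, then squaring gives $\lm^2\mu^2\al^2\nug\lm^2\mu^2\al_1\al_2$, contradicting $\al^2\nule\al_1\al_2$ (and symmetrically with the roles of $\lm^2\al_1$ and $\mu^2\al_2$ exchanged); hence the cross term can tie the maximum only when all three terms are $\nu$-equivalent, in which case the ghost value is already the one listed. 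Note also the slip ``$\al_2\nule\elm\al$'' in your parenthetical: the correct inequality is $\elm\al\nule\al_2$, although the product inequality you actually use is nothing but the case hypothesis $\al^2\nule\al_1\al_2$, so the conclusion stands.
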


\begin{proof} In  Cases  \cass{I}{B},  \cass{II}{B},  \cass{III}{C} the form $q$ is quasilinear  an $Rx + Ry,$ as observed in Theorem~\ref{thm:II.5.5}, and hence
$$ q(\lm x + \mu y) = \lm^2 \al_1 + \mu^2 \al_2,$$
and the claims in \eqref{eq:II.7.9}, \eqref{eq:II.7.10} are immediate. In the other cases we have $\al^2 \nug \al_1 \al_2,$ $\al \nucong \zt \al_1,$ and
$$ q(\lm x + \mu y ) = \lm^2 \al_1 + \lm \mu \al + \mu^2 \al_2.$$
Now an easy inspection, which of the three terms on the right are $\nu$-dominant, gives us \eqref{eq:II.7.7} and  \eqref{eq:II.7.8}.
\end{proof}

It remains to handle the degenerate situation where at least one of the parameters $\al_1,$ $\al_2$, and $\al$ is zero.
\begin{convention}\label{conv:II.7.6} We distinguish the following cases, also for later use.
  \begin{description}
   \item[Case IV] $\al_1 \neq 0,$ $\al_2 = 0$, $\al \neq  0.$ \pSkip
  \item[Case V] $\al_1= \al_2 = 0$, $\al \neq 0.$ \pSkip
  \item[Case VI] $\al_1 \neq  0,$ $\al_2 \neq 0$, $\al = 0.$ \pSkip
  \item[Case VII] $\al_1  = \al_2 =  \al = 0.$ \pSkip
  \end{description}
\end{convention}

\begin{notations}\label{notat:II.7.6}
  In Case IV we choose $\zt \in \tT$ with $\al \nucong \zt \al_1.$ In the subcase that both $\al_1, \al $ are tangible we take $\zt = \al \al_1^{-1}$ and then have $\al = \zt \al_1.$
\end{notations}

Notice that the pair $(x,y)$ is excessive in Cases IV, V, while  $q$ is quasilinear on $R x + R y$ in the other two cases.

Now the following is obvious.

\begin{prop}\label{prop:II.7.7}
Let $\lm , \mu \in R,$
 not both zero.

\begin{enumerate} \eroman
  \item In Case IV
  \begin{equation}\label{eq:II.7.11}
   q(\lm x + \mu y ) = \left\{
    \begin{array}{lll}
      \lm^2 \al_1 & \text{if} &  \lm \nug \zt \mu,  \\[1mm]
      e \lm^2 \al_1  = e \lm \mu  \al & \text{if}  &  \lm \nucong \zt \mu , \\[1mm]
       \lm \mu \al  & \text{if}  & \lm \nul \zt \mu . \\
    \end{array}
  \right.
  \end{equation} \pSkip

  \item In Case V

  \begin{equation} \label{eq:II.7.12} q( \lm x + \mu y )  = \lm \mu \al.
  \end{equation} \pSkip

  \item In Case VI \eqref{eq:II.7.9} holds if $\elm \in \tT,$ and \eqref{eq:II.7.10} holds if $\elm \notin \tT$  (as in Cases  \cass{I}{B} resp.  \cass{II}{B},  \cass{III}{C}). \pSkip

  \item In Case VII \ $q(\lm x + \mu y) = 0.$

\end{enumerate}

\end{prop}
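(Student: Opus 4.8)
The plan is to reduce everything to the fundamental expansion
\begin{equation*}
q(\lm x+\mu y)=\lm^2\al_1+\lm\mu\al+\mu^2\al_2,
\end{equation*}
which holds for any quadratic pair and all $\lm,\mu\in R$ by two applications of \eqref{eq:0.1} and \eqref{eq:0.2}. By the supertropical addition rule \eqref{eq:0.6}, a sum of (here at most three) summands equals the unique summand of strictly largest $\nu$-value $e(\cdot)$ when one exists, and equals $e$ times that largest $\nu$-value when the maximum is attained more than once. So in each of the Cases IV--VII it will remain only to rank, by $\nu$-value, the nonzero terms among $\lm^2\al_1$, $\lm\mu\al$, $\mu^2\al_2$ --- a cancellation computation inside the bipotent semifield $eR$.

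In Case IV ($\al_2=0$) only the first two terms survive; I would use $\al\nucong\zt\al_1$ from Notations~\ref{notat:II.7.6} to rewrite $\lm\mu\al\nucong\lm\mu\zt\al_1$ and then cancel the invertible element $e\al_1\neq0$ in $eR$, so that comparing $e\lm^2\al_1$ with $e\lm\mu\al$ reduces to comparing $\lm$ with $\zt\mu$ in the $\nu$-preorder. The three outcomes $\lm\nug\zt\mu$, $\lm\nucong\zt\mu$, $\lm\nul\zt\mu$ then give the three lines of \eqref{eq:II.7.11}, the middle one because both $\lm$ and $\mu$ are forced to be nonzero there, so that $e\lm^2\al_1=e\lm\mu\al$ is indeed the common $\nu$-value. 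In Case V ($\al_1=\al_2=0$) only the single term $\lm\mu\al$ remains, which is \eqref{eq:II.7.12}.

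In Case VI we have $\al=b(x,y)=0$, hence $q$ restricts to a quasilinear form on $Rx+Ry$ and $q(\lm x+\mu y)=\lm^2\al_1+\mu^2\al_2$; this is exactly the expression already analysed in the proof of \propref{prop:II.7.4} for Cases \cass{I}{B}, \cass{II}{B} and \cass{III}{C}, so \eqref{eq:II.7.9} holds when $\elm\in\tT$ and \eqref{eq:II.7.10} when $\elm\notin\tT$. In Case VII all three terms vanish, so $q(\lm x+\mu y)=0$. I expect no real obstacle; the only point requiring a moment's care is the boundary behaviour when $\lm=0$ or $\mu=0$, but since $(\lm,\mu)\neq(0,0)$ and $\zt,\elm,\al$ are nonzero in the cases where they occur, each such degenerate pair is correctly absorbed by one of the stated strict inequalities --- which is why, as the text says, the result is essentially immediate.
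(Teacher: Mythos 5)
Your proposal is correct and follows essentially the route the paper intends: the paper treats the proposition as "obvious" precisely because, as in the proof of Proposition \ref{prop:II.7.4}, everything reduces to the expansion $q(\lm x+\mu y)=\lm^2\al_1+\lm\mu\al+\mu^2\al_2$ and an inspection of which surviving terms are $\nu$-dominant, with Case VI handled exactly as Cases \cass{I}{B}, \cass{II}{B}, \cass{III}{C}. Your extra care about the degenerate pairs $\lm=0$ or $\mu=0$ is a legitimate (and correctly resolved) detail the paper leaves implicit.
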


\begin{remark}\label{rmk:II.7.8}
The tables in Proposition \ref{prop:II.7.4} and \ref{prop:II.7.7} reveal that (for fixed $x,y$) the $\nu$-value of $q(\lm x + \mu y)$ only depends on the $\nu$-values of $\lm$ and $\mu$.
This is conceptually evident from the equation
$$ eq(\lm x + \mu y) = q ((e\lm)x + (e\mu)y).$$
\end{remark}

We now use these tables to compute the CS-ratios of pairs of vectors in $R x+ R y$ in the case that the pair $(x,y)$ is free and excessive.

\begin{convention}\label{conv:II.7.8}
Assume that the submodule $R x + R y$ of $V$ is free with base $x,y,$ and that the pair $(x,y)$  is excessive. Let $x' , y' \in Rx + Ry$ be given with $x' \neq 0,$ $y' \neq 0$, $x' \neq y'.$  We write
\begin{equation}\label{eq:II.7.13}
x' = \lm_1 x + \mu_1 y, \qquad y' = \lm_2 x + \mu_2 y,
\end{equation}
with $\lm_i, \mu_i \in R.$ We
exclude the (trivial) case that $\tG x' = \tG y'$ and
assume without loss of generality that
\begin{equation}\label{eq:II.7.14}
\lm_1 \mu_2  \ds {>_\nu} \lm_2 \mu_1,
\end{equation}
which for  $\mu_1 \neq 0$ means that $\big[\frac{\lm_1}{\mu_1}\big]_\nu > \big[\frac{\lm_2}{\mu_2}\big]_\nu$. (Recall Definition \ref{defn:II.6.1}.)
Since  the pair $(x,y)$ is free and excessive, the symmetric bilinear form $b$ on $Rx + Ry$ with
\begin{equation}\label{eq:II.7.15}
b(x,x) = b(y,y) = 0, \qquad b(x,y) = \al,
\end{equation}
is a companion of $q | Rx + Ry.$
\end{convention}

\begin{problem}\label{prob:II.7.9} $ $
\begin{enumerate}
  \item[a)] Compute the CS-ratio $\CS(x',y')$ with respect to $(q,b)$ in terms of $\al_1, \al_2, \al$ and the $\lm_i, \mu_j,$ if both $x',y'$ are anisotropic. \pSkip

  \item[b)] Decide which pairs $(x',y' )$ are again excessive.
\end{enumerate}
\end{problem}

We know by Corollary \ref{cor:II.5.7} that in case that $(x',y')$ is \emph{not} excessive, the quadratic  form $q$  is quasilinear on $Rx' + Ry'.$  \emph{We then  say in brief that the pair $(x', y')$ is quasilinear.}

In the following we write $\cong$ instead of $\cong_\nu$ and $\big[\frac{\lm}{\mu} \big]$ instead of $\big[\frac{\lm}{\mu} \big]_\nu$, for short.

As a consequence of \eqref{eq:II.7.14}  and \eqref{eq:II.7.15} we have
\begin{equation}\label{eq:II.7.16}
b(x', y') = \lm_1 \mu_2 \al.
\end{equation}
Since $(x',y')$ is excessive, we are in one of the Cases IA, IIA, IIIA, IIIB, IV, V, and in Case~IIIB if at least one of the elements $\al_1,$ $\al_2$   is tangible (cf. Definition \ref{defn:II.5.6}).

We postpone the degenerate Cases IV and V, and thus assume now that $\al_1 \neq 0,$   $\al_2 \neq 0,$ and $\al^2 \nul \al_1 \al_2.$ We constantly use the table in Proposition \ref{prop:II.7.4}, based on Notation \ref{notat:II.7.1}, and rely heavily  on Theorem \ref{thm:II.5.5}.

Before entering  systematic computations, we warm up with some observations. We have
\begin{equation}\label{eq:II.7.17}
\CS(x,y) = \bigg[ \frac{\al^2}{ \al_1 \al_2} \bigg]  = \bigg[ \frac{\zt^ 2 \al_1^2 }{ \zt \eta \al_1^2} \bigg] = \bigg[ \frac{\zt}{ \eta} \bigg].
\end{equation}
The vectors
\begin{equation}\label{eq:II.7.18}
z := \zt x + y, \qquad w := \eta x + y
\end{equation}
will play a prominent role. We have $$q(z) = e \zt^2 \al_1, \quad q(w) = e \al_2, \quad b(z,w) = \zt \al = \zt^2 \al_1;$$ hence
\begin{equation}\label{eq:II.7.19}
\CS(z,w) = \bigg[ \frac{\zt^4 \al_1^2}{ \zt^2 \al_1 \al_2} \bigg]  = \bigg[ \frac{\zt^ 2 \al_1^2 }{ \zt \eta \al_1^2} \bigg] = \bigg[ \frac{\zt}{ \eta} \bigg] \nug 1.
\end{equation}

We conclude that the pair $(z,w)$ is excessive, except in  Case IIIB.  Then $(z,w)$ is quasilinear, since both $q(z),$ $q(w)$ are ghost. On the other hand
\begin{equation}\label{eq:II.7.20}
\CS(x,z) = \bigg[ \frac{\al^2}{ \al_1 \cdot \zt^2 \al_1} \bigg]  \nucong 1,
\end{equation}
\begin{equation}\label{eq:II.7.21}
\CS(w,y) = \bigg[ \frac{\eta ^2 \al^2}{ \al_1 \cdot \al_2} \bigg]  \nucong 1.
\end{equation}
Thus both pairs $(x,z),$ $(w,y)$ are quasilinear.

The CS-values \eqref{eq:II.7.17} and \eqref{eq:II.7.19}--\eqref{eq:II.7.21} make it plausible that
\begin{equation}\label{eq:II.7.22}
\CS(x',y') \leq \bigg[ \frac{\zt}{\eta} \bigg] = \CS(x,y)
\end{equation}
for all pairs $(x',y')$ in $(Rx + Ry) \sm \{ 0 \} .$  This is indeed true, as we will verify below.

\pSkip

We are ready to compute  $\CS(x', y')$ in all cases.
\begin{enumerate}
  \item[a)] Assume that $\lm_2 \nuge \zt \mu_2$. Now $q(x') \cong \lm_1^2 \al_1$,
  $q(y') \cong \lm_2^2 \al$, and hence
\begin{equation}\label{eq:II.7.23}
\CS(x',y') = \bigg[ \frac{\lm_1 ^2  \mu_2 ^2 \al^2}{ \lm_1^2 \al_1 \lm_2^2 \al } \bigg]  =
\bigg[ \frac{ \mu_2 ^2 \zt ^2}{\lm_2^2 } \bigg] \nule 1.
\end{equation}
 N.B. This is smaller than $1^\nu$ if $\lm_2 \nug \zt \mu_2.$
\pSkip

  \item[b)] Assume that $\lm_1 \nule \eta \mu_1.$ We obtain
\begin{equation}\label{eq:II.7.24}
\CS(x',y') = \bigg[ \frac{\lm_1 ^2  \mu_2 ^2 \al^2}{ \mu_1^2 \al_1 \mu_2^2 \al_2 } \bigg]  =
\bigg[ \frac{ \lm_1 ^2 \al ^2}{\mu_1^2 \al_2^2 } \bigg] =
\bigg[ \frac{ \lm_1 ^2 }{\mu_1^2 \eta^2 } \bigg] \nule 1.
\end{equation}
This can also be deduced from a) by interchanging $x,y$ and $x',y'$.
\pSkip

  \item[c)] Assume that $\lm_1 \nule \zt \mu_1,$ $\lm_2 \nuge \eta \mu_2.$\footnote{Perhaps the most interesting case!}
  Now   $q(x') \cong \lm_1 \mu_1 \al , $  $q(y') \cong \lm_2 \mu_2 \al,$ and hence
\begin{equation}\label{eq:II.7.25}
\CS(x',y') = \bigg[ \frac{\lm_1 ^2  \mu_2 ^2 \al^2}{\lm_1 \lm_2  \mu_1  \mu_2 \al^2 } \bigg]  =
\bigg[ \frac{ \lm_1 \mu_2 }{\mu_1 \lm_2 } \bigg] \nug 1.
\end{equation}
Thus $(x',y')$ is excessive except in Case IIIB. Then there exist no $\nu$-values in $R$ strictly between $\zt$ and $\eta$. Hence $\CS(x', y') = \CS(z,w),$ and we know from the above that $(x',y')$ is quasilinear.
\pSkip

  \item[d)] $\lm_1 \nuge \zt \mu_1$, $\zt \mu_2 \nuge \lm_2 \nuge \eta \mu_2.$  We obtain in the same way
\begin{equation}\label{eq:II.7.26}
\CS(x',y') = \bigg[ \frac{\zt  \mu_2 }{ \lm_2} \bigg]  = \CS(z,y').
\end{equation}
\pSkip

  \item[e)] $\zt \mu_1 \nuge \lm_1 \nuge \eta \mu_1,$ $\lm_2 \nule \eta \mu_2$. We obtain
\begin{equation}\label{eq:II.7.27}
\CS(x',y') = \bigg[ \frac{\lm_1 }{ \mu_1 \eta} \bigg]  = \CS(x',w).
\end{equation}
Again we can also deduce e) from d) by interchanging $x,y$ and $x',y'$.
\pSkip

  \item[f)] The degenerate Case  IV, where $\al_1 \neq 0,$ $\al_2 = 0,$ and only one parameter $\zt$ is present. We obtain
\begin{equation}\label{eq:II.7.28}
\CS(x',y') =
\left\{
  \begin{array}{ll}
    \big[ \frac{  \mu_2 ^2 \zt^2}{ \lm_2^2} \big]  \nule 1  & \text{if } \lm_2 \nuge \zt \mu_2; \\[2mm ]
    \big[ \frac{  \mu_2  \zt}{ \lm_2} \big]  \nug 1  & \text{if } \lm_1 \nuge \zt \mu_1 \text{ and } \lm_2 \nul \zt \mu_2 ; \\[2mm]
    \big[ \frac{  \lm_1 \mu_2}{ \lm_2  \mu_1 } \big]  \nug 1  & \text{if } \lm_1 \nule \zt \mu_1.  \\
  \end{array}
\right.
\end{equation}
\pSkip

  \item[g)] $\al_1 = \al_2 = 0,$  $\al \neq 0$ (Case V). Now
\begin{equation}\label{eq:II.7.29}
\CS(x',y') = \bigg[ \frac{\lm_1 ^2  \mu_2 ^2 \al^2}{\lm_1 \mu_1 \al  \cdot \lm_2    \mu_2 \al  } \bigg]  =
\bigg[ \frac{ \lm_1 \mu_2 }{\mu_1 \lm_2 } \bigg] \nug 1.
\end{equation}
\end{enumerate}
Thus every pair $(x',y')$ is excessive.
\pSkip

Equations \eqref{eq:II.7.28} and \eqref{eq:II.7.29} may be viewed as resulting from \eqref{eq:II.7.23}--\eqref{eq:II.7.27} by putting $\eta = 0$ and $\eta = 0, \zt = \infty,$ respectively. \pSkip

The solution just  obtained for  Problem \ref{prob:II.7.9}.b remains valid if the pair $(x,y)$ is not necessarily free, since we can choose a linear map
$\chi : R \eps_1 + R \eps_2 \to V$  with $\chi(\eps_1) = z$, $\chi(\eps_2) = y$ as in \eqref{eq:II.5.2}, define $\tlq$, $\tlb$ on the free module   $R \eps_1 + R \eps_2$ as in \eqref{eq:II.5.3}, where now $b$ is any companion of $q$ and then apply Proposition \ref{prop:II.5.4}. \{Notice that the value $\al = b(x,y)$ does not depend on the choice of $b$, since $(x,y)$ is excessive.\} We thus arrive at the following theorem.

\begin{thm}\label{thm:II.7.10} Continuing with Notations \ref{notat:II.7.1} and \ref{notat:II.7.6}, we assume that the pair
$(x,y)$ is excessive, and without loss of generality, that either $\al_1 \neq 0$ or $\al_1 = \al_2 = 0.$ Let
$x' = \lm_1 x + \mu_1 y$, $y' = \lm_2 x + \mu_2 y$ with $\lm_i, \mu_i \in R$ and $\lm_1 \mu_2 \nug \lm_2 \mu_1.$
Then $q$ is quasilinear on $Rx' + Ry'$ precisely in the following three cases.
\begin{enumerate}
  \item[1)] $\al_1 \neq 0, \lm_2 \nuge \zt \mu_2;$\pSkip
  \item[2)] $\al_1 \neq 0, \al_2 \neq 0, \lm_1 \nule \eta \mu_1;$ \pSkip
  \item[3)] $\al_1 \neq 0, \al_2 \neq 0,$ $R$ discrete,   $\lm_1 \nucong  \zt \mu_1, \lm_2 \nucong \eta \mu_2.$
\end{enumerate}
Otherwise $(x',y')$ is excessive.
\end{thm}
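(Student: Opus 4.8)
\textbf{Proof strategy for Theorem \ref{thm:II.7.10}.}

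The plan is to reduce everything to the free case already handled by the computations (a)--(g) preceding the statement, and then to translate the case list obtained there into the three conditions of the theorem. First I would argue the reduction to the free case: if the submodule $Rx + Ry$ of $V$ is not free with base $x,y$, I pick an auxiliary free module $R\eps_1 + R\eps_2$ and the linear map $\chi$ of \eqref{eq:II.5.2} with $\chi(\eps_1) = x$, $\chi(\eps_2) = y$, pull back a fixed companion $b$ of $q$ to a companion $\tlb$ of $\tlq := q \circ \chi$ as in Proposition \ref{prop:II.5.3}, and observe that by Proposition \ref{prop:II.5.4} the quadratic form $q$ is quasilinear on $Rx' + Ry'$ iff $\tlq$ is quasilinear on the corresponding submodule $R\eps_1' + R\eps_2'$ of $R\eps_1 + R\eps_2$. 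Here I must also note (as remarked just before the theorem) that since $(x,y)$ is excessive, the value $\al = b(x,y)$ is independent of the choice of $b$ (Corollary \ref{cor:II.5.7}, Remark \ref{rem:II.5.14}), so the parameters $\al_1, \al_2, \al$ and hence the auxiliary quadratic form $\tlq = \begin{bmatrix}\al_1 & \al \\ & \al_2\end{bmatrix}$ are well-defined. The case "$\eps_1' = \eps_2'$", excluded by the hypothesis $\tG x' = \tG y'$ in Convention \ref{conv:II.7.8}, contributes nothing since then $Rx' + Ry'$ has a single generator up to $\nu$-equivalence and $q$ is trivially quasilinear there; this matches the convention $\lm_1\mu_2 \nug \lm_2\mu_1$.

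With the reduction in place, I would invoke the explicit CS-ratio computations (a)--(g). By Corollary \ref{cor:II.5.7}, $(x',y')$ is quasilinear iff it is not excessive, and by the criterion of Theorem \ref{thm:II.5.5} (or Remark \ref{rem:II.6.3}) excessiveness is governed by whether $\CS(x',y')$ is $\le e$ (or $= e$), or whether $\CS(x',y') \nucong c_0$ with $q(x')$ or $q(y')$ tangible. So I go through the five regions (a)--(e) covering Case $\al_1 \neq 0$, $\al_2 \neq 0$, $\al^2 \nul \al_1\al_2$: in (a) we got $\CS(x',y') = [\mu_2^2\zt^2/\lm_2^2] \nule 1$, which under $\lm_2 \nuge \zt\mu_2$ is $\le e$, so $(x',y')$ is (weakly CS, hence) quasilinear -- this is condition 1). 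In (b) we got $\CS(x',y') = [\lm_1^2/\mu_1^2\eta^2] \nule 1$ under $\lm_1 \nule \eta\mu_1$, giving condition 2). In (c), under $\lm_1 \nule \zt\mu_1$ and $\lm_2 \nuge \eta\mu_2$, the ratio strictly exceeds $1$ (Case IIIB excepted), so $(x',y')$ is excessive; the only exception is $R$ discrete with no $\nu$-value strictly between $\eta$ and $\zt$, which forces $\lm_1 \nucong \zt\mu_1$, $\lm_2 \nucong \eta\mu_2$ and $\CS(x',y') = \CS(z,w) = c_0$ with $q(z), q(w)$ both ghost, hence quasilinear -- this is condition 3). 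In regions (d) and (e) the ratios $[\zt\mu_2/\lm_2]$ and $[\lm_1/\mu_1\eta]$ are $\nug 1$, so $(x',y')$ is excessive there (again a discrete sub-subcase may land at $c_0$, but then the relevant $q$-value is tangible -- I should check this point carefully -- so it stays excessive). The degenerate Cases IV ($\al_2 = 0$) and V ($\al_1 = \al_2 = 0$) are read off from (f), (g): in Case V every pair is excessive, absorbed by "$\al_1 = \al_2 = 0$" and the fact that none of 1)--3) then triggers; in Case IV the first line of \eqref{eq:II.7.28} gives quasilinearity under $\lm_2 \nuge \zt\mu_2$, matching condition 1) with $\al_2 = 0$ not required, and the other two lines give excessiveness, matching the exclusion of 2) and 3) (which require $\al_2 \neq 0$).

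Finally I would assemble: conditions 1)--3) are exactly the union of the quasilinear sub-regions across the case analysis, and their complement (within the region $\lm_1\mu_2 \nug \lm_2\mu_1$, $x',y'$ both nonzero and distinct) is covered by the excessive sub-regions, which by Proposition \ref{prop:II.7.4}'s exhaustiveness of the $\nu$-orderings of $\lm_i$ against $\zt\mu_i, \eta\mu_i$ is complete. I expect the main obstacle to be bookkeeping in the discrete case: verifying that each borderline $\nu$-equality (e.g. $\lm_2 \nucong \zt\mu_2$ in region (a), or the boundary between (d) and (c)) is correctly assigned, i.e. that the fine distinction in Theorem \ref{thm:II.5.5}.b -- excessive when a $q$-value is tangible, quasilinear when both are ghost -- is respected at every boundary, and in particular that condition 3) captures \emph{precisely} the unique quasilinear borderline in Case IIIB and nothing else. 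One also has to confirm that the anisotropy hypothesis needed to speak of $\CS(x',y')$ is automatic here: since $(x,y)$ is excessive, $q$ is not quasilinear on $Rx + Ry$, and the tables show $q(\lm x + \mu y) = 0$ only when $(\lm,\mu) = (0,0)$ in the relevant cases, so $x', y' \neq 0$ forces them anisotropic; I would spell this out to close the argument.
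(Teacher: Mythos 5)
Your proposal follows essentially the same route as the paper: the paper's proof is precisely the reduction to the free binary case via the map $\chi$ of \eqref{eq:II.5.2} and Propositions \ref{prop:II.5.3}--\ref{prop:II.5.4} (with $\al=b(x,y)$ well defined because $(x,y)$ is excessive), followed by reading off the region-by-region CS-ratio computations (a)--(g) against Theorem \ref{thm:II.5.5} and Corollary \ref{cor:II.5.7}, including the Case IIIB corner that yields condition 3) and the degenerate Cases IV, V. The points you flag for careful checking (the discrete borderlines in regions (d), (e) and the anisotropy of $x',y'$) are exactly the ones the paper treats implicitly, so your plan matches its argument in both structure and substance.
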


\section{Supertropicalization: Two examples}\label{sec:II.8}

We illustrate the dependence of the stropicalization $q^\vrp$ of a quadratic form $q: V\to R$ on the choice of a base of the free module $V$ by two examples, which may be regarded as the simplest cases of interest.

We assume that $R$ is a field and $\vrp: R\to U$ is a supervaluation \cite[\S4]{IKR1}. Let $v: R\to M:=eU$ denote the valuation covered by $\vrp.$ Leaving aside a less interesting case, we assume that $v\ne e\vrp,$ i.e., $e\ne 1_U.$ Then $\vrp$ is ``tangible'', i.e., all values $\vrp(a),$ $a\in R,$ are tangible \cite[Proposition~8.13]{IKR1}. Making $U$ smaller we may assume, without loss of generality, that $\vrp(R^*)=\tT,$ $v(R^*)=\tG,$ with $\tT:=\tT(U)$, $\tG=\tG(U).$ Now $U$ is a tangible supersemifield.

We further assume that the supervaluation $\vrp$ is ``tangibly additive'' \cite[Definition~9.6]{IKR1}.\footnote{In general, the tangibly additive supervaluations seem to be the most suitable ones for applications, cf. \cite[\S9-\S11]{IKR1}.} Since $R$ is a ring, even a field, this implies that $\vrp$ is ``very strong'' \cite[\S10]{IKR1}, i.e., for all $a,b\in R,$
\begin{equation}\label{eq:II.8.1}
v(a)< v(b)\ds \Rightarrow\vrp(a+b)=\vrp(b).\end{equation}

We briefly recall the process of stropicalization when  $\dim V=2. $ Let
$$q=(q,v_1,v_2)=\begin{bmatrix} \al_1 & \al\\  &\al_2\end{bmatrix}$$
denote the presentation of the given (functional) quadratic form $q: V\to R$ after choice of a base $v_1,v_2$ of the vector space $V.$ Then
\begin{equation}\label{eq:II.8.2}
q^\vrp:=(q,v_1,v_2)^\vrp=\begin{bmatrix}\vrp(\al_1) & \vrp(\al)\\  & \vrp(\al_2)\end{bmatrix} \end{equation}
is the stropicalization of $q$ with respect to $(v_1,v_2),$ and
\begin{equation}\label{eq:II.8.3}
 b^\vrp=(b,v_1,v_2)^\vrp=\begin{pmatrix} \vrp(2)\vrp(\al_1) & \vrp(\al)\\ \vrp(\al) &\vrp(2)\vrp(\al_2)\end{pmatrix}\end{equation}
is the stropicalization of the unique companion $b: V\times V\to R$ of $q$ with respect to $(v_1,v_2)$, cf. \cite[\eq{9.14} and \eq{9.15}]{QF1}.
Here the presentations
\eqref{eq:II.8.2}, \eqref{eq:II.8.3}
refer to the standard base $\veps_1,\veps_2$ of~$U^2.$

\begin{exampleA*} $ $

$(q,v_1,v_2)=\begin{bmatrix} 0 & 1\\  & 0\end{bmatrix}.$
We take a new base of $V$
$$v_1'=a_{11}v_1+a_{12}v_2,\qquad v_2'=a_{21}v_1+a_{22}v_2,$$
and then have
$$(q,v_1',v_2')=\begin{bmatrix} a_{11}a_{12} & a_{11}a_{22}+a_{12}a_{21}\\
 & a_{21}a_{22}\end{bmatrix}.$$
We abbreviate
$$\tlq:=(q,v_1',v_2')^\vrp,\qquad \tlb:=(b,v_1',v_2')^\vrp.$$
Thus
\begin{equation}\label{eq:II.8.4}
\tlq=\begin{bmatrix} \vrp(a_{11}a_{12})& \vrp(a_{11}a_{22}+a_{12}a_{21})\\  & \vrp(a_{21}a_{22})\end{bmatrix}.
\end{equation}

\begin{description}
  \item[\textbf{Case I}]

 $\vrp(a_{11}a_{22})>_\nu \vrp(a_{12}a_{21}).$

Using \eqref{eq:II.8.1} we obtain from \eqref{eq:II.8.4}
$$\tlq=\begin{bmatrix} \vrp(a_{11}a_{12}) & \vrp(a_{11}a_{22})\\  & \vrp(a_{21}a_{22})\end{bmatrix}.$$
This implies (cf. \eqref{eq:II.8.2}, \eqref{eq:II.8.3})
$$\tlb=\begin{pmatrix} \vrp(2a_{11}a_{12}) & \vrp(a_{11}a_{22})\\
\vrp(a_{11}a_{22})& \vrp(2a_{21}a_{22})\end{pmatrix}.$$
Thus we have
$$\tlq=c\begin{bmatrix} b_1 &1\\  & b_2\end{bmatrix},\qquad \tlb=c\begin{pmatrix}\vrp(2)b_1 & 1\\ 1 & \vrp(2)b_2\end{pmatrix},$$
with
\begin{align*}c&: = \vrp(a_{11}a_{22})\in\tT,\\
 b_1&:=\frac{\vrp(a_{12})}{\vrp(a_{22})}\in\tT\cup\{0\},\\
 b_2&:=\frac{\vrp(a_{21})}{\vrp(a_{11})}\in\tT\cup\{0\},\end{align*}
 and $0<\vrp(2)\le e$ if $\Char R\ne 2,$ while $\vrp(2)=0$ if $\Char R=2.$ The value $\vrp(2)$ will not matter in what follows.

 Notice that $b_1b_2<_\nu 1.$ Thus the pair $(\veps_1,\veps_2)$ is excessive (cf. Definition \ref{defn:II.5.6}). If $b_1b_2\ne0$, we have the CS-ratio
 $$\CS(\veps_1,\veps_2)\cong_\nu\frac{1}{b_1b_2}$$
 with respect to $(\tlq,\tlb).$

\pSkip
 \item[\textbf{Case II}]$\vrp(a_{11}a_{22})<_\nu\vrp(a_{12}a_{21}).$

 Now
 $$\tlq=\begin{bmatrix} \vrp(a_{11}a_{12}) & \vrp(a_{12}a_{21})\\
  & \vrp(a_{21}a_{22})\end{bmatrix},$$
 and we obtain
 $$\tlq=c\begin{bmatrix} b_1 & 1\\  & b_2\end{bmatrix},\qquad\tlb=c\begin{pmatrix} \vrp(2)b_1 & 1\\ 1 & \vrp(2)b_2,\end{pmatrix}$$
 with $c, b_1, b_2$ as above (Case I).
Again $b_1b_2<_\nu 1,$ whence $(\veps_1,\veps_2)$ is excessive and, if $b_1b_2\ne0,$
$$\CS(\veps_1,\veps_2)\cong_\nu \frac{1}{b_1b_2}.$$
\pSkip
\item[\textbf{Case III}] $\vrp(a_{11}a_{22})\cong_\nu \vrp(a_{12}a_{21})\ne0.$

Using the general rule
$$ v(x+y) \ds\leq v(x) + v(y) \quad [= \max\{ v(x), v(y)\}]$$
(cf. \cite[Definition 2.1]{IKR1}) for $m$-valuations, we obtain from \eqref{eq:II.8.4}
$$\tlq=\begin{bmatrix} \vrp(a_{11}a_{12}) & \gamma\\  & \vrp(a_{21}a_{22})\end{bmatrix}$$
with $\gamma\le_\nu\vrp(a_{11}a_{12}),$ and then
$$\tlq=c\begin{bmatrix} b_1 &\delta\\  & b_2\end{bmatrix},\qquad \tlb=c\begin{pmatrix} \vrp(2)b_1 & \delta\\
\delta & \vrp(2)b_2\end{pmatrix}$$
with
$$c=\vrp(a_{11}a_{22})\in\tT,\qquad \delta\le_\nu1,$$
$$b_1=\frac{\vrp(a_{12})}{\vrp(a_{22})}\in\tT,\qquad b_2=\frac{\vrp(a_{21})}{\vrp(a_{11})}\in\tT.$$

Now $\delta^2\le_\nu1=b_1b_2.$ Thus the pair $(\veps_1,\veps_2)$ is quasilinear,\footnote{By this we mean that $\tlq$ is quasilinear on $U\veps_1\times U\veps_2$, hence on  $U\veps_1 +  U\veps_2$, cf. \S\ref{sec:II.5}.}
whence
$$\tlq=[b_1,b_2]:=\begin{bmatrix} b_1 &0\\  & b_2\end{bmatrix}.$$
More precisely, $(\veps_1,\veps_2)$ is weakly CS with respect to $(\tlq,\tlb)$ (cf. Definition~ \ref{def:II.5.10}).

\end{description}
These three cases exhaust all possibilities, since we cannot have $a_{11}a_{22}=a_{12}a_{21}=0,$ because $a_{11}a_{22}-a_{12}a_{21}\ne0.$
\end{exampleA*}

\begin{exampleB*} $ $

$(q,v_1,v_2)=[\al,\bt]:=\begin{bmatrix} \al & 0\\  &\bt\end{bmatrix}$ with $\al\ne0,$ $\bt\ne0.$

We choose a new base
$$v_1'=a_{11}v_1+a_{12} v_2,\qquad v_2'=a_{21}v_1+a_{22}v_2$$
of $V.$ Then
\begin{equation}\label{eq:II.8.5}
(q,v_1',v_2')=\begin{bmatrix} a_{11}^2 \al +a_{12}^2\bt& a_{11}a_{21}\al+a_{12}a_{22}\bt\\
 & a_{21}^2\al+a_{22}^2\bt\end{bmatrix}.
\end{equation}

We use again the abbreviations
$$\tlq:=(q,v_1',v_2')^\vrp,\qquad\tlb:=(b,v_1',v_2')^\vrp.$$

\begin{description}

\item[\textbf{Case I}] $v(a_{11}^2\al)>v(a_{12}^2\bt),\qquad v(a_{21}^2\al)>v(a_{22}^2\bt).$

It follows that $a_{11}\ne0,$ $a_{21}\ne0$, and
$$v(a_{11}a_{21}\al)>v(a_{12}a_{22}\bt).$$
Thus
$$\tlq=\begin{bmatrix}\vrp(a_{11}^2\al)& \vrp(a_{11}a_{21}\al)\\  & \vrp(a_{21}^2\al)
\end{bmatrix}.$$

We conclude that $\CS(\veps_1,\veps_2)=e,$ and hence the pair $(\veps_1,\veps_2)$ is quasilinear (more precisely, weakly CS), whence
\begin{equation}\label{eq:II.8.6}
\tlq=[\vrp(a_{11}^2\al),\vrp(a_{21}^2\al)]=[\vrp(\al),\vrp(\al)].
\end{equation}

\pSkip

\item[\textbf{Case II}] $v(a_{11}^2\al)=v(a_{12}^2\bt),$\qquad $v(a_{21}^2\al)>v(a_{22}^2\bt).$

We have $a_{11}\ne0,$ $a_{12}\ne0,$ since otherwise $a_{11}=a_{12}=0,$ which contradicts $a_{11}a_{22}-a_{12}a_{21}\ne0.$
It follows that
$$v(a_{11}a_{21}\al)>v(a_{12}a_{22}\bt),$$
and we obtain from \eqref{eq:II.8.5}
$$\tlq=\begin{bmatrix}\vrp(a_{11}^2\al + a_{12}^2\bt) & \vrp(a_{11}a_{12}\al)\\  & \vrp(a_{21}^2\al)\end{bmatrix}.$$

\pSkip

\item[\textbf{Case III}] $v(a_{11}^2\al)=v(a_{12}^2\bt),$\qquad $v(a_{21}^2\al) = v(a_{22}^2\bt).$

From $a_{11}a_{22}-a_{21}a_{21}\ne0$ we conclude that all entries $a_{ij}\ne 0.$
We cannot say more in this generality.
\pSkip

\item[Note]

In Cases II and III the nature of the pair $(\veps_1,\veps_2)$ (excessive, quasilinear, weakly CS, \dots) remains undetermined by the values $v(a_{ij}).$ This may change  if we have specified information about the value $v$ and $\al$ and $\bt.$ For example, if $v$ is compatible with a total ordering $\leq$ of the field $R$, cf.  \cite{lam}, and $\al > 0 $, $\bt > 0$, then
$$ v(a_{11}^2\al + a_{12}^2 \bt) = \max \{   v(a_{11}^2\al) , v( a_{12}^2 \bt)\} $$
 and we can say more. It may well happen that $\tlq$ is not quasilinear. (We do not go into details.)

Notice that Cases II and III can only occur if $v(\al),$ $v(\bt)$ are square equivalent (cf. \Qdefref{7.1}).

\pSkip

\item[\textbf{Case IV}] $v(a_{11}^2\al) > v(a_{12}^2\bt),$\qquad $v(a_{21}^2\al) < v(a_{22}^2\bt).$

Now we read off from \eqref{eq:II.8.5} that
$$\tlq=\begin{bmatrix} \vrp(a_{11}^2\al) & \vrp(a_{11}a_{21}\al+a_{12}a_{22}\bt)\\
 & \vrp(a_{22}^2\bt)\end{bmatrix}.$$

We have $a_{11}\ne0,$ $a_{22}\ne0.$ Thus the CS-ratio $\CS(\veps_1,\veps_2)$ exists and
\begin{align*}
\CS(\veps_1,\veps_2)&=\frac{v(a_{11}a_{21}\al+a_{12}a_{22}\bt)^2}{v(a_{11}^2a_{22}^2\al\bt)}\\[1mm]
& \leq \frac{v(a_{11}^2a_{21}^2\al^2)}{v(a_{11}^2a_{22}^2\al\bt)}+\frac{v(a_{12}^2a_{22}^2\bt^2)}{v(a_{11}^2a_{22}^2\al\bt)}\\[1mm]
&=\frac{v(a_{21}^2\al)}{v(a_{22}^2\bt)}+\frac{v(a_{12}^2\bt)}{v(a_{11}^2\al)} \ds <e.\end{align*}
Thus the pair $(\veps_1,\veps_2)$ is CS (cf. Definition  \ref{def:II.5.10}) and
$$\tlq=[\vrp(a_{11}^2\al),\vrp(a_{22}^2\bt)]=[\vrp(\al),\vrp(\bt)].$$

\item[\textbf{More Cases}]

If $$v(a_{11}^2\al)\le v(a_{12}^2\bt),\qquad v(a_{21}^2\al)>v(a_{22}^2\bt),$$
then interchanging $v_1,v_2$ we come back to Cases I, II.
\pSkip

If $$v(a_{11}^2\al)<v(a_{12}^2\bt),\qquad v(\al_{21}^2\al)\le v(a_{22}^2\bt),$$
then interchanging also $v_1',v_2'$ we come again  back to Cases I,II.

\pSkip

Finally, if
$$v(a_{11}^2\al)<v(a_{12}^2\bt),\qquad v(a_{21}^2\al)>v(a_{22}^2\bt),$$
then interchanging $v_1,v_2$ we come  back to Case IV.

\end{description}
Thus Cases I--IV exhaust all possibilities up to interchanging $v_1,v_2$ and/or $v_1',v_2'.$

This completes Example B.
\end{exampleB*}

If the values $v(\al),v(\bt)$ are \textit{not} square equivalent, then Case IV in Example B does not occur, as observed above. Thus we may state

\begin{prop}\label{prop:II.8.1}
Assume that $R$ is a field, and that $\vrp:R\to U$ is a tangibly additive supervaluation which is not ghost, and hence is very strong. Let $q=[\al,\bt]$ be a binary diagonal form over $R$ with $v(\al),v(\bt)$ not square equivalent $(v:=e\vrp).$ Then all stropicalizations of $q$ by $\vrp$ are quasilinear.
\end{prop}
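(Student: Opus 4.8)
The plan is to reduce Proposition \ref{prop:II.8.1} to the case analysis already carried out in Example B. Fix a base $v_1', v_2'$ of $V$ and write $v_i' = a_{i1}v_1 + a_{i2}v_2$ as in Example B, so that the stropicalization $\tlq = (q,v_1',v_2')^\vrp$ is given by the matrix \eqref{eq:II.8.5}. The task is to show that for every such choice of base, $\tlq$ is quasilinear on $U\veps_1 + U\veps_2$. By the exhaustive discussion at the end of Example B, after possibly interchanging $v_1, v_2$ and/or $v_1', v_2'$, we may assume we are in one of the Cases I--IV there. In Cases I and IV it was shown directly that $(\veps_1,\veps_2)$ is weakly CS, hence quasilinear by Theorem \ref{thm:II.5.12}, so nothing more is needed. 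The content of the proposition is therefore concentrated in Cases II and III.

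First I would recall the observation made in the ``Note'' after Case III: Cases II and III can occur only if $v(\al)$ and $v(\bt)$ are square equivalent, i.e. $v(\al)v(\bt)^{-1}$ is a square in $\tG$ — equivalently (pulling back through $v$) $v(\al\bt)$ lies in $2\cdot v(R^*)$ in additive notation. Indeed, in Case II one has $v(a_{11}^2\al) = v(a_{12}^2\bt)$, i.e. $v(\al)v(\bt)^{-1} = v(a_{12}a_{11}^{-1})^2$, which exhibits $v(\al)v(\bt)^{-1}$ as a square in $\tG$; the same computation works in Case III. So under the hypothesis that $v(\al), v(\bt)$ are \emph{not} square equivalent, Cases II and III are vacuous, and only Cases I and IV remain — in both of which $(\veps_1,\veps_2)$ was already shown to be CS or weakly CS, hence $\tlq$ is quasilinear. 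Since the base $v_1', v_2'$ was arbitrary, this proves that every stropicalization of $q$ by $\vrp$ is quasilinear.

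The one point requiring a little care is the bookkeeping of the ``up to interchanging'' reductions: I would spell out that interchanging $v_1$ and $v_2$ replaces $q = [\al,\bt]$ by $[\bt,\al]$ and swaps the roles of the two diagonal inequalities in \eqref{eq:II.8.5}, while interchanging $v_1'$ and $v_2'$ just relabels $\veps_1 \leftrightarrow \veps_2$, which affects neither quasilinearity nor the square-equivalence of $v(\al), v(\bt)$. Hence the hypothesis of the proposition is preserved under all these operations, and it is legitimate to invoke the Case I--IV classification. I expect the main (and essentially only) obstacle to be purely expository: making transparent why the four cases of Example B, together with the non-square-equivalence hypothesis, genuinely cover every base change and force quasilinearity; the algebra itself is already done in Example B and in Theorem \ref{thm:II.5.12}.
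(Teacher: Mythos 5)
Your proposal is correct and follows essentially the paper's own route: the proposition is deduced directly from Example B by observing (as in the ``Note'' there) that Cases II and III can only occur when $v(\al)$ and $v(\bt)$ are square equivalent, so under the hypothesis only Cases I and IV survive up to the interchanges of $v_1,v_2$ and $v_1',v_2'$, and in those cases the pair $(\veps_1,\veps_2)$ was already shown to be weakly CS resp.\ CS, hence $\tlq$ quasilinear. Your identification of Cases II and III as the excluded ones is in fact the correct reading: the sentence in the paper immediately preceding the proposition says that ``Case IV does not occur,'' which is a slip (Case IV occurs already for the identity base change, independently of square equivalence, and is harmless since it yields a quasilinear stropicalization anyway).
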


\begin{remark}\label{rem:II.8.2}
This proposition does not contradict Example A. Assume that $\Char R\ne2.$ If $q=\left[\begin{smallmatrix}0 & 1\\  & 0\end{smallmatrix}\right]$ and $q'=[\al,\bt]$ are forms over $R$ with $\al\ne0,$ $\bt\ne0,$ and $q\cong q',$ then $\bt=-\lm^2\al$ for some $\lm\in R^*,$ and hence $v(\al)$ and $v(\bt)$ are square equivalent.
\end{remark}

\section{The minimal ordering on a free $R$-module}\label{sec:I.6}

In this section $R$ is a supertropical semiring.  If $V$ is any module over $R$, we define on $V$ a binary relation~$\le_V$ as follows: \\ For any $x,y\in U$,
 \begin{equation}\label{eq:I.6.1}
x\le_Vy \dss \rightleftharpoons\exists z\in V: x+z=y.
\end{equation}

This relation is clearly reflexive $(x\le x)$ and transitive $(x\le y, y\le z\Rightarrow x\le z).$ It is also antisymmetric, \textit{hence is a partial ordering on the set} $V.$ Indeed, assume that $x + z=y$ and $y+w=x.$ This implies $x+z+w=x,$ $y+z+w=y,$ and then
$$x+e(z+w)=x,\quad y+e(z+w)=y.$$
Adding $z$ at both sides of the first equation, and using that $z+ez=ez$, we obtain
$$y=x+e(z+w)=x,$$
as desired.

Clearly, our partial ordering $\le_V$ satisfies  the rules $(x,y,z\in V)$
 \begin{gather}
 0\le z,\label{eq:I.6.2}\\
 x\le y \dss \Rightarrow x+z\le y+z.\label{eq:I.6.3}
 \end{gather}
 (Thus, $x\le y,$ $x'\le y'\Rightarrow x+x'\le y+y'.)$
 It is now obvious that any partial ordering $\le'$ on~ $V$ with the properties \eqref{eq:I.6.2}, \eqref{eq:I.6.3}, is a refinement of $\le_V:$ If $x\le_Vy,$ then $x\le' y.$

\begin{defn}\label{defn:I.6.1}
We call $\le_V$ the \bfem{minimal ordering} on the $R$-module $V.$ \footnote{In the special case $V= R$ the minimal ordering has been discussed already in \Qsecref{5}, including an explanation of the term ``minimal''.}
\end{defn}
\begin{notation} \label{notat:I.6.2}
As long as no other orderings of $V$ come into play, we usually write $x\le y$ instead of $x\le_V y.$ But notice that  if $W$ is a submodule of $V,$ it may happen for $x,y\in W$ that $x\le_V y$ but not $x\le_Wy.$

As usual, $x<y$ means that $x\le y$ and $x\ne y.$\end{notation}

In particular, $R$ itself carries the minimal ordering $\le_R.$ It already showed up in \cite[Proposition 11.8]{IKR1} and \Qsecref{5}. Again, we usually write $\lm \le \mu$ instead of $\lm \le_R\mu.$

Scalar multiplication is compatible with these orderings on $R$ and $V:$
\begin{equation}\label{eq:I.6.4}
\lm \le\mu,\ x\le y\dss \Rightarrow \lm  x\le \mu y
\end{equation}
for all $\lm ,\mu\in \R,\ x,y\in V.$

Before moving on to details about minimal orderings, we hasten to point out that these orderings are relevant for the geometry in a supertropical quadratic space. This is apparent already from the definition of quadratic forms \Qdefref{0.1}.

\begin{remark}\label{rem:I.6.3} As before, let $V$ be a module over a supertropical semiring $R.$ If $(q,b)$ is a quadratic pair on $V,$ then for all $x,y,z,w\in V$ the following hold:
\begin{equation}\label{eq:I.6.5}
\qquad x\le _V z \dss \Rightarrow q(x)\le_R q(z),\end{equation}
\begin{equation}\label{eq:I.6.6}
x\le _V z,\ y\le_V w\dss \Rightarrow b(x,y)\le_R b(z,w),\end{equation}
\begin{equation}\label{eq:I.6.7}
b(x,y) \ds{\le_R}  q(x+y).\end{equation}
\end{remark}

The minimal ordering of $R$ has the following detailed description in terms of the $\nu$-dominance relation and the sets $eR$ and $\tT=R\sm (eR).$

\begin{prop}\label{prop:I.6.3}
\quad

\begin{enumerate}
\item[a)] Assume that $x\in eR.$ Then $x$ is comparable (in the minimal ordering) to every $y\in R.$ More
precisely, using the $\nu$-notation,
\begin{equation}\label{eq:I.6.8}
x<y \dss \Leftrightarrow x<_\nu y,\end{equation}
\begin{equation}\label{eq:I.6.9}
y<x \dss \Leftrightarrow \text{either}\ y<_\nu x\ ,  \text{or}\ y\in\tT\ \text{and}\ y\cong_\nu x.\end{equation} \pSkip
\item[b)] Assume that $x\in\tT,$ $y\in R.$ Then
\begin{equation}\label{eq:I.6.10}
x<y\dss \Leftrightarrow \text{either}\ x <_\nu y\ , \text{or}\ x\cong_\nu y\ \text{and}\ y\in eR,\end{equation}
\begin{equation}\label{eq:I.6.11}
y<x \dss \Leftrightarrow   y<_\nu x.\end{equation}
Thus $x$ and $y$ are incomparable iff $y\in \tT$ and $x\ne y,$ but $x\cong_\nu y.$
\end{enumerate}

\end{prop}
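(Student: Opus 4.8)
The plan is to reduce all four displayed equivalences to a single explicit description of the minimal ordering $\le_R$ on $R$, and then to split into cases according to whether each of $x,y$ lies in $eR$ or in $\tT$. The description I would establish first is: for $x,y\in R$,
\[
x\le_R y \iff \big(y=x\big)\ \text{or}\ \big(ex<ey\big)\ \text{or}\ \big(y=ex\big).
\]
This falls straight out of the addition formula \eqref{eq:0.6}: if $x+z=y$, then by the three clauses of \eqref{eq:0.6} either $ex>ez$ and $y=x$, or $ex<ez$ and $y=z$ (hence $ex<ey$), or $ex=ez$ and $y=ez=ex$; conversely the choices $z=0$, $z=y$, $z=x$ realize respectively $y=x$, $x\le y$ when $ex<ey$, and $y=ex$ (using $x+x=ex$). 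One could instead quote \cite[Proposition~11.8]{IKR1} or \Qsecref{5}. The one thing to bear in mind afterwards is that the three clauses are not disjoint, and collapse precisely when the element concerned is a ghost (then $x=ex$); this is the source of the asymmetry between parts a) and b).

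For part a), assume $x=ex\in eR$. Comparability of $x$ with a given $y\in R$ splits into three subcases: if $ex<ey$ the description gives $x\le y$; if $ex>ey$ it gives $y\le x$ (the middle clause with the roles reversed); and if $ex=ey$ then $x=ex=ey$, so $y\le x$ by the third clause. For \eqref{eq:I.6.8}: if $x<y$ then $x\le y$ with $x\ne y$, so by the description either $ex<ey$ or $y=ex$, and the latter would give $y=ex=x$; conversely $ex<ey$ yields $x\le y$ and $x\ne y$. For \eqref{eq:I.6.9}: from $y<x$ and the description applied to $y\le x$ (with $y\ne x$) we get $ey<ex$ --- the first alternative --- or $x=ey$; in the second case applying the ghost map gives $ex=ey$, i.e.\ $y\cong_\nu x$, and $y\ne x$ rules out $y\in eR$, so $y\in\tT$, the second alternative. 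The converse implications are read off the description the same way.

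For part b), assume $x\in\tT$, so $x\notin eR$; in particular $x\ne ex$ and $x\ne ey$ for every $y$ (as $ey\in eR$). For \eqref{eq:I.6.10}: from $x<y$ the description gives $ex<ey$ (the first alternative) or $y=ex$, and in the latter case $y\in eR$ and, applying the ghost map, $x\cong_\nu y$ (the second alternative); the converse is immediate from the description. For \eqref{eq:I.6.11}: in $y\le x$ the clause ``$x=ey$'' of the description is impossible since $x\notin eR$, so $y\le x$ means $y=x$ or $ey<ex$, whence $y<x\iff ey<ex$. Finally, $x$ and $y$ are incomparable iff none of $x<y$, $y<x$, $x=y$ holds; negating \eqref{eq:I.6.11} forces $ey\ge ex$, negating the ``$ex<ey$'' part of \eqref{eq:I.6.10} then forces $ex=ey$, i.e.\ $x\cong_\nu y$, negating the remaining part of \eqref{eq:I.6.10} forces $y\notin eR$, i.e.\ $y\in\tT$, and $x\ne y$ remains --- which is exactly the asserted description of incomparability.

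I do not anticipate a genuine obstacle: once the description of $\le_R$ is in place, everything is bookkeeping. The two points needing real care are the overlaps among the three clauses of that description when an element is a ghost (which is what makes parts a) and b) look different), and the fact that $x\in\tT$ entails $x\notin eR$ and hence $x\ne ex$ and $x\ne ey$ --- this is precisely what lets the clause ``$x=ey$'' be discarded throughout part b).
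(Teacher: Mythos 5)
Your proposal is correct and follows essentially the same route as the paper, whose proof simply states that everything can be read off from the addition formula \eqref{eq:0.6}; you have merely spelled out the intermediate description of $\le_R$ (namely $x\le_R y$ iff $y=x$, or $ex<ey$, or $y=ex$) and the resulting case bookkeeping explicitly.
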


\begin{proof}
All this can be read off from the description \eqref{eq:0.6} of the sum $x+y$ of $x,y\in R$ in terms of the $\nu$-dominance relation, recalled from \cite[\S2]{IzhakianRowen2007SuperTropical}.\footnote{The general assumption in \cite{IzhakianRowen2007SuperTropical}, that the monoid $(eR,\cdot \; )$ is cancellative, is not needed here. It is only relevant if products $xy$ are involved.}
\end{proof}

From Proposition  \ref{prop:I.6.3} we read of that for any two elements $x,y$ of $R$ the maximum $x \vee y:= \max_R\{ x,y\} $ exists, namely
\begin{equation}\label{eq:I.6.13}
x \vee y\dss  = \left\{
                  \begin{array}{ll}
                    x & \hbox{if \ } ex < ey; \\
                    y & \hbox{if \ } ex > ey;\\
                    ex & \hbox{if \ } ex =  ey.
                  \end{array}
                \right.
\end{equation}
Note that
\begin{equation}\label{eq:I.6.13}
e(x \vee y) = (ex)\vee (ey) = ex + ey,
\end{equation}
while  for arbitrary $\lm \in R$ in general only
$\lm(x \vee y) \leq (\lm x)\vee (\lm y)$, but here we have equality if $R$
 is a supersemifield.

Assume now that $V$ is a free $R$-module with base $(\veps_i \ds |i\in I).$ If $x,y$ are vectors in $V$ with coordinates $(x_i\ds |i\in I)$, $(y_i \ds |i\in I),$ i.e.,
$$x=\sum_{i\in I}x_i\veps_i\quad y=\sum_{i\in I}y_i\veps_i,$$
where $x_i\ne0$ or $y_i\ne0$ only for finitely many $i\in I, $ then clearly
\begin{equation}\label{eq:I.6.14}
x\le_Vy \dss \Leftrightarrow \forall i\in I\quad x_i\le_Ry_i.\end{equation}

%
%
%
%
%

Moreover, the maximum $x \vee y = \max_V\{x,y\}$, exists, and
\begin{equation}\label{eq:I.6.15}
x\vee y \dss =  \sum _{i \in I} (x_i \vee y_i)\veps_i.\end{equation}
It will be helpful below to argue by use of the \textbf{support} of an element
$x = \sum_{i\in I} x_i\veps_i$ of the free module $V$ defined as follows
\begin{equation}\label{eq:I.6.16}
\supp(x) := \{  i \in I \ds | x_i \neq 0\}.    \end{equation}
As consequence of \eqref{eq:I.6.15} we have
\begin{equation}\label{eq:I.6.17}
\supp(x \vee y) =\supp(x) \cup \supp(y).    \end{equation}

Notice that   $ \supp(x)$ is essentially independent of the choice of the base
$(\veps_i  \|i\in I)$, since up to permutation every other base of $V$ arises by multiplying the $\veps_i$ by units of $R$ \Qthmref{0.9}. Notice also that $\supp(x)$ is empty iff $x= 0$, and that $y \leq x$ implies $\supp(y) \subseteq \supp(x)$.

\section{$q$-minimal vectors with small support}\label{sec:II.9}

In this section $R$ is again a  supertropical semiring. In all $R$-modules we work with their minimal orderings.

\begin{defn}\label{defn:II.9.1} $ $
\begin{enumerate}\item[a)] We call a map $\map  :V\to W$ between $R$-modules $V,W $ \bfem{monotonic}
if for any $x,y\in V$
$$y\le x \dss \Rightarrow \map  (y)\le\map  (x).$$ \pSkip
\item[b)] Given a monotonic map $\map  : V\to W$, we call a vector $x\in V$ $\map  $-\bfem{minimal},
 if there \bfem{does
not} exist a vector $x'<x$ in $V$ with $\map  (x')=\map  (x).$
\end{enumerate}\end{defn}

\begin{examples}\label{examps:II.9.2} $ $ \begin{enumerate}
\item[i)] For any $n\in\N$ and $c\in R$, the map $R\to R,$ $x\mapsto cx^n,$ is additive, and hence monotonic. More generally,
every monomial map $R^n\to R,$
$$(x_1,\dots, x_n) \ds \mapsto cx_1^{\al _1}\cdots x_n^{\al _n}, \qquad (\al _i\in\N_0),$$
is monotonic, and hence every polynomial map $f: R^n\to R$ is monotonic.
\pSkip

\item[ii)] Every quadratic form $q: V\to R$ on an $R$-module $V$ is monotonic, cf.
Remark \ref{rem:I.6.3}. We note the trivial fact
that an isotropic vector $x\in V\sm\{0\}$ is never $q$-minimal, since $0<x,$ but $q(x)=q(0)=0.$

\end{enumerate}
\end{examples}

Given a quadratic form $q:V\to R$, we turn to the problem of determining the $q$-minimal vectors in $V$ in the case that the $R$-module
$V$ is free, and, if possible, at later stages also in more general situations. The following distinction of the
vectors in $V$ will be useful here and elsewhere.

\begin{defn}\label{defn:II.9.3}
We call a vector $x\in V\sm\{0\}$ $\bf g$-\bfem{isotropic}, if $q(x)\in eR,$ and we call $x$ $\bf g$-\bfem{anisotropic},
if $q(x)\in\tT.$\footnote{The letter ``g'' alludes to ``ghost''.} The zero vector is regarded as both $g$-isotropic
and $g$-anisotropic.\end{defn}

%

\begin{prop}\label{prop:II.9.4}
Assume that $V$ is free with base $(\veps_i  \|i\in I).$ Let $x\in V\sm\{0\}$ be $q$-minimal.
Then $|\supp(x)|\le 2$ if
$q(x)\in\tT$, and $|\supp(x)|\le 4$ if $q(x)\in\tG.$
\end{prop}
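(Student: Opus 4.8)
The plan is to analyze a $q$-minimal vector $x = \sum_{i \in \supp(x)} x_i \veps_i$ by studying how $q$ decomposes over the support. Write $J := \supp(x)$ and $V_J := \sum_{i \in J} R\veps_i$; by \eqref{eq:I.6.14} the minimal ordering on $V_J$ agrees with the restriction of $\le_V$, and a vector $x' < x$ in $V$ automatically lies in $V_J$, so we may work entirely inside the free module $V_J$ of rank $|J|$. Using the decomposition $q = q_{\QL} + \rho$ from \eqref{eq:0.9}, we have $q(x) = \sum_{i \in J} q(\veps_i) x_i^2 + \rho(x)$, and more usefully, expanding via the companion $b$,
\begin{equation*}
q(x) = \sum_{i \in J} q(\veps_i) x_i^2 + \sum_{\{i,j\} \subseteq J,\ i \neq j} b(\veps_i, \veps_j) x_i x_j.
\end{equation*}
The key observation is that if one could strictly decrease a coordinate $x_k$ (or drop it to $0$) without changing the value of $q(x)$, then $x$ would not be $q$-minimal; hence $q$-minimality forces every coordinate to be ``$\nu$-essential'' in a precise sense determined by \eqref{eq:0.6}: the $\nu$-dominant term(s) in the sum above must genuinely involve the index in question, and moreover lowering $x_k$ below a certain threshold must strictly drop the dominant term.

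The main step is a counting argument on which monomials can be $\nu$-dominant in $q(x)$. The value $q(x)$ equals, up to a possible ghost factor $e$, the $\nu$-largest among the monomials $q(\veps_i) x_i^2$ and $b(\veps_i,\veps_j) x_i x_j$ (cf. Remark \ref{rmk:II.7.8} and \eqref{eq:0.6}). For $x$ to be $q$-minimal, every index $i \in J$ must appear in \emph{some} $\nu$-maximal monomial that is forced to stay at its value — otherwise we lower $x_i$. Now distinguish the two cases. If $q(x) \in \tT$, then $q(x)$ cannot be the ghost sum of two $\nu$-equal monomials (that would be in $eR$), so there is a \emph{unique} $\nu$-dominant monomial; it involves at most $2$ indices, hence $|J| \le 2$. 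If $q(x) \in \tG$, then $q(x) = e \cdot (\text{common } \nu\text{-value})$ and there may be several $\nu$-maximal monomials; each involves at most $2$ indices, and I must show that $3$ suitably chosen monomials (covering all of $J$) is the worst that can occur, i.e., that a $q$-minimal vector cannot force $5$ or more indices. The cleanest way: if $|J| \ge 5$, then among the at-most-$|J|$ diagonal monomials and the pairwise $b$-monomials one shows, using a dimension/incidence count (each ``essential'' monomial is an edge or loop in a graph on $J$, and the union of these edges must cover all of $J$ while every vertex must be genuinely non-removable), that some coordinate can be strictly lowered while keeping the $\nu$-dominant value — giving the bound $|J| \le 4$. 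One must also check that when $|J|=3$ or $4$, the lowering argument is actually blocked, so the bound is not an artifact; but for this Proposition only the inequality is claimed.

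The hard part will be making the ``non-removability'' bookkeeping rigorous: I need to show precisely when decreasing a single coordinate $x_k$ fails to change $q(x)$, accounting for the subtle discrete/ghost phenomena of Theorem \ref{thm:II.5.5} and the three-term interactions in Proposition \ref{prop:II.7.4}. In particular, lowering $x_k$ scales every monomial through $k$ by the same factor, so if \emph{all} currently $\nu$-maximal monomials pass through $k$, one cannot conclude minimality fails — this is exactly the configuration that allows $|J|$ up to $4$ in the ghost case (three $\nu$-equal monomials whose edges form a path or triangle on $4$ vertices, or similar). I would organize the proof by first reducing to the ghost case $q(x) \in \tG$, setting up the graph $G$ on vertex set $J$ whose edges/loops are the $\nu$-maximal monomials, arguing that $G$ must have no isolated vertices and that removing any edge must expose an isolated vertex (else that coordinate lowers freely), and then checking that such a graph on the relevant monomial structure forces $|V(G)| \le 4$. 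The cancellativity hypothesis on $\tG$ (needed so that $\nu$-values of products behave well, as flagged in the footnote to Proposition \ref{prop:I.6.3}) enters precisely in comparing these monomials.
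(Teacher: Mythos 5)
Your reduction to $V_J$ and your treatment of the tangible case are fine and agree in substance with the paper: if $q(x)\in\tT$, the expansion $q(x)=\sum_{i}q(\veps_i)x_i^2+\sum_{i<j}b(\veps_i,\veps_j)x_ix_j$ has a unique $\nu$-dominant monomial, it involves at most two indices $k,\ell$, and since $x_k\veps_k$ (resp.\ $x_k\veps_k+x_\ell\veps_\ell$) is $\le x$ with the same $q$-value, $q$-minimality forces $|\supp(x)|\le 2$.

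The ghost case, however, contains a genuine gap. You propose to show that if $|J|\ge 5$ some single coordinate can be lowered, encoding minimality by the combinatorial conditions ``the $\nu$-maximal monomials cover all of $J$'' and ``removing any such edge exposes an isolated vertex,'' and then to deduce $|J|\le 4$ by an incidence count. Those conditions do not yield the bound: three pairwise disjoint dominant tangible edges on six vertices satisfy both of them, so the graph bookkeeping alone cannot force $|J|\le 4$; the real reason such a configuration is not $q$-minimal (lowering one coordinate leaves two other $\nu$-equivalent tangible dominant terms, whose sum still gives the ghost value) is invisible to the edge/isolated-vertex criterion. The missing idea — which is the entire content of the paper's proof — is much more direct: since $q(x)\in\tG$, the value $q(x)$ is already produced either by a single $\nu$-dominant monomial which is ghost, or by the sum of just \emph{two} $\nu$-dominant monomials which are tangible (and $\nu$-equivalent). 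Let $S\subseteq J$ be the set of indices occurring in that one monomial, resp.\ in those two monomials, and put $x(S):=\sum_{i\in S}x_i\veps_i$. Then $|S|\le 2$ resp.\ $|S|\le 4$, $x(S)\le x$, and $q(x(S))=q(x)$, because the selected dominant term(s) appear in the sub-sum for $x(S)$ and every other term is $\nu$-dominated. Minimality then gives $x=x(S)$, hence $|J|\le 4$. Note that $q$-minimality allows comparison of $x$ with \emph{any} $x'<x$, not only with vectors obtained by lowering one coordinate, so no counting argument, no appeal to Theorem \ref{thm:II.5.5}, and no cancellativity hypothesis on $\tG$ is needed here.
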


\begin{proof}
We have a finite non-empty subset $ J =\supp(x)$ of $I$, such that $x=\sum\limits_{i\in J }x _i
\veps_i,$ all $x _i\ne0.$ We choose a companion $b$ of $q.$ Then
\begin{equation}
q(x)=\sum_{i\in J }x _i^2q(\veps_i)+\sum_{\substack{i<j\\i,j\in J }}x _ix _j
b(\veps_i,\veps_j).\tag{$\ast$}
\end{equation}
and $q(x)\ne0.$

If $q(x)\in \tT ,$ the sum on the right of $(\ast$) contains  a unique $\nu$-dominant term. If this term
is $x ^2_kq(\eps_k),$ then $x _k\eps_k\le x$ and $q(x _k\eps_k)=q(x);$ hence $x=x _k\eps_k$ and $ J
=\{k\}.$ If the $\nu$-dominant term is $x _kx _\ell b(\veps_k,\veps_\ell),$ then $x _k
\eps_k+x _\ell \veps_\ell\le x$ and again both vectors have the same $q$-values, and hence $x=x _k\eps_k+x _\ell
\eps_k,$ and $ J =\{k,\ell\}.$ Indeed, then
$$ q(x) = x_k x_\ell b(\eps_k, \eps_\ell) \leq q(x_k \eps_k +  x_\ell \eps_\ell) \leq q(x).$$

If $q(x)\in\tG$, then on the right of $(\ast$) there exists either a $\nu$-dominant term, which is ghost,
or there exist two $\nu$-dominant terms which are tangible. In the first case, we see as above that $| J |
\le 2,$ and in the second that $| J |\le 4.$
\end{proof}


\begin{cor}\label{cor:II.9.5}
Assume in \propref{prop:II.9.4} also that $q$ is quasilinear. Then $|\supp (x)|=1$ if $q(x)\in \tT$, and $|\supp (x)|
\le 2$ if $q(x)\in\tG.$\end{cor}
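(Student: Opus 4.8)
The plan is to specialise the proof of Proposition \ref{prop:II.9.4} to the quasilinear case, in which all off-diagonal terms disappear. First I would record that, since $q$ is quasilinear, $b=0$ is a companion of $q$, so by \eqref{eq:0.8} the expansion $(\ast)$ used in the proof of Proposition \ref{prop:II.9.4} simplifies to
$$q(x)=\sum_{i\in J}x_i^2\,q(\veps_i),\qquad J:=\supp(x),$$
a sum over the diagonal terms alone, with all $x_i\neq 0$ and $q(x)\neq 0$ (a nonzero isotropic vector is never $q$-minimal, by Examples \ref{examps:II.9.2}(ii)).

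Next I would read off the structure of this sum from the description \eqref{eq:0.6} of addition in a supertropical semiring. If $q(x)\in\tT$, the sum cannot have two $\nu$-maximal summands, since a sum of two $\nu$-equal elements is a ghost by \eqref{eq:0.4}; hence there is a unique $\nu$-dominant term $x_k^2q(\veps_k)$, and iterating \eqref{eq:0.6} gives $q(x)=x_k^2q(\veps_k)=q(x_k\veps_k)$. As $x_k\veps_k\leq x$ and $q$ is monotonic, $q$-minimality of $x$ forces $x=x_k\veps_k$, i.e. $|\supp(x)|=1$.

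If $q(x)\in\tG$, I would distinguish whether the sum has one or several $\nu$-maximal terms. With a unique $\nu$-dominant term $x_k^2q(\veps_k)$ the previous argument applies verbatim and gives $|\supp(x)|=1$. Otherwise I pick two $\nu$-maximal terms $x_k^2q(\veps_k)$, $x_\ell^2q(\veps_\ell)$ with $k\neq\ell$ and set $y:=x_k\veps_k+x_\ell\veps_\ell$; then $y\leq x$ and $q(y)=x_k^2q(\veps_k)+x_\ell^2q(\veps_\ell)$ is a ghost which is $\nu$-equivalent to $q(x)$. Since the ghost map is the identity on $eR$ (as $e$ is idempotent), two $\nu$-equivalent ghosts coincide, so $q(y)=q(x)$, and $q$-minimality of $x$ yields $x=y$; thus $|\supp(x)|=2$, and a third $\nu$-maximal term is then excluded automatically since $\supp(x)=\{k,\ell\}$.

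I do not expect any genuine obstacle here: the whole point is that quasilinearity removes the cross terms, after which the statement is a direct restriction of Proposition \ref{prop:II.9.4}. The only item worth spelling out is the elementary remark that $\nu$-equivalent ghost elements are equal, which follows at once from $eu=u$ for $u\in eR$. Accordingly I would write this up as a short corollary-style argument rather than reproducing the earlier proof in full.
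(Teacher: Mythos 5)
Your proposal is correct and follows exactly the paper's route: choose the companion $b=0$ so that the expansion $(\ast)$ from Proposition \ref{prop:II.9.4} contains only the diagonal terms $x_i^2q(\veps_i)$, and then rerun the $\nu$-dominant-term analysis, which now yields $|\supp(x)|=1$ in the tangible case and $|\supp(x)|\le 2$ in the ghost case. You merely spell out in detail what the paper compresses into one sentence, including the harmless observation that $\nu$-equivalent ghost elements are equal.
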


\begin{proof}
We choose the companion $b=0.$ Now, in the above  arguments   no $\nu$-dominant terms $x _kx _\ell b(
\veps_k,\veps_\ell)$ show up.\end{proof}

Recall from the last lines of \S\ref{sec:I.6} that for  vectors $x',x$ in $V$ with $x' \leq x$ the support of $x'$ is contained in the support of $x$.
Thus in searching for $q$-minimal vectors in $V$ it is not loss of generality to assume that
$|I| \leq 4$. If $q$ is quasilinear  we may even assume that $|I| \leq 2$.

We now deal with the case that $|I| \leq 2$, postponing the cases  $|I| = 3$ and $|I| = 4$ to the next section.

\begin{prop}\label{prop:II.9.6}
{}\quad

\begin{enumerate}
\item[a)] Assume that $V$ is free with a single  base vector $\veps_1.$ When $q(\veps_1)\in\tT,$ all
vectors in $V$ are $q$-minimal. If $q(\veps_1)\in\tG,$ a vector $\lm \veps_1$ is $q$-minimal iff
$\lm \in\tT.$ \pSkip

\item[b)] Assume that $V$ is free with base $(\veps_1,\veps_2)$, and that $q$ is quasilinear. Let
$\al _1:=q(\veps_1),$ $\al _2:=q(\veps_2).$ A vector $x=\lm \veps_1+\mu\veps_2$ with
$\lm ,\mu\ne0$ is $q$-minimal iff $\lm ,\mu,\al _1,\al _2\in\tT$ and $\lm ^2\al _1\cong_\nu \mu^2
\al _2.$ (Thus every $q$-minimal vector with $|\supp (x)|=2$ is $g$-isotropic.)
\end{enumerate}
\end{prop}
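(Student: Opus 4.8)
The plan is to prove each part by translating the $q$-minimality condition into an explicit statement about $\nu$-dominance of the relevant monomials, using the description of the minimal ordering on $R$ and on a free module from \S\ref{sec:I.6}, together with the quasilinear expansion \eqref{eq:0.8}. The crucial tool is \propref{prop:I.6.3}: for $\lm\in R$ we have $\lm\veps_1<x$ (where $x=\lm'\veps_1+\mu'\veps_2$, say) controlled coordinatewise, and the value $q$ applied to a smaller vector is $\le_R$-smaller by \eqref{eq:I.6.5}. So a vector fails to be $q$-minimal precisely when some strictly smaller vector has the same $q$-value, and by \eqref{eq:0.6}--\eqref{eq:0.7} this can be decided by comparing $\nu$-values of coordinates and deciding tangibility.

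\textbf{Part a).} First I would treat the rank-one case. Write $x=\lm\veps_1$, so $q(x)=\lm^2\al_1$ with $\al_1=q(\veps_1)$. If $\al_1\in\tT$: the only vectors $x'<\lm\veps_1$ are of the form $\mu\veps_1$ with $\mu<_R\lm$; by \propref{prop:I.6.3}, $\mu<\lm$ forces either $\mu<_\nu\lm$ (then $\mu^2\al_1<_\nu\lm^2\al_1$, different $\nu$-value, so $q(x')\ne q(x)$) or $\mu\cong_\nu\lm$ with $\lm\in eR$ — impossible since $\lm\veps_1$ could be tangible or ghost, but in the sub-case $\mu\cong_\nu\lm$, $\mu\ne\lm$ we would need $\mu\in\tT$ incomparable, or $\lm\in eR$; I would check that in every comparable case $q(\mu\veps_1)\ne q(\lm\veps_1)$ because either the $\nu$-value drops or the tangible/ghost type of $\lm^2\al_1$ is not attained by $\mu^2\al_1$. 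Hence all vectors are $q$-minimal. If $\al_1\in\tG$: then $q(\lm\veps_1)=\lm^2\al_1\in eR$ always; if $\lm\in eR$, then $0<\lm\veps_1$ but $e(\lm\veps_1)$... more to the point, $\lm\veps_1$ with $\lm\in\tG$ has $(e\lm^{\text{tangible representative}})\veps_1<\lm\veps_1$ of the same $q$-value (since $(e\lm)=\lm$ and a tangible $\mu$ with $e\mu=\lm$ satisfies $\mu\veps_1<\lm\veps_1$, $\mu^2\al_1=\lm^2\al_1$ as $\al_1$ is ghost and absorbs), so $\lm\veps_1$ is not $q$-minimal; if $\lm\in\tT$, any $x'<\lm\veps_1$ has coordinate $\mu<_\nu\lm$ by \propref{prop:I.6.3}, so $q(x')<_\nu q(x)$, giving $q$-minimality.

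\textbf{Part b).} Now $q$ is quasilinear on the rank-two module, so $q(\lm\veps_1+\mu\veps_2)=\lm^2\al_1+\mu^2\al_2$ by \eqref{eq:0.8}. For $x=\lm\veps_1+\mu\veps_2$ with $\lm,\mu\ne0$ to be $q$-minimal: First, by part a)'s reasoning applied to the sub-vector $\lm\veps_1<x$, we need $q(\lm\veps_1)<q(x)$, i.e. $\lm^2\al_1<\lm^2\al_1+\mu^2\al_2$, which by \eqref{eq:0.6} forces $\lm^2\al_1\le_\nu\mu^2\al_2$ and, if equality of $\nu$-values, forces $\lm^2\al_1\in\tT$ so that $\lm^2\al_1<\lm^2\al_1+\mu^2\al_2=e\mu^2\al_2$ strictly; symmetrically with $\mu\veps_2<x$ we need $\mu^2\al_2\le_\nu\lm^2\al_1$ and analogously. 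The two inequalities together give $\lm^2\al_1\cong_\nu\mu^2\al_2$, and then the strictness conditions force both $\lm^2\al_1$ and $\mu^2\al_2$ tangible, hence $\lm,\mu,\al_1,\al_2\in\tT$ (a product/square in a supertropical semiring is tangible iff all factors are, since $\tG$ is an ideal). Conversely, if $\lm,\mu,\al_1,\al_2\in\tT$ and $\lm^2\al_1\cong_\nu\mu^2\al_2$, then $q(x)=e\lm^2\al_1\in\tG$, and for any $x'<x$ at least one coordinate strictly drops in $\nu$-value by \propref{prop:I.6.3}, making that coordinate's contribution strictly $\nu$-smaller, hence $q(x')\le_\nu$ the other term $<_\nu q(x)$ or $q(x')$ is a strictly smaller ghost — either way $q(x')<q(x)$ by \propref{prop:I.6.3}(a). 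I would also note the parenthetical: $q(x)=e\lm^2\al_1\in\tG$, so any such $x$ is $g$-isotropic.

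\textbf{Main obstacle.} The delicate point is the "strictness" bookkeeping: when $\lm^2\al_1\cong_\nu\mu^2\al_2$ but one of them is ghost, one gets $q(\lm\veps_1)=q(x)$ with $\lm\veps_1<x$, destroying minimality — this is exactly why tangibility of all four elements is needed, and conversely why tangibility suffices. Getting the case analysis in \propref{prop:I.6.3}(a),(b) applied correctly to decide when a strictly-smaller coordinate vector yields an equal-or-smaller $q$-value, versus a strictly smaller one, is where care is required; everything else is a direct unwinding of the definitions.
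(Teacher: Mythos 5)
Your proposal is correct and follows essentially the same route as the paper's proof: both parts are settled by unwinding the minimal ordering via Proposition~\ref{prop:I.6.3} together with the quasilinear formula, ruling out minimality whenever a coordinate subvector (or a tangible $\nu$-equivalent replacement of a ghost coordinate) attains the same $q$-value, and in the converse direction using that a tangible element lies strictly below its $\nu$-equivalent ghost. Only the final case split in your sufficiency argument for b) is worded loosely (when exactly one coordinate drops, $q(x')$ equals the surviving tangible term, which is $\cong_\nu q(x)$ rather than $<_\nu q(x)$), but the tool you invoke, Proposition~\ref{prop:I.6.3}(a), covers precisely this case, just as in the paper's argument.
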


\begin{proof}
a): Let $\al _1:=q(\veps_1)$ and $x:=\lm \veps_1\in V.$ We have $q(x)=\lm ^2\al _1.$ Assume
first that $\al _1\in\tT.$ If $x'=\lm '\veps_1$ is a second vector, then $x'<x$ iff $\lm '<\lm $
iff $\lm '{}^2\al _1<\lm ^2\al _1.$ Thus $x$ is $q$-minimal. Assume now that $\al _1\in\tG.$ If
$\lm \in\tG,$ there exists $\lm '\in\tT$ with $\lm '\cong_\nu\lm ,$ and then $\lm '<
\lm .$ For $x'=\lm '\veps_1$ we have $x'<x,$ but $q(x')=\lm '{}^2\al _1=\lm ^2\al _1=q(x).$
Thus $x$ is not $q$-minimal. If $\lm \in\tT$ and $\lm '<\lm $ then $\lm '<_\nu\lm $ (cf.
\eqref{eq:I.6.11}); hence $$q(x')=\lm '{}^2\al _1<_\nu\lm ^2\al _1=q(x),$$ and a fortiori $q(x')<q(x).$ Thus $x$
is $q$-minimal.
\pSkip

 b): We have $q(x)=\lm ^2\al _1+\mu^2\al _2.$ If $q(x)=0,$ then $x$ is not $q$-minimal (cf. Example \ref{examps:II.9.2}.ii).

Assume now that $q(x)\ne0.$ If $\lm ^2\al _1<_\nu\mu^2\al _2$ then $q(x)=\mu^2\al _2=q(\mu\veps_2),$
and $x$ is not $q$-minimal, ditto if $\lm ^2\al _1>_\nu\mu^2\al _2.$ Assume henceforth that $\lm ^2\al _1
\cong_\nu\mu^2\al _2.$ Then $q(x)\in\tG$ and $\al _1\ne0,$ $\al _2\ne0.$ If $\lm ^2\al _1$ or
$\mu^2\al _2$ is ghost, then $q(x)=q(\lm \veps_1),$ resp. $q(x)=q(\mu\veps_2),$ and thus $x$ is not
$q$-minimal. We are left with the case that both $\lm ^2\al _1,$ $\mu^2\al _2$ are tangible. This means that
$\lm ,\mu,\al _1,\al _2\in \tT.$

If $x'<x,$ then either $x'\le \lm '\veps_1+\mu\veps_2$ or $x'\le \lm \veps_1+\mu'\veps_2$
with $\lm '<\lm $, resp. $\mu'<\mu.$ In the first case, $\lm '<_\nu\lm  $ (cf.  \eqref{eq:I.6.11}),
hence $\lm '{}^2\al _1<_\nu\lm ^2\al _1\cong_\nu\mu^2\al _2,$ and
$$q(x')\leq q(\lm '\veps_1+\mu\veps_2)=\mu^2\al _2<e\mu^2\al _2=q(x).$$
In the second case, $q(x')<q(x)$ for the same reason. Thus $x$ is $q$-minimal.
\end{proof}

\emph{We now assume that $\tG$ is a cancellative monoid under multiplication and $\tG= e \tT$}, furthermore that $(q,b)$ is a quadratic pair on the free binary module
$V:=R\veps_1+R\veps_2.$ We search for all $q$-minimal vectors in $V$ with full support.

Let $\al _1:=q(\veps_1),$ $\al _2:=q(\veps_2),$ $\bt :=b(\veps_1,\veps_2),$ and $x = x_1 \veps_1 + x_2 \veps_2$ with $x_1 \neq 0$, $x_2 \neq 0$. Then
\begin{equation}\label{eq:sstr}
q(x) = \al_1 x_1^2 + \bt x_1 x_2 + \al_2 x_2^2.
\tag{$\ast \ast$}
\end{equation}
Looking at the $\nu$-dominant terms in the sum \eqref{eq:sstr} we will run through several cases and will easily find out when $x$ is $q$-minimal.

\begin{enumerate} \dispace
\item[0)] Assume that $\al_1 x_1^2$ (or $\al_2 x_2^2$) is the only $\nu$-dominant term. Then $q(x) = q(x_1  \veps_1)$ or $q(x) = q(x_2  \veps_2)$. Clearly $x$  is not $q$-minimal.

\item[1)] Assume that both  $\al_1 x_1^2$ and  $\al_2 x_2^2$ are $\nu$-dominant.
If, say,  $\al_1 x_1^2$ is ghost, then $q(x) = q(x_1  \veps_1)$ again, and $x$  is not $q$-minimal.
If  both  $\al_1 x_1^2$ and  $\al_2 x_2^2$ are tangible, then for a vector
$ x' = x'_1 \veps_1 + x'_2 \veps_2 < x$   either $x'_1 < x_1$ or $x'_2 < x_2$, which implies
$x'_1 <_\nu x_1$ or $x'_2 <_\nu x_2$, since both $x_1', x'_2$ are tangible. We conclude that
$q(x') < q(x)$. Thus $x$ is $q$-minimal iff $\al_1, \al_2, x_1, x_2$ are all tangible.

\item[2)] Assume that  $\al_1 x_1^2 \nucong \bt x_1 x_2 > \al_2 x_2^2$. Then $q(x) = e \al_1 x_1^2 = e \bt x_1 x_2 \in \tG$. If $\al_1 x_1^2 \in \tG$, then choosing $x'_1\in \tT$ with $e x'_1 = x_1$ we obtain a vector $ x' = x'_1 \veps_1 + x_2 \veps_2  < x$ with $q(x') = \al_1 'x_1^2 + \bt x'_1 x_2 = q(x)$, and so $x$ is not $q$-minimal.

Assume now that $\al_1 x_1^2 \in \tT$. If $ x' = x'_1 \veps_1 + x'_2 \veps_2  < x$, then either
$x'_1 < x_1$, $x'_2 \leq x_2$, or $x'_1 = x_1$, $x'_2 < x_2$.
If $x'_1 < x_1$, then $x'_1 <_\nu x_1$, whence
$\al_1 x_1'^2 <_\nu \al_1 x_1^2$  $\bt x'_1 x_2 <_\nu \bt x_1 x_2  $, and we see that $q(x')< q(x)$. But if
$x'_1 = x_1$, $x'_2 < x_2$, $ex'_2 = x_2$, and $\bt \in \tG$, then
$q(x') = q(x)$, while if $\bt \in \tT$ this cannot happen. We conclude that $x$ is $q$-minimal iff $\al_1, \bt, x_1$ are all tangible.

\item[3)] Analogously, if $\al_2 x_2^2 \nucong \bt x_1 x_2 > \al_1 x_1^2$, then $x$ is $q$-minimal iff
$\al_2, \bt, x_2$ are all tangible.

\item[4)] Assume that  $\al_1 x_1^2 <_\nu \bt x_1 x_2$, $\al_2 x_2^2 <_\nu \bt x_1 x_2$. Now  $q(x) = \bt x_1 x_2 .$
Arguing similarly as in Case 3), we see that, when $\bt \in \tG$ then $x$ is
 $q$-minimal iff $x_1 \in \tT$ and $x_2 \in \tT$, while when $\bt \in \tT$, then $x$ is
 $q$-minimal iff $x_1 \in \tT$ or $x_2 \in \tT$. Thus all together  $x$ is
 $q$-minimal iff at most one of the elements $\bt, x_1, x_2$ is ghost.
\end{enumerate}

Summarizing we obtain
\begin{thm}\label{thm:II.9.7}
Assume that $V$ is free with base $\veps_1, \veps_2$ and $x = x_1 \veps_1 +  x_2 \veps_2$ with $x_1 \neq 0$,  $x_2 \neq 0$.
Let  $q = \left[\begin{smallmatrix} \al_1 & \bt \\  &\al_2\end{smallmatrix}\right]$. Then  $x$ is $q$-minimal exactly in the following cases:

\begin{enumerate} \dispace
\item[1)] $\al_1 x_1^2 \nucong  \al_2 x_2^2 \geq_\nu  \bt x_1 x_2$ and  $\al_1, \al_2, x_1, x_2 \in \tT$;
\item[2)] $\al_1 x_1^2 \nucong  \bt x_1 x_2 >_\nu \al_2 x_2^2   $ and  $\al_1, \bt, x_1 \in \tT$;
\item[3)] $\al_2 x_2^2 \nucong  \bt x_1 x_2 >_\nu \al_1 x_1^2   $ and  $\al_2, \bt, x_2 \in \tT$;
\item[4)] $\bt x_1 x_2 >_\nu \al_1 x_1^2 +    \al_2 x_2^2   $ and  at most one of the elements $ \bt, x_1, x_2$ is ghost.
\end{enumerate}
\end{thm}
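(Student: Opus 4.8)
The plan is to collect the four cases already analysed in the preceding enumerated discussion (items 0)–4) just before the theorem) and to observe that they exhaust all the $\nu$-dominance patterns of the three terms $\al_1 x_1^2$, $\bt x_1 x_2$, $\al_2 x_2^2$ in the expansion $(\ast\ast)$. Since $x_1,x_2$ are both nonzero, each of the three products is either $0$ (when the corresponding coefficient $\al_1$, $\bt$, or $\al_2$ vanishes) or has a well-defined $\nu$-value; the sum has either a unique $\nu$-dominant term or two (but, by bipotence of $eR$, never all three $\nu$-equal with the middle one strictly dominated — one checks that $\al_1 x_1^2 \nucong \al_2 x_2^2 \nucong \bt x_1 x_2$ would force all three equal). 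So one runs through: (I) $\al_1 x_1^2$ alone dominant, or $\al_2 x_2^2$ alone dominant — this is Case 0), not $q$-minimal; (II) $\al_1 x_1^2 \nucong \al_2 x_2^2 \nuge \bt x_1 x_2$, the two outer terms tie — Case 1), giving conclusion 1); (III) $\al_1 x_1^2 \nucong \bt x_1 x_2 \nug \al_2 x_2^2$ — Case 2), giving 2); symmetrically (IV) — Case 3), giving 3); (V) $\bt x_1 x_2 \nug \al_1 x_1^2 + \al_2 x_2^2$ (i.e. strictly dominates both outer terms) — Case 4), giving 4). The bookkeeping is to check that the hypotheses $\al_1 x_1^2 \nucong \al_2 x_2^2 \nuge \bt x_1 x_2$, etc., displayed in the theorem are exactly the union of the sub-subcases in items 1)–4), and that $q(x)\neq 0$ is automatic in each of 1)–4) while $q(x)=0$ can only occur inside the non-$q$-minimal Case 0).

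For the content of each case I would re-use the three recurring sub-arguments already rehearsed in the text. First, the \emph{ghost-collapse} argument: whenever the unique dominant term (or a tied pair) of $q(x)$ is ghost and ``realised'' already by a sub-vector $x_1\veps_1$, $x_2\veps_2$, or by replacing a ghost coordinate $x_i$ by a tangible $x_i'$ with $ex_i'=x_i$ (using $\tG = e\tT$), we produce $x'<x$ with $q(x')=q(x)$, so $x$ fails to be $q$-minimal — this is what rules out the ``$\in\tG$'' options for the coefficients and coordinates. Second, the \emph{strict-drop} argument: if all the terms constituting $q(x)$ are tangible and all the relevant $\al$'s and $x_i$'s are tangible, then any $x'<x$ has $x_1'<_\nu x_1$ or $x_2'<_\nu x_2$ (by \eqref{eq:I.6.11}, since tangible elements are $\nu$-incomparable only when equal), hence every term of $q(x')$ is $\nu$-strictly below the corresponding term of $q(x)$, so $q(x')<_\nu q(x)$ and a fortiori $q(x')<q(x)$; this gives $q$-minimality. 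Third, in Case 4) with $\bt$ tangible there is the subtler point that only one coordinate needs lowering to a $\nu$-strict drop while the other may stay ghost: shrinking a ghost coordinate $x_i$ to tangible $x_i'$ with $ex_i'=x_i$ leaves the product $\bt x_1 x_2$ unchanged in $\nu$-value but — crucially — changes it from ghost to tangible only if \emph{both} factors involved are ghost, so if at most one of $\bt,x_1,x_2$ is ghost, no such non-$q$-minimalising $x'$ exists, whereas if two of them are ghost one does. This ``at most one ghost'' clause, and its interplay with whether $\bt$ is tangible, is the one genuinely delicate bit of combinatorics.

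The main obstacle is therefore \emph{case management rather than any deep idea}: making sure the five $\nu$-patterns really are mutually exclusive and exhaustive, that the stated inequalities in the four conclusions capture precisely the right dominance relations (note the non-strict $\nuge$ in 1), the strict $\nug$ in 2)–3), and the ``$\bt x_1 x_2 >_\nu \al_1 x_1^2 + \al_2 x_2^2$'' shorthand in 4) meaning strictly above the sum, equivalently strictly above each summand), and that in each case the ``iff'' is proved in both directions — the ``only if'' via one of the three collapse constructions and the ``if'' via the strict-drop argument. I would also flag at the outset the two standing hypotheses actually used: $\tG$ cancellative (needed so that $\nu$-values multiply without surprises, e.g. to pass from $\al_1 x_1'^2 <_\nu \al_1 x_1^2$ cleanly) and $\tG = e\tT$ (needed to produce tangible representatives of ghost values when building $x'$).

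\begin{proof}
All four statements are simply a transcription of the case-by-case discussion in items 0)--4) above, so here we only indicate how the cases combine.

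Write $q(x) = \al_1 x_1^2 + \bt x_1 x_2 + \al_2 x_2^2$ as in \eqref{eq:sstr}. Since $x_1 \neq 0$ and $x_2 \neq 0$, each of the three summands is either $0$ or has a well-defined $\nu$-value, and by bipotence of $eR$ the sum has either a unique $\nu$-dominant term or exactly two $\nu$-dominant terms; note that $\al_1 x_1^2 \nucong \bt x_1 x_2 \nucong \al_2 x_2^2$ forces all three to be $\nu$-equal, so the three possible ``two-term'' ties are: $\al_1 x_1^2 \nucong \al_2 x_2^2 \nuge \bt x_1 x_2$; $\al_1 x_1^2 \nucong \bt x_1 x_2 >_\nu \al_2 x_2^2$; and its mirror image. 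Together with the cases ``$\al_1 x_1^2$ alone dominant'', ``$\al_2 x_2^2$ alone dominant'', and ``$\bt x_1 x_2$ strictly dominates both outer terms'', these exhaust all $\nu$-patterns.

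If $\al_1 x_1^2$ (or $\al_2 x_2^2$) is the only $\nu$-dominant term, then $q(x) = q(x_1\veps_1)$ (or $q(x_2\veps_2)$), and since $x_1\veps_1 < x$ (resp. $x_2\veps_2 < x$) the vector $x$ is not $q$-minimal; this is item 0), and it also absorbs the possibility $q(x) = 0$. Hence a $q$-minimal $x$ with full support falls into one of Cases 1)--4).

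In Case 1) ($\al_1 x_1^2 \nucong \al_2 x_2^2 \nuge \bt x_1 x_2$), by item 1) the vector $x$ is $q$-minimal iff the common dominant value $\al_1 x_1^2 \nucong \al_2 x_2^2$ is tangible \emph{and} cannot be shrunk, which forces $\al_1, \al_2, x_1, x_2 \in \tT$; conversely, when these four elements are tangible every $x' < x$ has $x_1' <_\nu x_1$ or $x_2' <_\nu x_2$ by \eqref{eq:I.6.11}, so $q(x') <_\nu q(x)$ and in particular $q(x') < q(x)$. This is exactly conclusion 1). Cases 2) and 3) are handled identically using items 2) and 3): when the tied pair is $\al_1 x_1^2 \nucong \bt x_1 x_2$ (strictly dominating $\al_2 x_2^2$), $q$-minimality is equivalent to $\al_1, \bt, x_1 \in \tT$, and symmetrically for 3).

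Finally, in Case 4) ($\bt x_1 x_2 >_\nu \al_1 x_1^2 + \al_2 x_2^2$) we have $q(x) = \bt x_1 x_2$. If $\bt \in \tT$, then shrinking a single ghost coordinate $x_i$ to a tangible $x_i'$ with $ex_i' = x_i$ (available since $\tG = e\tT$) does not change the $\nu$-value $\bt x_1 x_2$ but leaves it tangible, so such an $x' < x$ still satisfies $q(x') = q(x)$ exactly when a ghost coordinate is present; hence $x$ is $q$-minimal iff $x_1 \in \tT$ or $x_2 \in \tT$. If $\bt \in \tG$, then $\bt x_1 x_2 \in \tG$ regardless, and $x$ is $q$-minimal iff $x_1 \in \tT$ and $x_2 \in \tT$. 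Combining the two sub-cases, $x$ is $q$-minimal iff at most one of $\bt, x_1, x_2$ is ghost, which is conclusion 4). (In each case, when the relevant elements are tangible the strict-drop argument via \eqref{eq:I.6.11} shows that any genuine $x' < x$ has $q(x') < q(x)$, so these conditions are also sufficient; cancellativity of $\tG$ is used to pass freely between $\nu$-values of the monomials.)
\end{proof}
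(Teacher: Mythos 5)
Your proposal follows the paper's route exactly: the paper's proof of Theorem \ref{thm:II.9.7} is literally the summary (``Summarizing we obtain'') of the case discussion 0)--4) preceding it, and your write-up reproduces that discussion -- unique dominant term (not minimal), the two possible ties, the case where $\bt x_1x_2$ strictly dominates -- together with the same ghost-collapse construction (using $\tG=e\tT$), the same strict-drop argument via \eqref{eq:I.6.11}, and the same split on $\bt\in\tT$ versus $\bt\in\tG$ that yields the ``at most one of $\bt,x_1,x_2$ ghost'' clause in case 4). One intermediate step is stated too strongly: in your Case 1) sufficiency argument you claim that $x_1'<_\nu x_1$ or $x_2'<_\nu x_2$ forces $q(x')<_\nu q(x)$. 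This fails when only one coordinate drops, e.g.\ $x_1'<_\nu x_1$, $x_2'=x_2$ gives $q(x')=\al_2x_2^2\nucong q(x)$; the correct conclusion $q(x')<q(x)$ comes instead from the fact that $q(x')$ is then tangible while $q(x)=e\al_1x_1^2$ is the ghost of the same $\nu$-value (cf.\ \eqref{eq:I.6.9}, \eqref{eq:I.6.10}), which is how the paper argues (it claims only $q(x')<q(x)$, never $<_\nu$). The same caveat applies to the ``every term drops $\nu$-strictly'' phrasing in your preamble, and your Case 4) sentence about when $q(x')=q(x)$ after shrinking a ghost coordinate should read: exactly when the \emph{other} factor of $\bt x_1x_2$ is still ghost (i.e.\ both $x_1,x_2$ ghost when $\bt\in\tT$). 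With these small repairs your argument coincides with the paper's.
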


\begin{comment} In order to clarify the situation observe that in Cases 2)-4) we have
$\al_1 x_1^2 \cdot \al_2 x_2^2 < _\nu  (\bt x_1 x_2)^2$, whence $\al_1 \al_2 < _ \nu \bt ^2$, while in Case 1)
we have $\bt ^2 <_\nu \al_1 \al_2$. Thus $(\veps_1, \veps_2)$ is excessive w.r. to $q$ in Cases 2)-4), but quasilinear in Case 1).
\end{comment}

Concerning  $g$-anisotropic vectors we note the following immediate consequence of Theorem~\ref{thm:II.9.7}.
\begin{cor}\label{cor:II.9.8}
Assume again that  $x = x_1 \veps_1 +  x_2 \veps_2$ and $q = \left[\begin{smallmatrix} \al_1 & \bt \\  &\al_2\end{smallmatrix}\right]$. Then  $x$ is $q$-minimal and $g$-anisotropic iff $\bt, x_1, x_2$ are tangible and
$\al_1^2 x_1^2 + \al_2^2 x_2^2 < _\nu \bt x_1 x_2.$
\end{cor}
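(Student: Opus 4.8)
The plan is to derive Corollary~\ref{cor:II.9.8} directly from Theorem~\ref{thm:II.9.7} by inspecting, in each of its four cases, whether the $q$-minimal vector $x=x_1\veps_1+x_2\veps_2$ (with $x_1,x_2\ne 0$, as in the theorem) is $g$-anisotropic, i.e.\ whether $q(x)\in\tT$. Since $q(x)=\al_1x_1^2+\bt x_1x_2+\al_2x_2^2$ by \eqref{eq:sstr} and $q(x)\ne 0$ throughout Theorem~\ref{thm:II.9.7}, the value $q(x)$ is tangible precisely when this sum has a unique $\nu$-dominant summand which is itself tangible, and it is ghost in all other cases.

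First I would discard cases 1)--3). In each of them two of the three summands of \eqref{eq:sstr} are $\nu$-equivalent and jointly $\nu$-dominant, so by the supertropical addition rule \eqref{eq:0.4} the sum of those two equals $e$ times one of them; explicitly $q(x)=e\,\al_1x_1^2=e\,\al_2x_2^2$ in case 1), $q(x)=e\,\al_1x_1^2=e\,\bt x_1x_2$ in case 2), and symmetrically in case 3). The element being doubled is nonzero (in case 1) because $\al_1,x_1\in\tT$, and likewise in cases 2), 3)), so \eqref{eq:0.7} gives $q(x)\in eR\sm\00=\tG$; hence such an $x$ is $g$-isotropic and contributes nothing. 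Therefore a $q$-minimal, $g$-anisotropic vector of full support can only arise from case 4).

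Finally I would treat case 4). There $\bt x_1x_2$ is the \emph{unique} $\nu$-dominant summand of \eqref{eq:sstr}, so $q(x)=\bt x_1x_2$, and this lies in $\tT$ exactly when $\bt,x_1,x_2$ are all tangible (a fact already used implicitly in the case analysis preceding Theorem~\ref{thm:II.9.7}; note that as soon as one factor lies in $\tG$, the product lies in the ideal $eR$, hence in $\tG$). Assuming $\bt,x_1,x_2\in\tT$, the side condition ``at most one of $\bt,x_1,x_2$ is ghost'' of case 4) holds automatically, so the only remaining restriction is the defining inequality of case 4), namely $\al_1x_1^2+\al_2x_2^2<_\nu\bt x_1x_2$; conversely, if $\bt,x_1,x_2\in\tT$ and this inequality holds, Theorem~\ref{thm:II.9.7} places $x$ in case 4) and $q(x)=\bt x_1x_2\in\tT$. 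Combining this with the previous paragraph, $x$ is $q$-minimal and $g$-anisotropic iff $\bt,x_1,x_2\in\tT$ and $\al_1x_1^2+\al_2x_2^2<_\nu\bt x_1x_2$, which is the asserted characterization. I do not expect a genuine obstacle here; the only slightly delicate points are the bookkeeping in cases 1)--3) to see that the doubled dominant summand is genuinely nonzero (so that $q(x)$ is a bona fide ghost), and recording that $\bt x_1x_2$ is tangible when all three of its factors are.
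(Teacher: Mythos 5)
Your proposal is correct and follows exactly the route the paper intends: Corollary~\ref{cor:II.9.8} is stated as an immediate consequence of Theorem~\ref{thm:II.9.7}, obtained precisely by your case inspection (in cases 1)--3) the two $\nu$-equivalent dominant summands force $q(x)=e\cdot(\text{nonzero})\in\tG$, so only case 4) survives, where $q(x)=\bt x_1x_2$ is tangible exactly when $\bt,x_1,x_2\in\tT$). Note that the inequality you derive, $\al_1x_1^2+\al_2x_2^2<_\nu\bt x_1x_2$, is the one the paper itself invokes when applying the corollary later (proofs of Theorems~\ref{thm:7.3} and \ref{thm:7.5}); the extra squares on $\al_1,\al_2$ in the printed statement of the corollary appear to be a typo, so your version is the intended one.
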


Returning to the tables of $q$-values in \S\ref{sec:II.7} it is of interest to ask which of the vectors
$\lm \veps_1 + \mu \veps_2$ there are $q$-minimal. We only consider the case that $\al^2 > _\nu \al_1 \al_2$ in the notations used there, since otherwise $q$ is quasilinear and the matter is settled by Proposition \ref{prop:II.9.6}.b.

\begin{thm}\label{thm:II.9.9}
Assume that $R$ is a nontrivial tangible supersemifield, and $q$ is a quadratic form on the free binary $R$-module
$V=R\veps_1+R\veps_2.$ Let $b$ be a companion of $q$, and assume that $\al _1\al _2<_\nu\al ^2$ with
$\al _1:=q(\veps_1),$ $\al _2:=q(\veps_2),$ $\al :=b(\veps_1,\veps_2).$ We use Notations
\ref{notat:II.7.1} and \ref{notat:II.7.6}. Let $x=\lm \veps_1+\mu\veps_2$ with $\lm ,\mu\ne0.$
\begin{enumerate}
\item[i)] If $\al _1\ne0,$ $\al _2\ne0$, then $x$ is $q$-minimal iff either $\lm \cong_\nu\zeta\mu$ and
$\al _1, \lm \in\tT$, or $\lm \cong_\nu\eta\mu$ and $\al _2 ,\mu\in\tT$, or
$\eta\mu<_\nu\lm <_\nu\zeta\mu$ and at most one of the three elements $\al ,\lm ,\mu$ is ghost
\pSkip

\item[ii)] If $\al _1\ne0$, $\al _2=0,$ then $x$ is $q$-minimal iff either $\lm \cong_\nu\zeta\mu$ and
$\al _1,\lm \in\tT,$ or $\lm <_\nu\zeta\mu$ and at most one of the elements $\al ,\lm $ is
ghost.
\pSkip
\item[iii)] If $\al _1=\al _2=0,$ then $x$ is $q$-minimal iff at most one of the elements $\al ,\lm ,\mu$ is
ghost.
\end{enumerate}
\end{thm}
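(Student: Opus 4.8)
The plan is to reduce Theorem \ref{thm:II.9.9} to the already-established Theorem \ref{thm:II.9.7} by choosing the \emph{right} companion for $q$ restricted to $R\veps_1+R\veps_2$. Since $\al_1\al_2<_\nu\al^2$ we are in one of the excessive Cases IA, IIA, IIIA, IIIB, IV, V of Convention \ref{conv:II.7.3}/\ref{conv:II.7.6}, so by Convention \ref{conv:II.7.8} the bilinear form $b$ with $b(\veps_1,\veps_1)=b(\veps_2,\veps_2)=0$, $b(\veps_1,\veps_2)=\al$ is a companion of $q$ (and the matrix of $q$ with respect to this companion is $\left[\begin{smallmatrix}\al_1&\al\\&\al_2\end{smallmatrix}\right]$). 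Now apply Theorem \ref{thm:II.9.7} with $x_1=\lm$, $x_2=\mu$ and $\bt=\al$: a vector $x=\lm\veps_1+\mu\veps_2$ with $\lm,\mu\ne0$ is $q$-minimal precisely in the four cases listed there.

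The next step is bookkeeping: translating those four cases into the $\zt,\eta$-language of Notations \ref{notat:II.7.1} and \ref{notat:II.7.6}, separately in the three regimes (i) $\al_1,\al_2\ne 0$, (ii) $\al_1\ne0,\al_2=0$, (iii) $\al_1=\al_2=0$. In regime (i), recall $\al\cong_\nu\zt\al_1$ and $\al_2\cong_\nu\eta\al$, so $\al_1\al_2\cong_\nu \eta\zt\al_1^2$ and $\al^2\cong_\nu\zt^2\al_1^2$; hence $\al_1\lm^2\cong_\nu\al\lm\mu$ iff $\al_1\lm\cong_\nu\al\mu\cong_\nu\zt\al_1\mu$ iff $\lm\cong_\nu\zt\mu$, and similarly $\al_2\mu^2\cong_\nu\al\lm\mu$ iff $\lm\cong_\nu\eta\mu$. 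Comparing $\nu$-dominance of $\al_1\lm^2$, $\al\lm\mu$, $\al_2\mu^2$ then shows: Case 2 of Theorem \ref{thm:II.9.7} ($\al_1\lm^2\cong_\nu\al\lm\mu>_\nu\al_2\mu^2$, with $\al_1,\al,\lm$ tangible) becomes ``$\lm\cong_\nu\zt\mu$ and $\al_1,\lm\in\tT$'' — note that once $\al_1$ and $\lm$ are tangible and $\al_1\lm^2\cong_\nu\al\lm\mu$, tangibility of $\al$ is automatic (as $\al_1\lm^2\in\tG$ would force $q(x)\in\tG$... careful: actually $\al=\zt\al_1$ is tangible when $\al_1$ is, using $\tG=e\tT$ and $\zt\in\tT$; one should spell this out). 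Case 3 symmetrically becomes ``$\lm\cong_\nu\eta\mu$ and $\al_2,\mu\in\tT$''. Case 4 ($\al\lm\mu>_\nu\al_1\lm^2+\al_2\mu^2$, at most one of $\al,\lm,\mu$ ghost) becomes ``$\eta\mu<_\nu\lm<_\nu\zt\mu$ and at most one of $\al,\lm,\mu$ is ghost''. Case 1 ($\al_1\lm^2\cong_\nu\al_2\mu^2\ge_\nu\al\lm\mu$ with everything tangible) \emph{cannot occur}, since $\al_1\lm^2\cong_\nu\al_2\mu^2$ would give $\lm^2\cong_\nu\eta\zt\mu^2$, so $\al\lm\mu\cong_\nu\zt\al_1\lm\mu$ and $(\al\lm\mu)^2\cong_\nu\zt^2\al_1^2\lm^2\mu^2\cong_\nu\zt^2\al_1^2\cdot\eta\zt\mu^4 = \al_1\lm^2\cdot\al_2\mu^2\cdot(\zt/\eta)$ with $\zt/\eta>e$ by \eqref{eq:II.7.4}, contradicting $\al\lm\mu\le_\nu\al_1\lm^2$. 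This matches (i).

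For regime (ii), Case V, Case 1 and Case 3 of Theorem \ref{thm:II.9.7} are impossible (they need $\al_2\ne0$), leaving Case 2 (``$\lm\cong_\nu\zt\mu$ and $\al_1,\lm\in\tT$'', again $\al\in\tT$ automatic since $\al\cong_\nu\zt\al_1$) and Case 4 (``$\al_1\lm^2<_\nu\al\lm\mu$, i.e.\ $\lm<_\nu\zt\mu$, and at most one of $\al,\lm,\mu$ ghost'') — and here one should check, using $\al_2=0$ and the table in Proposition \ref{prop:II.7.7}.i, that the condition on $\mu$ drops out, i.e.\ $\mu$ may be ghost, which is why the statement (ii) only mentions $\al,\lm$. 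This is the delicate point: in Case IV the two conditions ``$\lm<_\nu\zt\mu$'' and ``$\lm\cong_\nu\zt\mu$'' of \eqref{eq:II.7.11} exhaust the possibilities with $\lm\not>_\nu\zt\mu$, and when $\lm<_\nu\zt\mu$ we have $q(x)=\lm\mu\al$ so $q$-minimality is the ``at most one of $\al,\lm,\mu$ ghost'' condition from the Case-4 analysis, but one must re-examine whether the $\mu$-direction decrease argument really does force strict $\nu$-drop — it does not when $\mu$ is ghost but $\al,\lm$ tangible is already excluded... I would re-derive this directly from $(\ast\ast)$ rather than quoting Theorem \ref{thm:II.9.7} verbatim. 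For regime (iii), Case V: only Case 4 survives (by Proposition \ref{prop:II.7.7}.ii, $q(\lm\veps_1+\mu\veps_2)=\lm\mu\al$ always), giving ``at most one of $\al,\lm,\mu$ ghost''. The main obstacle is thus not the structure of the argument but the careful case-by-case verification of tangibility side-conditions — in particular confirming that the ``at most one ghost'' clauses transfer correctly and that the redundant conditions (tangibility of $\al$ in cases 2,3; the absence of a condition on $\mu$ in (ii)) are genuinely redundant; here the hypothesis $\tG=e\tT$ and cancellativity of $\tG$ are used, exactly as flagged before Theorem \ref{thm:II.9.7}. I would write it as: ``Apply Theorem \ref{thm:II.9.7} with $\bt=\al$ (legitimate by Convention \ref{conv:II.7.8}), then translate each of the four cases via Notations \ref{notat:II.7.1}, \ref{notat:II.7.6}, using \eqref{eq:II.7.4}; in cases (ii),(iii) the cases of Theorem \ref{thm:II.9.7} requiring $\al_2\ne0$ are vacuous, and a short direct check with $(\ast\ast)$ shows the surviving conditions simplify as claimed.''
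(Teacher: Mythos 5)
Your strategy coincides with the paper's: its proof of Theorem \ref{thm:II.9.9} consists precisely of the instruction to browse the tables \eqref{eq:II.7.7}, \eqref{eq:II.7.8}, \eqref{eq:II.7.11}, \eqref{eq:II.7.12} and to apply Theorem \ref{thm:II.9.7} with $x_1,x_2,\bt$ read as $\lm,\mu,\al$, and your translation of its four cases into the $\zt,\eta$-language (including the correct observation that case 1) is excluded by $\al_1\al_2<_\nu\al^2$, and that case 4) becomes $\eta\mu<_\nu\lm<_\nu\zt\mu$ with the ``at most one ghost'' clause carried over) is exactly the intended bookkeeping.

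The gap lies where you yourself hesitate. Cases 2)--3) of Theorem \ref{thm:II.9.7} carry the side condition $\bt\in\tT$, i.e.\ $\al\in\tT$, while the clauses ``$\lm\cong_\nu\zt\mu$, $\al_1,\lm\in\tT$'' and ``$\lm\cong_\nu\eta\mu$, $\al_2,\mu\in\tT$'' of Theorem \ref{thm:II.9.9} do not mention $\al$; your discharge of this condition rests on the claim that $\al=\zt\al_1$ is tangible because $\al_1$ is, but Notation \ref{notat:II.7.1} only provides $\al\cong_\nu\zt\al_1$ with $\zt\in\tT$ (the equality $\al=\zt\al_1$ is arranged only in the sub-case where $\al_1,\al_2,\al$ are already all tangible, which is circular here), and a $\nu$-equivalence never transfers tangibility: $\al=e\zt\al_1$ is perfectly possible with $\al_1,\lm\in\tT$. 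Hence the conditions produced by a verbatim application of Theorem \ref{thm:II.9.7} do not literally coincide with the ones you are asked to prove, and the reconciliation is not a formality: in these border clauses two summands of $q(\lm\veps_1+\mu\veps_2)$, one diagonal and one mixed, are $\nu$-dominant, and whether a vector $x'<x$ (for instance one obtained by lowering a ghost coordinate to a tangible $\nu$-equivalent one) keeps $q(x')=q(x)$ is governed by the tangibility of the factors entering both dominant products, not by any formal identity between $\al$ and $\zt\al_1$. This is exactly the direct re-examination of $(\ast\ast)$ that you announce (``I would re-derive this directly'') both here and for part (ii), but never carry out; as written, the proposal therefore reduces Theorem \ref{thm:II.9.9} to Theorem \ref{thm:II.9.7} plus two unverified reconciliation claims rather than proving it. Be aware that the paper's own proof is just as terse at this point, so the deferred check is where the real content sits; when you perform it, treat the clauses in which $\mu$ (resp.\ $\lm$) is ghost with particular care, since lowering that coordinate to its tangible partner preserves both $\nu$-dominant summands, and hence the ghost value of $q$, so the tangibility side conditions in these boundary cases do not come out by simply deleting $\al\in\tT$.
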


\begin{proof} Browse through tables \eqref{eq:II.7.7}, \eqref{eq:II.7.8}, \eqref{eq:II.7.11}, \eqref{eq:II.7.12} and apply Theorem \ref{thm:II.9.7}, reading $\lm, \mu, \al$ for $x_1, x_2, \bt$.
\end{proof}



%
%
%

\section{$q$-minimal vectors with big support}\label{sec:7}

Again we assume that $R$ is a tangible supertropical semiring, $\tG$ is a cancellative monoid,  $V$ is
a free $R$-module with base $(\veps_i \ds|i\in I)$,  and $q:V\to
R$ is a quadratic form. For later use, we adopt the following
notation.

\begin{notation}\label{notation:7.1}
Let $x=\sum\limits_{i\in I} x_i\veps_i\in V$ and $ J $ a subset of $I.$ We put
$$x( J ):=\sum_{i\in  J } x_i\veps_i.$$
If $ J =\{i\}$ or $ J =\{i,j\},$ $i\ne j,$ we
 write for short $x(i)$ or $x(i,j)$ instead of $x(\{i\})$, $x(\{i,j
\}).$
\end{notation}

Assume now that $I=\{1,\dots,n\}$ with $n=3$ or $n=4$, and that
$x\in V$ is a vector of full support,
$$x=\sum_{i=1}^n x_i\veps_i,\quad\text{all}\quad x_i\ne0.$$
We choose a companion $b$ of $q,$ and then have a presentation
\begin{equation}\label{eq:7.1}
q(x)=\sum_{i=1}^n\al_ix_i^2+\sum_{i<j}\bt_{ij}x_ix_j.
\end{equation}
We ask, under which conditions is $x$ $q$-minimal, and then search
for possibilities to write $x$ as the supremum $y\vee z$ of two
$q$-minimal vectors $y,z\in V$ of small support, i.e., $|\supp
(y) |\le 2,$ $|\supp (z)|\le 2.$

As in \S\ref{sec:II.9}, we look for the $\nu$-dominant terms in the
sum \eqref{eq:7.1}. If there is only one dominant term,
$\al_ix_i^2$ or $\bt_{ij}x_ix_j,$ then $q(x)=q(x(i))$ or
$q(x)=q(x(i,j)),$ and so $x$ is not $q$-minimal. Henceforth, we
assume always that there are at least two dominant terms, and so
$q(x)\in\tG.$ Furthermore,  we assume that all $\nu$-dominant terms
are tangible, since otherwise again $q(x)=q(x( J ))$ for some
$ J \varsubsetneqq I.$

We first study the case $n=3 $ and run through several subcases,
as follows:

\begin{enumerate}
\item[A)] Assume that in \eqref{eq:7.1} there occurs a
$\nu$-dominant term $\al_ix_i^2.$ Then, if $x$ is $q$-minimal
there is exactly one further dominant terms $\bt_{jk}x_jx_k$ and
$(i,j,k)$ is a permutation of $(1,2,3), $ since otherwise again
$q(x)=q(x(J))$ for some $ J \varsubsetneqq I.$ We have
$$x=x(i)\vee x(j,k),$$
and $q(x(i))=\al_ix_i^2\in \tT,$
$$q(x(j,k))=\al_jx_j^2+\bt_{ik}x_jx_k+\al_kx_k^2\in \tT.$$
It follows that
$$\al_jx_j^2+\al_kx_k^2<_\nu \bt_{jk}x_jx_k,$$
and we read off from \thmref{thm:II.9.7}  that
$x(j,k)$ is $q$-minimal. By \propref{prop:II.9.6}.a also $x(i)$ is
$q$-minimal.
\end{enumerate}

Note furthermore that
$$b(x(i),x(j,k))<_\nu q(x(i)) \cong_\nu q(x).$$

Assume now that all $\nu$-dominant terms in the sum \eqref{eq:7.1}
are of the form $\bt_{ij}x_ix_j.$ We distinguish two subcases.
\begin{enumerate} \dispace
\item[B)] Exactly two of the terms $\bt_{ij}x_ix_j$ are
$\nu$-dominant. \item[C)] All three such terms are $\nu$-dominant.
\end{enumerate}

In Case B there is a permutation $(i,j,k)$ of $(1,2,3)$ such that
\begin{equation}\label{eq:7.2}
q(x)\cong_\nu\bt_{ij}x_ix_j\cong_\nu\bt_{ik}x_ix_k>_\nu\bt_{jk}x_jx_k,
\end{equation}
while in Case C we have
\begin{equation}\label{eq:7.3}
q(x)\cong_\nu\bt_{12}x_1x_2\cong_\nu\bt_{13}x_1x_3 \cong_\nu\bt_{23}x_2x_j.
\end{equation}

In both cases $q(x)>_\gamma \al_ix_i^2$ for all $i\in I.$ It
follows by \corref{cor:II.9.8} that in Case B both vectors $x(i,j)$
and $x(i,k)$ are $g$-anisotropic and $q$-minimal, while in Case C
all three vectors $x(1,2),$ $x(1,3)$, $x(2,3)$ have these
properties. Due to our knowledge of all $\nu$-dominant terms in
the sum \eqref{eq:7.1}, we see that in Case B
\begin{align*}
b(x(j),x(k))<_\nu q(x(i,j))\cong_\nu q(x(i,k))
&\cong_\nu q(x), \end{align*} while in Case C for every 2-element
subset $\{r,s\}$ of $I$ we have $b(x(r),x(s))\in \tT $ and
$$b (x(r),x(s))\cong_\nu q(x(r,s))\cong_\nu q(x).$$
\{Observe that $b(\veps_i,\veps_i)\le_\nu\al_i$,
cf. \cite[Ineq. (1.9)]{QF1}.\}

\begin{enumerate}
\item[D)] We turn to the case $n=4,$ which is easier. Assume that
$x$ is $q$-minimal. Then we have exactly two $\nu$-dominant terms
in the sum \eqref{eq:7.1}, $\bt_{ij}x_ix_j,$
$\bt_{i\ell}x_kx_\ell,$ with $\{i,j\}$ disjoint from
$\{k,\ell\}$, since otherwise there would exist a set
$S\varsubsetneqq I$ with $q(x(S))=q(x).$ Moreover, these terms are
tangible.

Arguing as above we conclude easily that there is a partition
$I=J\dot\cup K$ with $|J|=|K|=2,$ such that $x(J)$ and $x(K)$ are
$g$-anisotropic and $q$-minimal with
$$q(x(J))\cong_\nu q(x(K))\cong_\nu q(x),$$
while $q(x(S))<_\nu q(x)$ for all other subsets $S$ of $I$ with
$|S|\le 2.$ Also for any two different subsets $S,T$ of $I$ with
$|S|\le 2, $ $|T|\le 2$, including $S=J,$ $T=K,$ we have
$$b(x(S),x(T))<_\nu q(x).$$
\end{enumerate}

Summarizing the essentials of this study, we obtain

\begin{thm}\label{thm:7.2}
Assume that $x$ is $q$-minimal and $\supp(x)=I=\{1,\dots, n\}$
with $n\ge3.$ Then~$x$ is $g$-isotropic and exactly one of the
following four cases holds:

\begin{enumerate} \dispace
\item[A)] $n=3.$ There is a unique partition $I=J\dot\cup K$ with
$|J|=1,$ $|K|=2,$ both $x(J),$ $x(K)$ $g$-anisotropic and
$q$-minimal, and $q(x(J))\cong_\nu q(x(K))\cong_\nu q(x).$

\item[B)] $n=3.$ There are exactly two $2$-element subsets $J$ and
$K$ of $I$ with $x(J),$ $x(K)$ $g$-anisotropic and $q$-minimal and
$q(x(J))\cong_\nu q(x(K))\cong_\nu q(x).$

\item[C)] $n=3.$ For any $2$-element subset $J$ of $I$, the vector
$x(J)$ is $q$-minimal and $g$-anisotropic and $q(x(J))\cong_\nu
q(x).$ Thus the properties listed in B) hold for \textit{any} two
$2$-element subsets $J,K$ of $I.$

\item[D)] $n=4.$ There are exactly two $2$-element subsets $J$ and
$K$ of $I$ such that $x(J),$ $x(K)$ are $g$-anisotropic,
$q$-minimal and
$$q(x(J))\cong_\nu q(x(K))\cong_\nu q(x).$$
$J$ and $K$ are disjoint.
\end{enumerate}

In all four cases, we have $I=J\cup K,$ whence $x=x(J)\vee x(K)$
for the sets $J,K$ from above. Moreover, in Cases A and $D$,
\begin{equation}\label{eq:7.4}
b(x(J),x(K))<_\nu q(x). \end{equation} In Case B,
\begin{equation}\label{eq:7.5}
b(x(J),x(K))= q(x), \end{equation} whereas
\begin{equation}\label{eq:7.6}
b(x(J\sm K),x(K\sm J))\cong_\nu q(x). \end{equation}
In Case C, \eqref{eq:7.5} holds for any two different $2$-element
subsets $J,K$ of $I,$ and moreover
\begin{equation}\label{eq:7.7}
b(x(J\sm K),x(K\sm J))\cong_\nu q(x),\quad
b(x(J\sm K),x(K\sm J)\in \tT.
\end{equation}
\end{thm}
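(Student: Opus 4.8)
The plan is to prove Theorem~\ref{thm:7.2} by carefully combining the case analysis already carried out in the paragraphs labelled A)--D) above with the characterizations of $q$-minimality from \S\ref{sec:II.9}, especially \thmref{thm:II.9.7} and \corref{cor:II.9.8}. The structure of the argument is: (1) reduce to the situation where $q(x)\in\tG$ and all $\nu$-dominant terms in \eqref{eq:7.1} are tangible; (2) split into the four cases according to which kinds of terms (diagonal $\al_ix_i^2$ versus mixed $\bt_{ij}x_ix_j$) are $\nu$-dominant and how many there are; (3) in each case identify the partition or pair of $2$-element subsets $J,K$, verify their $q$-minimality and $g$-anisotropy using \corref{cor:II.9.8}, verify $q(x(J))\cong_\nu q(x(K))\cong_\nu q(x)$, and finally read off the $b$-value statements \eqref{eq:7.4}--\eqref{eq:7.7} from the explicit list of dominant terms.

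First I would establish the two preliminary reductions. Since $x$ is $q$-minimal and $x(i)\le x$, $x(i,j)\le x$, if the sum \eqref{eq:7.1} had a unique $\nu$-dominant term it would equal $q(x(i))$ or $q(x(i,j))$ for a proper subset, forcing $x=x(i)$ or $x=x(i,j)$, contradicting $\supp(x)=I$ with $n\ge 3$; hence there are at least two dominant terms and therefore $q(x)\in\tG$, which gives the assertion that $x$ is $g$-isotropic. Similarly, if any $\nu$-dominant term were ghost, then $q(x)$ equals the $q$-value of the corresponding $x(J)$ with $|J|\le 2<n$, again contradicting $q$-minimality; so all dominant terms are tangible. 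These are exactly the reductions stated in the text, so I would simply spell them out.

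Next I would run the four cases. In Case A ($n=3$, a dominant diagonal term $\al_ix_i^2$), $q$-minimality of $x$ forces exactly one further dominant term, which must be $\bt_{jk}x_jx_k$ with $(i,j,k)$ a permutation of $(1,2,3)$ — any other choice gives $q(x)=q(x(S))$ for $S\subsetneqq I$. Then $x=x(i)\vee x(j,k)$, $q(x(i))=\al_ix_i^2\in\tT$, and the expansion of $q(x(j,k))$ shows $\al_jx_j^2+\al_kx_k^2<_\nu\bt_{jk}x_jx_k$, so \thmref{thm:II.9.7} (case 4) gives that $x(j,k)$ is $q$-minimal, and \propref{prop:II.9.6}.a gives that $x(i)$ is $q$-minimal; uniqueness of the partition is clear since $i$ is the unique index carrying a dominant diagonal term. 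For Cases B and C (all dominant terms mixed), the relations \eqref{eq:7.2} resp. \eqref{eq:7.3} hold, $q(x)>_\nu\al_ix_i^2$ for all $i$, and \corref{cor:II.9.8} shows the relevant $x(i,j)$ are $g$-anisotropic and $q$-minimal with $q(x(i,j))\cong_\nu q(x)$; I would note that in Case B no third $2$-element subset works because its $q$-value is $\nu$-dominated. Case D ($n=4$) is handled identically: $q$-minimality forces exactly two dominant terms $\bt_{ij}x_ix_j$, $\bt_{k\ell}x_kx_\ell$ with $\{i,j\}\cap\{k,\ell\}=\emptyset$ (otherwise some $q(x(S))=q(x)$ with $S\subsetneqq I$), and \corref{cor:II.9.8} finishes it. In every case $I=J\cup K$, whence $x=x(J)\vee x(K)$ by \eqref{eq:I.6.15}/\eqref{eq:I.6.17}. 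The $b$-value claims \eqref{eq:7.4}--\eqref{eq:7.7} then follow by reading off from the known set of $\nu$-dominant terms: $b(x(J),x(K))$ is a sum of cross-terms $\bt_{ab}x_ax_b$ with $a\in J$, $b\in K$, and one checks which of those are among the dominant terms — in Cases A and D none are (only ``internal'' mixed terms dominate), giving the strict inequality \eqref{eq:7.4}, while in Cases B and C exactly the term $\bt_{ik}x_ix_k$ (respectively each cross-term) is dominant and tangible, giving \eqref{eq:7.5}--\eqref{eq:7.7}; the side remark $b(\veps_i,\veps_i)\le_\nu\al_i$ is used to rule out diagonal contributions.

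The main obstacle, I expect, is bookkeeping rather than mathematical depth: one must be scrupulous about \emph{which} subsets $S\subsetneqq I$ could spoil $q$-minimality in each configuration, and about separating the ``$x$ is $q$-minimal $\Rightarrow$ structure'' direction (where one exploits minimality to kill extra dominant terms) from the ``$x(J),x(K)$ are $q$-minimal'' direction (where one applies \thmref{thm:II.9.7}/\corref{cor:II.9.8} to the already-identified binary vectors). A secondary subtlety is the uniqueness assertions — that in Cases A and D the partition is \emph{the unique} one with the stated properties, and that Case C is genuinely the only case where all three $2$-subsets work — which require checking that any alternative partition would produce a $\nu$-strictly-smaller $q$-value on one of its blocks; this is where the strict inequalities in \eqref{eq:7.2} and in the $n=4$ analysis are essential. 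None of this is deep, but it must be done cleanly to avoid gaps.
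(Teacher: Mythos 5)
Your proposal is correct and follows essentially the same route as the paper: the paper's own justification of Theorem~\ref{thm:7.2} is precisely the case analysis A)--D) preceding it (reduction to at least two tangible $\nu$-dominant terms, the split by diagonal versus mixed dominant terms, and the appeal to Proposition~\ref{prop:II.9.6}.a, Theorem~\ref{thm:II.9.7} and Corollary~\ref{cor:II.9.8}, with the $b$-statements read off from the dominant terms using $b(\veps_i,\veps_i)\le_\nu\al_i$), which is exactly what you reconstruct, plus a welcome explicit check of the uniqueness/exclusivity claims.
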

\pSkip
As before we assume that $V$ is free with base
$(\veps_i \ds |i\in I),$ $I=\{1,\dots,n\}$, $n=3$ or~4. Given two
$g$-anisotropic $q$-minimal vectors $y,z\in  V$ of small support,
we now ask for conditions under which the vector $x:=y\vee z$ is
$q$-minimal and has full support $I.$ In view of \thmref{thm:7.2},
we will be content to assume from the beginning that
\begin{equation}\label{eq:7.6.b}
b(y,z)\le_\nu q(y)\cong_\nu q(z).
\end{equation}

A satisfactory converse to \thmref{thm:7.2} in the cases A) and B)
runs as follows.

\begin{thm}\label{thm:7.3}
Assume that $y,z\in V$ are $g$-anisotropic and $q$-minimal,
and furthermore  that $y\vee z$ has full support $I$, and
\begin{equation}\label{eq:7.7.b}
b(y,z)<_\nu q(y)\cong_\nu q(z).
\end{equation}
Assume finally that $n=3,$ $|\supp (y)|=1,$ $|\supp (z)|=2,$ or $n=4,$ and
$|\supp (y)|=|\supp (z)|=2.$ Then $x:=y\vee z$ is $q$-minimal.
\end{thm}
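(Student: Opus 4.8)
The goal is to show that $x := y \vee z$ is $q$-minimal under the stated hypotheses, so I must rule out the existence of $x' < x$ in $V$ with $q(x') = q(x)$. The natural strategy is to combine what we already know about the ``small support'' minimal vectors $y$ and $z$ (Theorem \ref{thm:II.9.7} and Corollary \ref{cor:II.9.8}, which tell us that since $y, z$ are $g$-anisotropic and $q$-minimal, all the relevant scalars and the off-diagonal values $b(\veps_i, \veps_j)$ occurring in them are tangible, and that $y, z$ are in the ``excessive'' regime) with the support combinatorics from \S\ref{sec:I.6}: $\supp(x) = \supp(y) \cup \supp(z) = I$, and any $x' \le x$ has $\supp(x') \subseteq I$. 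First I would record that, writing $x = \sum_{i\in I} x_i \veps_i$ with all $x_i \ne 0$, we may (after reindexing) assume $\supp(y)$ and $\supp(z)$ are as prescribed; in the case $n=3$, $\supp(y) = \{1\}$ (say) and $\supp(z) = \{2,3\}$, and $x(1) = y$, $x(2,3) = z$; in the case $n=4$, $\supp(y) = \{1,2\}$, $\supp(z) = \{3,4\}$, and $x(1,2) = y$, $x(3,4) = z$. The hypothesis \eqref{eq:7.7.b} says $b(y,z) <_\nu q(y) \cong_\nu q(z)$.

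The core computation is to expand $q(x)$ via \eqref{eq:7.1} and identify the $\nu$-dominant terms. Since $q(x) = q(y) + q(z) + b(y,z)$ and $b(y,z) <_\nu q(y) \cong_\nu q(z)$, the term $b(y,z)$ is negligible, so $q(x) \cong_\nu q(y) \cong_\nu q(z)$ and in fact $q(x) = e\,q(y) = q(y) + q(z) \in \tG$; thus $x$ is $g$-isotropic, consistent with Theorem \ref{thm:7.2}. Now I would argue that the $\nu$-dominant terms in the full expansion \eqref{eq:7.1} of $q(x)$ are \emph{exactly} those coming from $q(y)$ together with those coming from $q(z)$ — the ``cross'' terms that make up $b(y,z)$ are all strictly $\nu$-smaller. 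Moreover, since $y$ and $z$ are $g$-anisotropic and $q$-minimal, within $q(y)$ there is a \emph{unique} $\nu$-dominant term (namely $\al_1 x_1^2$ when $n=3$, $|\supp(y)|=1$; or $\bt_{12} x_1 x_2$ when $n=4$ — because a $g$-anisotropic $q$-minimal vector of support $2$ is in case 2), 3) or 4) of Theorem \ref{thm:II.9.7}, hence has a single dominant term of the stated shape) and it is tangible; similarly $q(z)$ has a single tangible $\nu$-dominant term. So $q(x)$ has exactly two $\nu$-dominant terms, both tangible, lying in disjoint ``halves'' $J := \supp(y)$ and $K := \supp(z)$ of $I$.

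With this in hand the minimality argument is the same pattern used throughout \S\ref{sec:7} and in Theorem \ref{thm:II.9.7}: suppose $x' < x$, write $x' = \sum x_i' \veps_i$ with $x_i' \le x_i$ for all $i$ and $x_j' < x_j$ for some $j$. Since the two dominant terms live in the disjoint sets $J$ and $K$, and $j$ lies in one of them — say $j \in K$ — I claim $q(x') <_\nu q(x)$, hence $q(x') < q(x)$. Indeed, the $J$-part contributes at most $q(x(J)) = q(y)$; for the $K$-part, I use that $x(K) = z$ is $q$-minimal and $g$-anisotropic so its unique dominant term is tangible and of the form $\kappa\, x_a x_b$ (or $\al_a x_a^2$) with all scalars/coordinates involved tangible, hence shrinking $x_j$ strictly shrinks it $\nu$-strictly — this is exactly where $q$-minimality of $z$ is used, and it forces $q(x'(K)) <_\nu q(z)$. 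Combined with $q(x'(J)) \le_\nu q(y) \cong_\nu q(z)$ and the fact that all cross terms $b_{rs} x_r' x_s'$ with $r \in J$, $s \in K$ are $\le_\nu b(y,z) <_\nu q(y)$, every term in the expansion of $q(x')$ is $\le_\nu q(x)$ and the two former dominant terms are now not both present at full strength — so $q(x') <_\nu q(x)$. The symmetric case $j \in J$ is identical with the roles of $y,z$ swapped (in the $n=3$ case, $J = \{1\}$, so $x_1' < x_1$ gives $q(x'(J)) <_\nu q(y)$ directly, using $\al_1, x_1 \in \tT$). I expect the main obstacle to be the bookkeeping verifying that \emph{no} term of $q(x')$ can climb back up to $\nu$-size $q(x)$ — in particular checking the cross terms and making sure the ``uniqueness of the dominant term in $q(z)$, and its tangibility'' really is delivered by $g$-anisotropy plus $q$-minimality of $z$ via Theorem \ref{thm:II.9.7}; everything else is the routine $\nu$-dominance arithmetic already rehearsed in the preceding sections.
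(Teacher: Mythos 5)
Your overall set-up agrees with the paper's proof: reduce to $\supp(y)$ and $\supp(z)$ disjoint with union $I$, use Proposition \ref{prop:II.9.6}.a and Corollary \ref{cor:II.9.8} to see that the $\nu$-dominant terms of $q(x)$ are exactly the single tangible dominant terms of $q(y)$ and of $q(z)$ (so $\al_1x_1^2$ resp.\ $\bt_{23}x_2x_3$ when $n=3$, and $\bt_{12}x_1x_2$, $\bt_{34}x_3x_4$ when $n=4$), use \eqref{eq:7.7.b} to make all cross terms strictly $\nu$-smaller, and then compare $q(x')$ with $q(x)$ for $x'<x$ differing from $x$ in one coordinate. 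But the decisive step of your argument is false as stated. If $x'$ is obtained from $x$ by strictly decreasing one coordinate lying, say, in $K=\supp(z)$, then the dominant term coming from the other half $J=\supp(y)$ survives in $q(x')$ at full strength, so $q(x')\nucong q(y)\nucong q(x)$; the inequality $q(x')\nul q(x)$ that you claim never holds in any of the relevant subcases (the same happens in the $n=3$ case with $x_1'<x_1$: then $q(x')=\bt_{23}x_2x_3\nucong q(x)$). With $\nu$-values alone you therefore cannot separate $q(x')$ from $q(x)$, and the proof does not close.

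The missing idea --- and the paper's actual mechanism --- is the tangible-versus-ghost comparison at equal $\nu$-value. After decreasing one coordinate, $q(x')$ has a \emph{unique} $\nu$-dominant term, which is tangible (here one needs that the coordinates of $x$ and the relevant $\al_i$, $\bt_{ij}$ are tangible, as supplied by Proposition \ref{prop:II.9.6}.a and Corollary \ref{cor:II.9.8}, and that by \eqref{eq:7.7.b} no cross term can climb up to the dominant level); hence $q(x')\in\tT$, while $q(x)=eq(y)\in\tG$. Since $q(x')\nucong q(x)$ and $q(x)$ is ghost, Proposition \ref{prop:I.6.3} (see \eqref{eq:I.6.10}) gives $q(x')<q(x)$ in the minimal ordering, which is what $q$-minimality of $x$ requires. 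So your conclusion is reachable, but only via this step, which your proposal never invokes --- indeed your minimality argument nowhere uses that $q(x)$ is ghost, although that is the crux. A small further inaccuracy: a $g$-anisotropic $q$-minimal vector of support $2$ falls only under case 4) of Theorem \ref{thm:II.9.7} (this is exactly Corollary \ref{cor:II.9.8}); in cases 1)--3) the value is ghost. This does not affect the structure of your argument, since the conclusion you draw from it (a single tangible dominant term of the form $\bt_{ij}x_ix_j$) is precisely what Corollary \ref{cor:II.9.8} provides.
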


\begin{proof}
We have $\supp (y)\cup\supp (z)=I,$ which forces $\supp(y)\cap\supp
(z)=\emptyset.$
\pSkip

a) Assume first that $n=3.$ After a permutation of the
$\veps_i,$ we may assume
$$y=y_1\veps_1,\qquad z=z_2\veps_2+z_3\veps_3,$$
and then have $x=\sum\limits_1^3x_i\veps_i$ with
$$x_1=y_1,\qquad x_2=z_2,\qquad x_3=z_3.$$
It follows from \propref{prop:II.9.6}.a  and
\corref{cor:II.9.8} that $\al_1x_1^2=q(y)\in\tT$ and
\begin{equation}\label{eq:7.8}
\al_2x_2^2+\al_3x_3^2<_\nu\bt_{23}x_2x_3=q(z)\in\tT.\end{equation} Thus $x_1,x_2,x_3,\al_1,\bt_{23}$ are all
tangible. Further by assumption \eqref{eq:7.7.b}
\begin{equation}\label{eq:7.9}
\bt_{11}x_1^2+\bt_{12}x_1x_2+\bt_{13}x_1x_3<_\nu\al_1x_1^2\cong_\nu\bt_{23}x_2x_3.
\end{equation}
Here $\bt_{11}=b(\veps_1,\veps_1)\le_\nu\al$ (cf.
\cite[Ineq. (1.9)]{QF1}). It follows that
$$q(x)=\al_1x_1^2+\bt_{23}x_2x_3=eq(y)=eq(z).$$
Given $x'=\sum\limits_1^3x_i'\veps_i< x,$ we want to prove
that $q(x')<q(x).$ It suffices to consider the case $x_1'<x_1,$
$x_2'=x_2,$ $x_3'=x_3$ and $x_1'=x_1,$ $x_2'<x_2,$ $x_3'=x_3.$
Notice that $x_i'<x_1$ implies $x_i'<_\nu x_i$ since all $x_i$ are
tangible.

In the first case $\bt_{23}x_2'x_3'=\bt_{23}x_2x_3,$ and we
learn from \eqref{eq:7.8} and \eqref{eq:7.9} that in the sum
$$\sum_1^3\al_ix_i'{}^{2}+\sum_{i<j}\bt_{ij}x_i'x_j'=q(x')$$
there is only one $\nu$-dominant term $\bt_{23}x_2x_3,$ which is
tangible. Thus
$$q(x')=\bt_{23}x_2x_3\in \tT,\quad\text{and}\quad
q(x')\cong_\nu q(x).$$ Since $q(x)$ is ghost, this implies
$q(x') <  q(x).$ In the second case where $x_2'<x_2,$ we can
argue in the same way, now obtaining
$q(x')=\al_1x_2^2\in\tT$ and then $q(x')<q(x).$ Thus $x$
is indeed $q$-minimal.
\pSkip

b) Now let $n=4.$ We may assume that $\supp (y)=\{1,2\}$  and $\supp
(z)=\{3,4\},$ whence
$$y=y_1\veps_1+y_2\veps_2,\qquad
z=z_3\veps_3+z_4\veps_4,$$ and
$x=\sum\limits_1^4x_i\veps_i$ with
$$x_1=y_1,\qquad x_2=y_1,\qquad x_3=z_3,\qquad x_4=z_4.$$
Trivially $y=x(1,2),$ $z=x(3,4).$ We infer from \corref{cor:II.9.8} 
that \begin{gather}
\al_1x_1^2+\al_2x_2^2<_\nu\bt_{12}x_1x_2=q(y)\in \tT,\\
\al_3x_3^2+\al_4x_4^2<_\nu\bt_{34}x_3x_4=q(z)\in \tT,
\end{gather}
and further from Condition \eqref{eq:7.7} that
$$\bt_{13}x_1x_3+\bt_{14}x_1x_4+\bt_{23}x_2x_3+\bt_{24}x_2x_4<_\nu
q(y)\cong_\nu q(z).$$ Let $x'<x,$ and assume w.l.o.g. that exactly
one coordinate $x_i'<x_i,$ say $x_1'<x_1,$ which implies
$x_1'<_\nu x_1.$ If $q(x')=q(x)$ would hold, then
\begin{align*}
q(x')&=\bt_{12}x_1'x_2+\bt_{34}x_3x_4 = \bt_{34}x_3x_4.
\end{align*}
But $q(x')$ is tangible, while $q(x)$ is ghost. This contraction
proves that $q(x')<q(x)$, and we conclude that $x$ is $q$-minimal.
\end{proof}

If $n=3$ and $|\supp (y)|=|\supp (z)|=z,$ then a crude converse to
\thmref{thm:7.2}, analogous to \thmref{thm:7.3} with only
condition \eqref{eq:7.7.b} replaced by \eqref{eq:7.6.b}, does not
hold, as the following example shows.

\begin{examp}\label{examp:7.4}
Let $y=y_1\veps_1+y_2\veps_2$ and
$z=z_1\veps_1 +z_3\veps_3$ with
$y_1,y_2,z_1,z_3\in\tT$ and $ey_1=ez_1,$ $ey_2=ez_3,$ but
$y_1\ne z_1.$ Then
$$x:=y\vee z=x_1\veps_1+x_2\veps_2+x_3\veps_3$$
with
$$x_1=ey_1,\qquad x_2=y_2,\qquad x_3=z_3.$$
Assume further that \begin{enumerate} \dispace

\item[1)]
$\bt_{12},\bt_{13}\in\tT,$

\item[2)]
$\al_1y_1^2+\al_2y_2^2<_\nu\bt_{12}y_1y_2\in\tT,$

\item[3)] $\al_1z_1^2+\al_3z_3^2<_\nu\bt_{13}z_1 z_3.$
\end{enumerate}
Both $y$ and $z$ are $q$-minimal and $g$-anisotropic by
\corref{cor:II.9.8}
, and
$$q(y)=\bt_{12}y_1y_2\cong_\nu\bt_{13}z_1z_3=q(z).$$
Since $\bt_{11}:=b(\veps_1,\veps_1)\le_\nu\al_1$
and $ey_1=ez_1,$ we have
$$b(y_1\veps_1,z_1\veps_1)\le_\nu
\al_1y_1^2\cong_\nu\al_1z_1^2$$ and conclude that
$$b(y,z)=\bt_{11}y_1z_1+\bt_{12}z_1y_2+\bt_{13}y_1z_3=eq(y)=eq(z).$$
Thus Condition \eqref{eq:7.6.b} is valid. We have $x=y+z,$ whence
$$q(x)=q(y)+q(z)+b(y,z)=eq(y).$$
Let now $x':=y_1\veps_1+y_2\veps_2+z_3\veps_3.$
Then $x'<x,$ but
$$q(x')\ge \bt_{12}y_1y_2+\bt_{13}y_1z_3=eq(y).$$
Thus $q(x')=q(x).$ This proves that $x$ is \textit{not}
$q$-minimal.
\end{examp}

The vector $x=y\vee z$ in \thmref{thm:7.3} obviously satisfies
$y=x(J), $ $z=x(K)$ with $J:=\supp (y),$ $K:=\supp (z),$ while for the
vector $y\vee z$ in Example \ref{examp:7.4} this does not hold. If
we insist on the property $y=x(J),$ $z=x(K),$ then we obtain a
converse of \thmref{thm:7.2} also in the cases B) and D) as
follows.

\begin{thm}\label{thm:7.5}
Let $n=3.$ Assume that $y,z\in V$ are $g$-anisotropic and
$q$-minimal with respective support $J,K$ such that $|J| = 2,$ $|K|=2,$ $J\cup
K=I,$ whence $J\cap K$ is a singleton. Assume that $y(J\cap
K)=z(J\cap K)$ and furthermore  that either
\begin{equation}\label{eq:7.10}
b(y(J\setminus K),z(K\setminus J))<_\nu q(y)\cong_\nu q(z);
\end{equation}
or
\begin{equation}\label{eq:7.11}
b(y(J\setminus K),z(K\setminus J))\in \tT,\quad
b(y(J\setminus K),z(K\setminus J))\cong_\nu q(y)\cong_\nu q(z).
\end{equation}
Then $x:=y\vee z$ is $q$-minimal and, of course, $x(J)=y,$
$x(K)=z.$
\end{thm}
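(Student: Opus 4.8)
The plan is to reduce to an explicit coordinate computation, in the style of the proofs of \thmref{thm:7.3} and \thmref{thm:II.9.7}. Since $J\cap K$ is a singleton, $J\cup K=I$, and all hypotheses are stable under permuting the $\veps_i$, I would assume $I=\{1,2,3\}$, $J=\{1,2\}$, $K=\{1,3\}$, so that $J\setminus K=\{2\}$ and $K\setminus J=\{3\}$. Write $y=y_1\veps_1+y_2\veps_2$ and $z=z_1\veps_1+z_3\veps_3$; the hypothesis $y(J\cap K)=z(J\cap K)$ says $y_1=z_1$, so putting $x_1:=y_1=z_1$, $x_2:=y_2$, $x_3:=z_3$ one has $x=y\vee z=x_1\veps_1+x_2\veps_2+x_3\veps_3$, of full support $I$, with $x(J)=y$ and $x(K)=z$ (the last assertion of the theorem). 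Applying \corref{cor:II.9.8} to $y$ and to $z$ (on the binary submodules $R\veps_1+R\veps_2$, $R\veps_1+R\veps_3$), the elements $x_1,x_2,x_3,\bt_{12},\bt_{13}$ are all tangible and
\[
\al_1x_1^2+\al_2x_2^2\nul\bt_{12}x_1x_2=q(y),\qquad \al_1x_1^2+\al_3x_3^2\nul\bt_{13}x_1x_3=q(z);
\]
moreover $q(y)\nucong q(z)$ is part of each of \eqref{eq:7.10}, \eqref{eq:7.11}. Since $b(y(J\setminus K),z(K\setminus J))=\bt_{23}x_2x_3$ (with $\bt_{23}:=b(\veps_2,\veps_3)$), hypothesis \eqref{eq:7.10} becomes $\bt_{23}x_2x_3\nul q(y)$, while \eqref{eq:7.11} becomes $\bt_{23}x_2x_3\nucong q(y)$ together with $\bt_{23}x_2x_3$ (hence $\bt_{23}$) tangible.

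\textbf{The value $q(x)$.} Next I would expand $q(x)=\sum_i\al_ix_i^2+\sum_{i<j}\bt_{ij}x_ix_j$ as in \eqref{eq:7.1}. The displayed inequalities make the three terms $\al_ix_i^2$ $\nu$-strictly smaller than $q(y)\nucong q(z)$, whereas $\bt_{12}x_1x_2\nucong\bt_{13}x_1x_3\nucong q(y)$ are $\nu$-dominant and tangible; the term $\bt_{23}x_2x_3$ is either $\nu$-smaller (case \eqref{eq:7.10}) or a third tangible $\nu$-dominant term (case \eqref{eq:7.11}). Either way there are at least two tangible $\nu$-dominant terms, so $q(x)=e\,q(y)=e\,q(z)\in\tG$; in particular $q(x)\nucong q(y)$ and $q(x)$ is a ghost (consistent with \thmref{thm:7.2}).

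\textbf{$q$-minimality.} Finally, to see that $x$ is $q$-minimal, I would use the reduction from the proof of \thmref{thm:7.3}: given $x'<x$, pick a coordinate $j$ with $x_j'<x_j$ and replace $x'$ by the vector $x''$ that agrees with $x$ except in its $j$-th coordinate, which is $x_j'$; then $x'\le x''<x$, so $q(x')\le q(x'')$ by monotonicity and it suffices to prove $q(x'')<q(x)$, where now $x_j''=x_j'<x_j$ forces $x_j''\nul x_j$ by \eqref{eq:I.6.11} (as $x_j\in\tT$). For $j=2$: lowering $x_2$ $\nu$-strictly decreases $\al_2x_2^2$, $\bt_{12}x_1x_2$ and $\bt_{23}x_2x_3$, leaving $\bt_{13}x_1x_3\nucong q(x)$ as the unique $\nu$-dominant term of $q(x'')$; hence $q(x'')=\bt_{13}x_1x_3\in\tT$ while $q(x'')\nucong q(x)\in\tG$, so $q(x'')<q(x)$ by \eqref{eq:I.6.10}. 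The case $j=3$ is symmetric, with $\bt_{12}x_1x_2$ the surviving tangible dominant term. For $j=1$: lowering $x_1$ $\nu$-strictly decreases $\al_1x_1^2$, $\bt_{12}x_1x_2$ and $\bt_{13}x_1x_3$, while $\al_2x_2^2$, $\al_3x_3^2$, $\bt_{23}x_2x_3$ are unchanged; in case \eqref{eq:7.10} every term of $q(x'')$ is then $\nu$-strictly below $q(x)$, so $q(x'')\nul q(x)$ and a fortiori $q(x'')<q(x)$, whereas in case \eqref{eq:7.11} the unchanged tangible term $\bt_{23}x_2x_3\nucong q(x)$ survives as the unique $\nu$-dominant one, so $q(x'')=\bt_{23}x_2x_3\in\tT$, $q(x'')\nucong q(x)\in\tG$, and again $q(x'')<q(x)$. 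This exhausts all $x'<x$, proving $x$ $q$-minimal.

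\textbf{Main obstacle.} Conceptually nothing here is hard; the only real work is the bookkeeping in the last step — tracking which of the (up to) six terms of $q(x'')$ remain $\nu$-dominant once a single coordinate is lowered, and noticing that the surviving dominant term is always tangible while $q(x)$ is a ghost, which is exactly what yields $q(x'')<q(x)$ via \eqref{eq:I.6.10}. The direction $j=1$ (lowering the coordinate shared by $J$ and $K$) is the one where the dichotomy \eqref{eq:7.10} versus \eqref{eq:7.11} is genuinely used; the directions $j=2,3$ go through uniformly.
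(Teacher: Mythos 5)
Your proposal is correct and follows essentially the same route as the paper: the same normalization $J=\{1,2\}$, $K=\{1,3\}$, the same input from \corref{cor:II.9.8} giving $q(y)=\bt_{12}x_1x_2$, $q(z)=\bt_{13}x_1x_3$ tangible and dominant, and the same case analysis on which single coordinate is lowered, with the dichotomy \eqref{eq:7.10}/\eqref{eq:7.11} used exactly where the paper uses it (the shared coordinate $x_1$) and the tangible-versus-ghost comparison finishing each case. The only difference is cosmetic: you spell out the reduction to lowering one coordinate and the symmetric case $j=3$, which the paper leaves implicit.
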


\begin{proof} We may assume that $J=\{1,2\},$ $K=\{1,3\},$
 and then
have
$$y=y_1\veps_1+y_2\veps_2,\qquad
z=z_1\veps_1+z_3\veps_3$$ with $y_1=z_1.$ Then
$x=\sum\limits_1^3 x_i\veps_i$ with
$$x_1=y_1 = z_1,\qquad x_2=y_2,\qquad x_3=z_3.$$
It follows from \corref{cor:II.9.8}
 that
\begin{enumerate} \dispace
\item[(1)]
$\al_1x_1^2+\al_2x_2^2<_\nu\bt_{12}x_1x_2=q(y)\in\tT,$

\item[(2)] $\al_1x_1^2+\al_3x_3^2<_\nu
\bt_{13}x_1x_3=q(z)\in\tT.$
\end{enumerate}
Assume that $x'=\sum\limits_1^3x_i'\veps_i$ is given with
either
\begin{align*}
&x_1'<x_1,\quad x_2'=x_2,\quad x_3'=x_3\quad\text{or}\\
&x_1'=x_1,\quad x_2'<x_2,\quad x_3'=x_3.
\end{align*}
We will prove that $q(x')<q(x),$ and then will be done.

Taking into account that
$$b(y(J\setminus K),z(K\setminus
J))=b(y_2\veps_2,z_3\veps_3)=\bt_{23}x_2x_3,$$ we
see that
\begin{enumerate}
\item[(3)] $\bt_{23}x_2x_3<_\nu \bt_{12}x_1x_2\cong_\nu
\bt_{13}x_1x_3,$
\end{enumerate}
while \eqref{eq:7.11} says that
\begin{enumerate}
\item[(4)] $\bt_{23}x_2x_3\in\tT,\quad
\bt_{23}x_2x_3\cong_\nu
\bt_{12}x_1x_2\cong_\nu\bt_{13}x_1x_3.$
\end{enumerate}

Assume that (3) holds. If $x_1'<x_1,$ then $x_1'<_\nu x_1,$ and
thus
$$\bt_{12}x_1'x_2<_\nu \bt_{12}x_1x_2,\quad
\bt_{13}x_1'x_3<_\nu\bt_{13}x_1x_3.$$ It follows from (1),
(2), (3) that $q(x')<_\nu q(x),$ whence $q(x')<q(x).$ If
$x_2'<x_2,$ then $x_2'<_\nu x_2,$ and thus
$$\bt_{12}x_1x_2<_\nu \bt_{12}x_1x_2,\qquad
\bt_{23}x_2'x_3<_\nu\bt_{23}x_2x_3.$$ Now we conclude from
(1), (2), (3) that
$$q(x')=\bt_{13}x_1x_3\cong_\nu q(x).$$
But $q(x')\in\tT,$ $q(x)\in\tG,$ and so
$q(x')<q(x)$ again.

Assume finally that (4) holds. If $x_1'<x_1$, we see by the same
reasoning that $$q(x')=\bt_{23}x_2x_3\cong_\nu q(x),$$ while if
$x_2'<x_2$ then
$$q(x')=\bt_{13}x_1x_3\cong_\nu q(x).$$
In both cases $q(x')\in\tT,$ $q(x)\in\tG,$ and so
$q(x')<q(x).$ This completes the proof that $x$ is $q$-minimal.
\end{proof}

We complement Theorems \ref{thm:7.2}, \ref{thm:7.3}, \ref{thm:7.5} by an observation on certain
pairs of $q$-minimal vectors.

\begin{thm}\label{thm:7.6}
Assume that $x,y\in V$ are $q$-minimal vectors with $y<x$ and $q(y)\cong_\nu q(x).$ Let $ J :=\supp (y).$ Then $q(y)\in \tT,$ $q(x)\in\tG,$ and one of the following cases holds:
\begin{enumerate}\dispace
\item[1)] $|\supp (y)|=|\supp (x)|=1,$ $x=ey.$
\item[2)] $|\supp (y)|=|\supp (x)|=2,$ $y<x<ey.$
\item[3)] $|\supp (y)|=1,$ $|\supp (x)|\ge 2,$ $y=x( J ).$
\item[4)] $|\supp (y)|=2,$ $|\supp (x)|\ge 3,$ $y=x( J ).$
\end{enumerate}
\end{thm}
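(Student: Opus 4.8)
The plan is to analyze the two $q$-minimal vectors $y < x$ simultaneously via their supports and the $\nu$-dominant term structure of $q$, relying on Proposition~\ref{prop:II.9.4}, Proposition~\ref{prop:II.9.6}, Theorem~\ref{thm:II.9.7} and Theorem~\ref{thm:7.2}. First I would establish the blanket conclusions $q(y)\in\tT$, $q(x)\in\tG$: since $y$ is $q$-minimal and $y<x$ with $q(y)\cong_\nu q(x)$, if $q(y)$ were ghost, then picking a tangible $y'$ with $ey'=y$ (componentwise, using $\tG=e\tT$) would give $y'<y$ with $q(y')\cong_\nu q(y)=q(y)$, and since $q(y)$ is ghost this forces $q(y')<q(y)$ only if the value genuinely drops; a cleaner route is to observe that every $q$-minimal vector of support size $\ge 2$ is $g$-isotropic (Proposition~\ref{prop:II.9.6}.b, Corollary~\ref{cor:II.9.8}, Theorem~\ref{thm:7.2}), whereas a $q$-minimal vector of support size $1$ with ghost $q$-value would itself be non-minimal by Proposition~\ref{prop:II.9.6}.a unless its scalar is tangible, in which case its $q$-value is in $\tT\cdot\tG$... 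I must be careful here. The clean statement I will use: $x$ is $q$-minimal with $y<x$, so $q(y)<q(x)$ is impossible only if... — rather, $y<x$ gives $q(y)\le q(x)$, and $q(y)\cong_\nu q(x)$ together with $q(y)\neq q(x)$ (as $y\ne x$ need not force $q(y)\ne q(x)$, but here $q$-minimality of $x$ gives exactly $q(y)<q(x)$ since $y<x$!). Thus $q(y)<q(x)$ with $q(y)\cong_\nu q(x)$, which by Proposition~\ref{prop:I.6.3} forces $q(y)\in\tT$ and $q(x)\in\tG$ (the only way a strictly smaller element can be $\nu$-equivalent is tangible $<$ ghost). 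That is the key opening move.

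Next I would split on $|\supp(y)|$. If $|\supp(y)|=1$, write $y=y_k\veps_k$; then $q(y)=y_k^2\al_k\in\tT$ forces $y_k,\al_k\in\tT$. Now compare with $x$: either $\supp(x)=\{k\}$, in which case $x=\lambda\veps_k$ with $\lambda^2\al_k\cong_\nu y_k^2\al_k$ and $\lambda > y_k$ in the minimal ordering, forcing (by Proposition~\ref{prop:I.6.3}, since $y_k$ is tangible) $e y_k = \lambda$, hence $x=ey$ — case 1); or $|\supp(x)|\ge2$, and then I claim $y=x(\{k\})$: indeed $x(\{k\})\le x$ and $q(x(\{k\}))=x_k^2\al_k$; since $q(x)\in\tG$ is reached by a sum of the terms in $(\ast)$ and $x(\{k\})\le x$ gives $q(x(\{k\}))\le q(x)$, minimality of $x$ and $q(y)=y_k^2\al_k\cong_\nu q(x)$ force $y_k^2\al_k\cong_\nu x_k^2\al_k$; if $y_k\ne x_k$ we would get $y_k<x_k$ tangible hence $y_k<_\nu x_k$ and $q(y)<_\nu q(x)$, contradicting $\nu$-equivalence — so $y_k=x_k$ and $y=x(\{k\})$, case 3).

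If $|\supp(y)|=2$, then $y$ is $q$-minimal of support $2$ with $q(y)\in\tT$; Theorem~\ref{thm:II.9.7} cases 2)–4) (the $g$-anisotropic ones) apply, and again $\supp(y)=:J$ with $y(J)=y$. If $\supp(x)=J$ as well, then $y<x$ and $q(y)\cong_\nu q(x)$ with both supports equal to $J$; since each coordinate $y_i\le x_i$ and strict somewhere, and the $\nu$-value of $q$ is governed by the $\nu$-dominant term (Theorem~\ref{thm:II.9.7}), I would check that the only way to keep $q(x)\cong_\nu q(y)$ while enlarging is to pass from tangible coordinates to their ghosts, giving $y<x\le ey$ and in fact $x<ey$ is needed if $x\ne ey$... but $x=ey$ has support $J$ too, so case 2) should read $y<x\le ey$; the stated form $y<x<ey$ presumably uses that $x=ey$ has $q(ey)=eq(y)$ which is still $\cong_\nu q(x)$, so I'd need to double-check whether $x=ey$ is excluded or absorbed — I will verify that if $x=ey$ then $|\supp(x)|=2$ still and it falls under case 2) with the non-strict inequality, or adjust. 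If $|\supp(x)|\ge3$, the argument mirrors the $|\supp(y)|=1$ case: $x(J)\le x$, $q(x(J))\le q(x)$, and $\nu$-equivalence $q(y)\cong_\nu q(x)$ plus minimality of $x$ forces $q(x(J))\cong_\nu q(x)\cong_\nu q(y)$, then coordinatewise tangency forces $y=x(J)$, case 4); here I also need $|\supp(x)|\le4$ from Proposition~\ref{prop:II.9.4}, which is automatic.

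The main obstacle I anticipate is the bookkeeping in the $|\supp(y)|=|\supp(x)|=2$ case (case 2): I must rule out $x$ having a coordinate strictly between $y_i$ and $ey_i$ in a way that changes which term of $(\ast)$ is $\nu$-dominant, and pin down the exact inequality ($y<x<ey$ versus $y<x\le ey$), which requires a careful case analysis using Theorem~\ref{thm:II.9.7}'s classification of when a support-$2$ vector is $q$-minimal together with the fact that $q(x)$ is ghost while $q(y)$ is tangible. Everything else is a routine application of Proposition~\ref{prop:I.6.3} (tangible elements are incomparable only to $\nu$-equivalent tangibles, and strictly below only $\nu$-strictly-below) combined with the dominant-term tables.
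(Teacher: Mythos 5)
Your overall route is the same as the paper's: deduce $q(y)<q(x)$ from the $q$-minimality of $x$, hence $q(y)\in\tT$, $q(x)\in\tG$ by Proposition~\ref{prop:I.6.3}, then split on $|\supp(y)|$ and compare $\nu$-dominant terms; the opening move and the case $\supp(x)=\supp(y)=\{k\}$ are correct. But the step by which you obtain $y=x(J)$ in cases 3) and 4) fails as written: from $y_k<x_k$ with $y_k$ tangible you cannot conclude $y_k<_\nu x_k$, because by \eqref{eq:I.6.10} the alternative $x_k=ey_k$ (so $y_k\cong_\nu x_k$) is equally possible --- your closing summary of Proposition~\ref{prop:I.6.3} (``strictly below only $\nu$-strictly-below'') omits exactly the relation $a<ea$ that you yourself used correctly in case 1). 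Excluding $x_k=ey_k$ requires invoking the $q$-minimality of $x$ a second time, as the paper does: if the $\nu$-dominant term $\al_k x_k^2$ (case 3), resp.\ $\bt_{12}x_1x_2$ (case 4), were ghost, then $q(x)=q(x(J))$ while $x(J)<x$ because $|\supp(x)|>|J|$, contradicting minimality of $x$; only after this are the coordinates of $x$ over $J$ tangible, and only then does $y_i\le x_i$, $y_i\cong_\nu x_i$ force $y_i=x_i$. Without this extra use of minimality your ``coordinatewise tangency'' conclusion is unsupported.

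Case 2) is also left unresolved in your proposal. You correctly suspect that each $x_i\in\{y_i,ey_i\}$, but you leave open whether $x=ey$ is excluded and even contemplate weakening the conclusion to $y<x\le ey$. It must be excluded, again by minimality: $ey$ has both coordinates ghost, so it is not $q$-minimal (by Corollary~\ref{cor:II.9.8}, or directly because $y_1\veps_1+ey_2\veps_2<ey$ has the same value $e\bt_{12}y_1y_2$); since $x$ is $q$-minimal, $x\ne ey$, and since $ex=ey$ with $y<x\le ey$, exactly one coordinate of $x$ is ghosted, giving $y<x<ey$ as stated. Note also that for $q(y)\in\tT$ with $|\supp(y)|=2$ only the situation of Corollary~\ref{cor:II.9.8} can occur (case 4 of Theorem~\ref{thm:II.9.7} with $\bt,y_1,y_2$ tangible); your appeal to ``cases 2)--4)'' of Theorem~\ref{thm:II.9.7} as the $g$-anisotropic ones is inaccurate, since cases 2) and 3) produce ghost values.
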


\begin{proof} a) We may assume that $\supp (x)=\{1,\dots,n\}.$ We have $q(y)<q(x)$ because $x$ is $q$-minimal. This forces $q(y)\in \tT,$ $q(x)\in\tG.$ \pSkip

b) Assume $n=1.$ Now $y=y_1\veps_1,$ $x=x_1\veps_1,$ and $\al_1^2y_1\in \tT ,$ $e\al_1^2y_1=\al_1^2y_1\in\tT,$ $e\al_1^2y_1=\al_1^2x_1.$ This implies $x_1=ey_1,$ whence $x=ey.$
\pSkip

c) Suppose that $| J |=1,$ $n\ge 2.$ We may assume that $J=\{1\}.$ Now $y=y_1\veps_1,$ $\al_1y_1^2\in\tT$ and $y_1\le x_1,$ whence $\al_1y_1^2\le \al_1x_1^2.$ Since $q(y)\cong_\nu q(x),$ the terms $\al_1x_1^2$ is $\nu$-dominant in the sum
\begin{equation}\label{eq:7.12}
\sum_1^n\al_ix_i^2+\sum_{i<j}\bt_{ij}x_ix_j=q(x)
\end{equation}
Since $x$ is $q$-minimal, this forces $\al_1x_1^2\in\tT$ and then $\al_1y_1^2=\al_1x_1^2.$ We conclude that $y_1=x_1,$ i.e., $y=x(1).$
\pSkip
d) Suppose that $| J |=2,$ $n\ge 2.$ We may assume that $ J =\{1,2\}.$ By \corref{cor:II.9.8}, $$\al_1y_1^2+\al_2y_2^2<\bt_{12}y_1y_2 = q(y)\in \tT.$$
It follows from $q(y)\cong_\nu q(x)$ and $y_1\le x_1,$ $y_2\le x_2 $ that $\bt_{12}x_1x_2$ is a $\nu$-dominant term in the sum \eqref{eq:7.12} and $\bt_{12}x_1x_2\cong_\nu\bt_{12}y_1 y_{2},$ $\bt_{12}x_1x_2\ge\bt_{12}y_1y_2.$

If $n>2,$ then the $q$-minimality of $x$ forces $\bt_{12}x_1x_2\in \tT,$ and we conclude from $y_1\le x_1$, $y_2\le x_2$ that $y_1=x_1,$ $y_2=x_2,$ i.e., $y=x(1,2).$

If $n=2,$ we conclude from $q(y)<q(x)$ that $e \bt_{12}y_1y_2=\bt_{12}x_1x_2,$ and then that $y_1\cong_\nu x_1,$ $y_2\cong_\nu x_2,$ whence $ex=ey.$
But $x\ne ey,$ since the vector $ey$ is not $q$-minimal. Thus either $x_1=ey_1,$ $x_2=y_2,$ or $x_1=y_1,$ $x_2=ey_2.$ We conclude that $y<x<ey.$
\end{proof}

\end{document}